\def\l@section{\@tocline{1}{0pt}{1pc}{}{}}
\def\l@subsection{\@tocline{2}{0pt}{1pc}{4.6em}{}}
\def\l@subsubsection{\@tocline{3}{0pt}{1pc}{7.6em}{}}
\renewcommand{\tocsection}[3]{%
  \indentlabel{\@ifnotempty{#2}{\makebox[2.3em][l]{%
    \ignorespaces#1 #2.\hfill}}}#3}
\renewcommand{\tocsubsection}[3]{%
  \indentlabel{\@ifnotempty{#2}{\hspace*{2.3em}\makebox[2.3em][l]{%
    \ignorespaces#1 #2.\hfill}}}#3}
\renewcommand{\tocsubsubsection}[3]{%
  \indentlabel{\@ifnotempty{#2}{\hspace*{4.6em}\makebox[3em][l]{%
    \ignorespaces#1 #2.\hfill}}}#3}
\newcommand{\leqnos}{\tagsleft@true\let\veqno\@@leqno}
\newcommand{\reqnos}{\tagsleft@false\let\veqno\@@eqno}
\def\namedlabel#1#2{\begingroup
    #2%
    \def\@currentlabel{#2}%
    \phantomsection\label{#1}\endgroup
}
\definecolor{orange}{rgb}{1,0.5,0}
\definecolor{Red}{rgb}{.795,0.015,0.017}
\definecolor{Ggreen}{rgb}{0.,0.675,0.0128}
\definecolor{Bblue}{rgb}{0.16,.32,0.91}
\newcommand{\scr}[1]{\mathscr #1}
\newcommand{\scrG}{\mathscr G}
\def\cB{\mathcal B}
\def\cD{\mathcal D}
\newcommand{\cH}{\mathcal H}
\def\cM{\mathcal M}
\def\cS{\mathcal S}
\newcommand{\edv}{\mathrel\Vert} % exactly divides $5 \edv 20$.. too much space around!!!
\renewcommand{\edv}{\mathbin\Vert} % exactly divides $5 \edv 20$; better, It matches \mid
\renewcommand{\edv}{\mathbin\|} % even better?, smaller space around, because \Vert already inserts some?. It matches \mid
\newcommand*{\B}[1]{\ifmmode\bm{#1}\else\textbf{#1}\fi}
\newcommand{\be}{\protect\B{e}}
\newcommand{\bj}{\protect\B{j}}
\newcommand{\br}{\protect\B{r}}
\newcommand{\bw}{\protect\B{w}}
\newcommand{\bx}{\protect\B{x}}
\newcommand{\by}{\protect\B{y}}
\newcommand{\bzero}{\mathbf{0}}
\def\FF{\mathbb{F}}
\def\ZZ{\mathbb{Z}}
\def\QQ{\mathbb{Q}}
\newcommand{\sdfrac}[2]{\mbox{\small$\displaystyle\frac{#1}{#2}$}}
\DeclareMathOperator{\K}{K} % with * writes the indeces under H !!!
\DeclareMathOperator{\Av}{A} % for an average
\DeclareMathOperator*{\area}{area}
\DeclareMathOperator*{\Id}{Id}
\renewcommand{\Id}{Id} % Used for matrices, which are in italics.
\DeclareMathOperator*{\id}{id}
\DeclareMathOperator*{\length}{length}
\DeclareMathOperator{\diam}{diam}
\renewcommand\xleftrightarrow[2][]{\ext@arrow 0099{\longleftrightarrowfill@}{#1}{#2}}
\def\longleftrightarrowfill@{\arrowfill@\leftarrow\relbar\rightarrow}
\renewcommand{\pmod}[1]{\left( \mathrm{ mod\;}#1\right)}
\newcommand{\abs}[1]{\left| #1 \right|}
\theoremstyle{plain}
\newtheorem{theorem}{Theorem}
\newtheorem{lemma}{Lemma}[section]
\newtheorem{proposition}{Proposition}[section]
\theoremstyle{remark}
\newtheorem{remark}{Remark}[section]
\newtheorem*{remark*}{Remark}
\theoremstyle{definition}
\renewcommand*{\backref}[1]{}
\renewcommand*{\backrefalt}[4]{%
  \ifcase #1 %
No citations.% use \relax if you do not want the "No citations" message
  \or
(page #2).%
  \else
(pages #2).%
  \fi%
}
\pgfplotsset{compat=1.8}
\renewcommand*\env@matrix[1][\arraystretch]{%
  \edef\arraystretch{#1}%
  \hskip -\arraycolsep
  \let\@ifnextchar\new@ifnextchar
  \array{*\c@MaxMatrixCols c}}
\newcommand{\List}[1]{\llbracket #1 \rrbracket}
\begin{document}
\title[Twisted aughts of alternating involutions]
{Twisted aughts of alternating involutions}
% Twisted aughts with alternating involutions
%  orbits, hoops, loops, rings, cycles, zips, nils, ohs

\author[R. N. Bhat]{Raghavendra N. Bhat}
\address[Raghavendra N. Bhat]{Department of Mathematics, University of Illinois at Urbana-Champaign, 1409 West Green 
Street, Urbana, IL 61801, USA}
\email{rnbhat2@illinois.edu}

\author[C. Cobeli]{Cristian Cobeli}
% \author[Cristian Cobeli]{Cristian Cobeli\textsuperscript{*}}
\address[Cristian Cobeli]{``Simion Stoilow'' Institute of Mathematics of the Romanian Academy,~21 Calea Griviței Street, P. O. Box 1-764, Bucharest 014700, Romania}
\email{cristian.cobeli@imar.ro}
% \thanks{\textsuperscript{*}Corresponding author: Crist\input{ZINV_matrices_2}ian Cobeli: \texttt{cristian.cobeli@gmail.com}}

\author[S. Iwai]{Shuta Iwai}
\address[Shuta Iwai]{Department of Mathematics, University of Illinois at Urbana-Champaign, 1409 West Green 
Street, Urbana, IL 61801, USA}
\email{siwai2@illinois.edu}

\author[Z. Ye]{Zimeng Ye}
\address[Zimeng Ye]{Department of Mathematics, University of Illinois at Urbana-Champaign, 1409 West Green 
Street, Urbana, IL 61801, USA}
\email{zimengy3@illinois.edu}

\author[A. Zaharescu]{Alexandru Zaharescu}
\address[Alexandru Zaharescu]{Department of Mathematics, University of Illinois at Urbana-Champaign, 1409 West Green 
Street, Urbana, IL 61801, USA,
% .}
%
% \address[Alexandru Zaharescu]{
and 
``Simion Stoilow'' Institute of Mathematics of the Romanian Academy,~21 
Calea Griviței 
Street, P. O. Box 1-764, Bucharest 014700, Romania}
\email{zaharesc@illinois.edu}

\subjclass[2020]{Primary 11B37; Secondary 51F15, 11B50
}

\thanks{Key words and phrases: involutions, twisted orbits, involutive matrices, orbit diameter, Coxeter groups}

\begin{abstract}
Let $\cM(n)$ be the subgroup of $GL(n,\ZZ)$ generated 
by the particular involutions 
that are identical to the identity, except for a single line where 
% $\pm 1$ alternate.
$-1$ and $+1$ alternate.
 We study the properties of $\cM(n)$, and then find several notable 
characteristics of the unions of trajectories obtained by iteratively applying a 
fixed sequence of such involutions to elements from $\ZZ^n$.
\end{abstract}
%%% --------------------------------------------------
\maketitle
{
  \hypersetup{linkcolor=blue}
  % \small
  \tableofcontents
}
%%% --------------------------------------------------

\section{Introduction}
\bigskip

In the quest for distinguished patterns in tiling problems or partitioning the space with integers, certain involutions arise and prove to have a directive role.
Depending on their destined play, they can function in a lower dimension, as in~\cite{BCZ2024}, where they were used to indicate location and content 
by changes in direction and size, or by keeping the dimension as in~\cite{CZ2024, CRZ2025}, where they also take on the property of being invariant under translations.

Within the same class, the type of operators we are considering are elements of $GL(n,\ZZ)$,
which act on elements of $\ZZ^n$ 
by replacing one component with an alternating sum of all components to which an additive constant is added while the other components remain unchanged.
Nonetheless, that constant added to the alternating sum
entangles a straightforward generalization of these transformations
to higher dimensions.
In this case, by selecting a feasible option, we have opted to put the constant aside, 
along with the requirement of invariance under translations, 
keeping only the one of being involutions.
Moreover, in the most basic considered version, we can replace $\ZZ$ with $\FF_2$,
but to emphasize the reflective property, we will keep the entries in $\{-1,0,1\}$ instead.

Let $n$ be a positive integer. 
For any integer 
$j$, with $1\le j\le n$,
we let $K_j = K_j(n)$ denote the  modified identity matrix such that the $j$-th line 
starts with $(-1)^j$ and then alternates~$\pm 1$'s.
To be precise, let 
$\be_j=\be_j(n)$ be the $j$-th standard basis 
(column) vector
and let
$\br_j=\br_j(n):=\big((-1)^j,(-1)^{j+1},\dots, (-1)^{j+n-1}\big)$.
(We will omit the reference to dimension $n$ as the dimension will be clear from the context.)
Thus
\begin{align}\label{eqDefKj}
  K_j:= \Id - \be_j \be_j^T 
  + \be_j\br_j.
\end{align}
We also consider the corresponding operators
$\K_j : \ZZ^n \to\ZZ^n$ that transform any vector 
$\bx=(x_1,\dots,x_n)\in\ZZ^n$ into the product $(K_j\bx^T)^T$.
(Further on, in this context, we will drop the indication 
of transpositions by writing simply $\K_j(\bx)$ or just $K_j\bx$.)
Iteratively applying operators $\K_j$, with indices selected from 
a given sequence of positive integers, 
creates a subset of $\ZZ^n$, called \textit{trajectory} or \textit{orbit}, when it is closed. 
These trajectories vary based on the chosen initial point, called 
trajectory \textit{generator}.
In the case of non-trivial sequences,
excluding generators belonging to certain special 
locations that are part of sets with zero measure in the 
working environment (such as the origin, specific axes, 
planes, or hyperplanes), the trajectories share the same 
class of shape indicated by the same underlying topology.
For any fixed sequence of indices of the operators, 
the union of the associated trajectories forms a set 
partition of $\ZZ^n$.
The mentioned additive constant, which is~$0$ in the case 
of operators~$\K_j$, causes the trajectories generated by 
their iterated application to be bounded. 
This is in contrast to, for instance, the trajectories 
that were unbounded parallel parabolas
or intricately looking circles that partition the plane 
$\ZZ^2$ in~\cite{CZ2024, CRZ2025}.

\begin{figure}[thb]
\centering
\hfill
\includegraphics[width=0.315\textwidth,angle=-90]{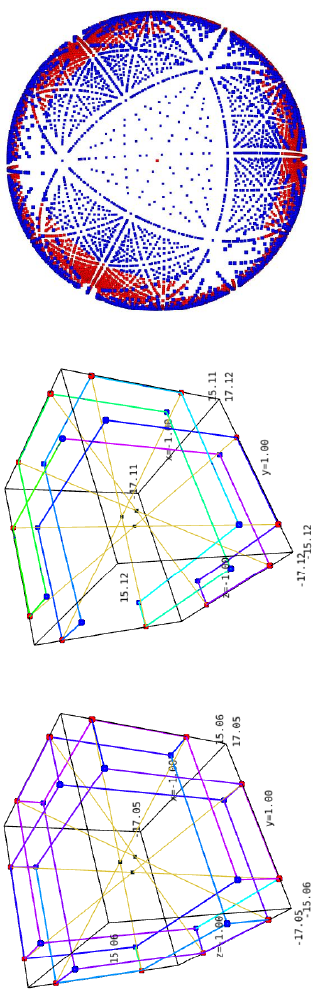}
\hfill\mbox{}
% \vspace*{-3mm}
\caption{
Let $\bw = (10,8,15)$ and
$\K = 
\left(\K_3(\K_2\K_1)^2\K_2
\K_3(\K_1\K_2)^2\K_1
\right)^2
$.
% $\K = 
% \K_3(\K_2\K_1)^2\K_2
% \K_3(\K_1\K_2)^2\K_1
% \K_3(\K_2\K_1)^2\K_2
% \K_3(\K_1\K_2)^2\K_1
% $
The image on the left-side shows 
the $24$ nodes and $36$ edges of the graph obtained by starting from~$\bw$ and applying recursively any of the operators $\K_1,\K_2,\K_3$. The Euclidean diameters of the graph and their midpoints are also included. 
The Eulerian-Hamiltonian cycle, which connects the nodes of the orbit $o(\bw)$ in the order given by their appearance while applying the operators in $\K$, is shown in the middle image.
In the right-hand image, one can see the projections onto the unit sphere of all the nodes of the orbits generated likewise by~$\K$ while starting from every $\bx \in [0,10]^3$.
(In the electronic version, the diametrically opposite nodes of the orbits they belong to are in red and the others are in blue.)
        % LL = [3,  2,1,2,1,2,  3,   1,2,1,2,1,    3,  2,1,2,1,2, 3,   1,2,1,2,1] # Full length 24
}
\label{Figure3dorbits}
\end{figure}

Applying the $K_j$ operators iteratively on points from 
$\ZZ^n$, the ubiquitous geometric shape that arises in all 
pairs of dimensions is that of a \textit{twisted aught}, resembling that of tilted figure~$8$, with lines drawn parallel to the coordinate axes.
(See their algebraic expressions and graphic representations in Sections~\ref{SectionAughts} and~\ref{SectionAughtsGraphics}.)
In Figure~\ref{Figure3dorbits}, one can see the graph with $24$ nodes and $36$ 
edges that contains any possible trajectory starting from $(10, 8, 15)$.
The nodes are positioned in $12$ levels, with $4$ 
parallel to each of the three planes 
$x_j=0$ for $j = 1,2,3$, respectively.
In the orbit shown in the center of Figure~\ref{Figure3dorbits}, 
even though there are $6$ nodes for each aught on each level,
some edges are left untraced when 
jumping from one level to another, thus completing a Hamiltonian path, which is also an Eulerian cycle.

There are orbits that also have notable arithmetic properties
such as those related to squares and primes.
% Pythagorean triples or twin primes.
Thus, an orbit that
passes through a point with coordinates $(a^2,c^2)$, 
where $a^2+b^2=c^2$ for some integers $a,b,c$,
contains only nodes whose 
coordinates are squares or their negatives;
and an orbit that passes through $(p,p+2)$, where both $p$ and~$p+2$ are 
primes, has only prime numbers or their opposites in all components of the 
other nodes.
% [(16, 25), (9, 25), (9, -16), (-25, -16), (-25, -9), (16, -9)]
% [(a^2, c^2), (-a^2 + c^2, c^2), (-a^2 + c^2, -a^2), (-c^2, -a^2), (-c^2, a^2 - c^2), (a^2, a^2 - c^2)]

% [(11, 13), (2, 13), (2, -11), (-13, -11), (-13, -2), (11, -2)]
% [(p, p + 2), (2, p + 2), (2, -p), (-p - 2, -p), (-p - 2, -2), (p, -2)]

A wide range of topics regarding problems related to groups generated by involutions and matters on the associated Hamiltonian cycles have been studied in many works;
see for example~\cite{Lee2021, SC1994, BV2010, RST2017, KM2008, CSL2015, GY1996}, the surveys~\cite{PR2009, LPRTW2019, KM2009}, and the references therein.

\subsection{Main results}
Our work has two main parts. 
In Sections~\ref{SectionLemmas}--\ref{SectionIsomorphismMS}, starting from the observation that 
all $K_j$ with $j=1,2,\dots,n$ are involutions, 
we follow step by step on their properties and
arrive at a comprehensive description of the group they 
generate (see Theorem~\ref{LemmaMsihMultiplication}). 
Lately, we find that, as a Coxeter group, 
$\langle K_1,K_2,\dots,K_n\rangle$ is isomorphic 
with the symmetric group~$S_{n+1}$ (see Section~\ref{SectionIsomorphismMS}).
In the second part, we turn our attention specifically to the characteristic 
aspects of sets formed by unions of orbits, particularly in the two-dimensional 
case.
This proves to be essential for understanding the general phenomenon, 
which will be the topic of a subsequent work.

Starting from an arbitrary $\bx\in\ZZ^2$ and applying the $K_j$ operators 
iteratively, we obtain a cyclic trajectory $o(\bx)$ 
with $6$ points, 
except for a few degenerate situations. 
In each orbit $o(\bx)$, let's color in red the points farthest from each other, 
that is, those that are diametrically opposed in Euclidean distance.
We color the points that are not diametrically opposed in blue.
By letting $\bx$ run in increasingly larger sets, we obtain different shapes 
that expand and eventually lead to a complete coloring of $\ZZ^2$ (see Figure~\ref{FigureEuclideanDiametersProjections}).
The following theorem shows the common property of all orbits to be 
specifically colored based on the positioning of their nodes in the plane.

%%%%%%%%%%%%%%%%%%%%%%%%%% %%%%%%%%%%%%%%%%%%%%%%%%%%%%%%
\begin{theorem}\label{TheoremProjection2D}
  In the Euclidean distance, the diametrically opposite nodes of 
the bounding boxes enclosing the two-dimensional  
orbits project onto the unit circle in two opposite arcs of length 
\mbox{$\arctan\left(2\right) - \arctan\left(1/2\right)$}
each.
Then, $(\arctan\left(2\right) - \arctan\left(1/2\right))/\pi 
\approx 0.204833$ is,
in this sense, 
the limit probability that a lattice point belongs to the Euclidean diameter of the orbit to which it belongs.
\end{theorem}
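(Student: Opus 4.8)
The plan is to make the two-dimensional dynamics completely explicit, read off the bounding box and the diametral pair, and then reduce the probability claim to counting lattice points in two cones. For $n=2$ one has $\br_1=(-1,1)$ and $\br_2=(1,-1)$, so $\K_1(x_1,x_2)=(x_2-x_1,x_2)$ and $\K_2(x_1,x_2)=(x_1,x_1-x_2)$. Setting $\alpha=x_1$, $\beta=x_2-x_1$, $\gamma=-x_2$, so that $\alpha+\beta+\gamma=0$, a short computation — or the identification $\langle K_1,K_2\rangle\cong S_3$ from Section~\ref{SectionIsomorphismMS} acting on $\{\alpha,\beta,\gamma\}$ — shows that $o(\bx)=\{(\mu,-\nu):\mu,\nu\in\{\alpha,\beta,\gamma\},\ \mu\neq\nu\}$, a set of six points unless two of $\alpha,\beta,\gamma$ coincide. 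The exceptional $\bx$ lie on finitely many lines through the origin, a set of density zero that will be harmless throughout, so I may assume $\alpha<\beta<\gamma$, whence $\alpha<0<\gamma$.

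Next, the first coordinates occurring in $o(\bx)$ are exactly $\alpha,\beta,\gamma$ and the second coordinates exactly $-\alpha,-\beta,-\gamma$, so the bounding box is the \emph{square} $[\alpha,\gamma]\times[-\gamma,-\alpha]$; of its four corners only $(\alpha,-\gamma)$ and $(\gamma,-\alpha)$ are orbit nodes (the other two would force $\mu=\nu$), and they are diagonally opposite. Since the squared distance between nodes $(\mu_1,-\nu_1)$ and $(\mu_2,-\nu_2)$ equals $(\mu_1-\mu_2)^2+(\nu_1-\nu_2)^2\le 2(\gamma-\alpha)^2$, with equality only when $\{\mu_1,\mu_2\}=\{\nu_1,\nu_2\}=\{\alpha,\gamma\}$, this pair is the \emph{unique} diametral pair of the orbit, of diameter $\sqrt2\,(\gamma-\alpha)$; this is why ``diametrically opposite nodes of the bounding box'' and ``Euclidean diameter of the orbit'' describe the same two points. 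From $\beta=-\alpha-\gamma$ the chain $\alpha<\beta<\gamma$ is equivalent to $\tfrac12<\gamma/(-\alpha)<2$; projecting to the unit circle, $(\gamma,-\alpha)$ (open first quadrant) has polar angle $\arctan(|\alpha|/\gamma)$ and $(\alpha,-\gamma)$ (open third quadrant) has polar angle $\pi+\arctan(\gamma/|\alpha|)$, and as the ratio runs over $(\tfrac12,2)$ — each rational value being attained for suitable integers $\alpha,\gamma$ — these angles densely fill two antipodal arcs, each of length $\arctan2-\arctan\tfrac12\,(=\arctan\tfrac34)$. This is the first assertion.

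For the probability, a lattice point $(u,v)$ is a diametral node of its own orbit precisely when, ordering its orbit triple $\{u,v-u,-v\}$ increasingly as $\alpha<\beta<\gamma$, one has $(u,v)=(\alpha,-\gamma)$ or $(u,v)=(\gamma,-\alpha)$; unwinding the inequalities, this happens exactly when $(u,v)$ lies in one of the two opposite open cones $C_{+}=\{(u,v):u,v>0,\ \tfrac12<v/u<2\}$, $C_{-}=-C_{+}$, and conversely every lattice point of $C_+\cup C_-$ off the two boundary rays is such a node. Thus the set of diametral nodes is $(C_+\cup C_-)\cap\ZZ^2$ up to a density-zero set, and counting in the disk of radius $N$ — a sector of angular width $\theta$ contains $\tfrac12\theta N^2+O(N)$ lattice points, while the disk contains $\pi N^2+O(N)$ — gives natural density $\bigl(\arctan2-\arctan\tfrac12\bigr)/\pi\approx0.204833$, the claimed limit probability.

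The algebra of the first two paragraphs is routine; the genuinely delicate point is the last step. One must pin down the meaning of ``limit probability'': equality with the normalized arc length holds for dilates of a rotationally invariant counting window (disks), and this should be stated, since for, say, a square window the density would instead depend on the shape of the window. One then invokes the standard lattice-point estimate for circular sectors, controlling the $O(N)$ boundary term. It is also worth stressing that the value is \emph{not} the naive $2/6=\tfrac13$ suggested by ``two diametral nodes among six'': because orbits straddling the boundary of the window are only partially counted, the relevant object is the lattice-point density of the two cones $C_\pm$, which is genuinely $(\arctan2-\arctan\tfrac12)/\pi$.
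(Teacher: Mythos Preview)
Your proof is correct and takes a genuinely different route from the paper. The paper's argument in Section~\ref{SectionProjections2D} leans on the triangle decomposition of the hexagon $\cH$ built up in Section~\ref{SectionRepresentativeOrbits}: it tracks how $\K_1,\K_2$ shuffle the twelve triangles $\triangle A\alpha O$, $\triangle A\eta O,\dots$ via sequences~\eqref{eqSegmentsPathTOP}--\eqref{eqTrianglesPathRIGHT}, and then checks case by case which of those triangles carry the Euclidean-diametral nodes, arriving at the region between $y=x/2$ and $y=2x$. Your symmetric parametrization $(\alpha,\beta,\gamma)=(x_1,\,x_2-x_1,\,-x_2)$ with $\alpha+\beta+\gamma=0$ bypasses that machinery: it makes the $S_3$-action on the orbit transparent, turns the identification of the diametral pair into the single ordering $\alpha<\beta<\gamma$, and reduces the cone condition to $\tfrac12<\gamma/(-\alpha)<2$ in one line. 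The paper's decomposition is reused for the other averages in Section~\ref{SectionArithmeticOn2Dorbits}, so it is not wasted work there; your approach is more self-contained and also handles the probability claim more carefully. Indeed, the paper records several window-dependent proportions ($\tfrac12$ on $[0,M]^2$, $\tfrac13$ on $\cH$, $\tfrac14$ on $[-R,R]^2$) and leaves the value $(\arctan 2-\arctan\tfrac12)/\pi$ as the arc-length ratio ``in this sense''; you make explicit that the disk window is what converts the angular density into an honest lattice-point density equal to that number.
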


%%%%%%%%%%%%%%%%%%%%%%%%%%%%%%%%%%%%%%%%%%%%%%%%%%%%%
\begin{figure}[tb]
\centering
\hfill
\includegraphics[width=0.32\textwidth]{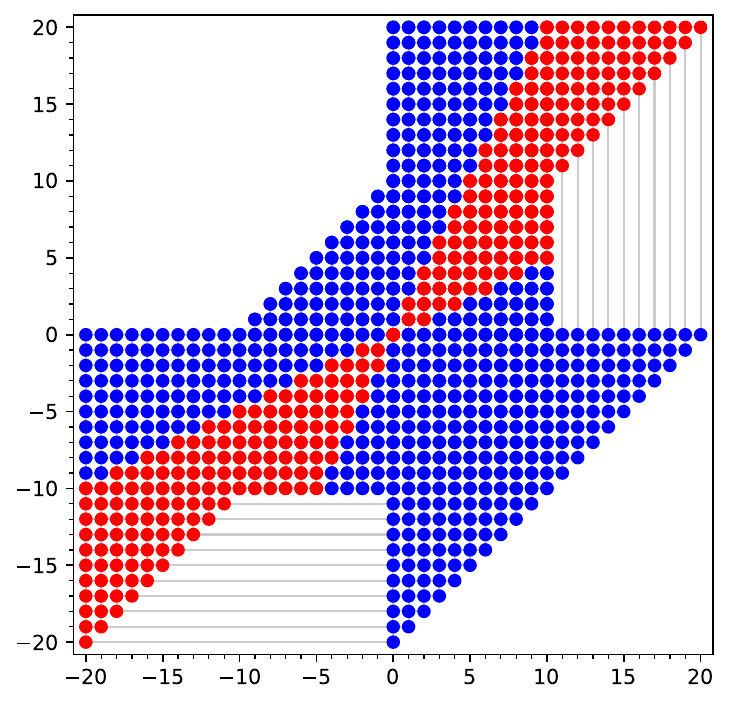}
\includegraphics[width=0.32\textwidth]{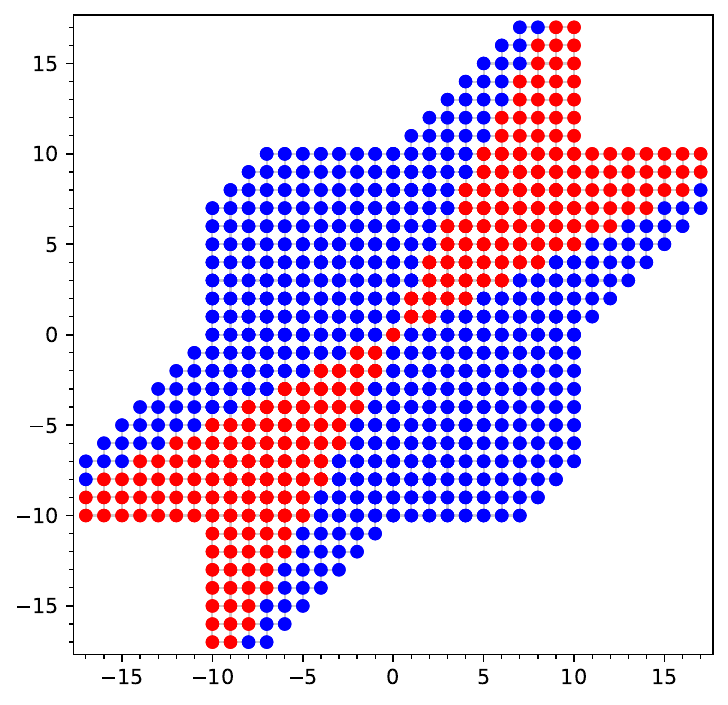}
\includegraphics[width=0.32\textwidth]{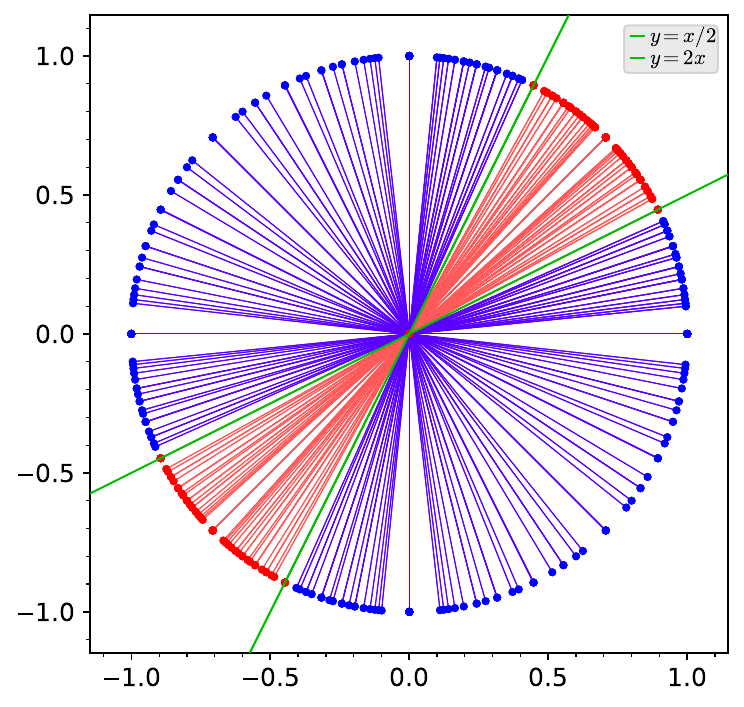}
\hfill\mbox{}
% \vspace*{-3mm}
\caption{The orbits generated by $\bx\in[0,M]\times[0,2M]$ (left)
and the orbits generated by $\bx\in[-M,M]^2$ with $d_E(O,\bx)\le M$ (middle).
Points are colored in red if they are part of the Euclidean diameter of 
the orbit they belong to and in blue otherwise. 
The same colors are used to draw the projections of the orbits generated by
$\bx\in [0,M]^2$ onto the unit circle (right).
(In all three representations, $M=10$.)
}
\label{FigureEuclideanDiametersProjections}
\end{figure}

%%%%%%%%%%%%%%%%%%%%%%%%%%%%%%%%%%%%%%%%%%%%%%%%
We will see in Section~\ref{SectionSqrt5} 
that the length of the orbits generated by points 
$\bx\in\ZZ^2$ at a Euclidean distance of at most~$R$ from the origin 
ranges between $0$ and $4\sqrt{5}R\approx 8.94427R$, 
the latter being attained for lattice points close to
$(R\cos(\phi), -R\sin(\phi))$, 
where $\phi=\arctan 2$.
% phi = arctan(2) = arctan(2) = 1.10714871779409
% cos(phi) = 1/5*sqrt(5) = 0.447213595499958
% sin(phi) = 2/5*sqrt(5) = 0.894427190999916
% coeficLargestPerimeter = 4(2sin(phi)+cos(phi)) = 4*sqrt(5) = 8.94427190999916
The next theorem proves that the average of these perimeters is
about $4.99649R$.
% $\frac{8}{3\pi}\big(\sqrt{2} + 2\sqrt{5}\big)R\approx 4.99649R$.
%%%%%%%%%%%%%%%%%%%%%%%%%%%%%%%%%%%%%%%%%%%%%%%%
\begin{theorem}\label{TheoremPerimeterOrbitsDisk}
If $R$ is a positive integer, then the average length of an orbit
$o(\bx)$, where $\bx$ runs over all lattice points with $\Vert\bx\Vert_E\le R$ is 
$ \frac{8}{3\pi}\big(\sqrt{2} + 2\sqrt{5}\big)R +O(1)$.
\end{theorem}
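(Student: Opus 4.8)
The plan is to reduce the statement to a two-dimensional lattice-point count once the hexagonal shape of a planar orbit is written down explicitly. First I would use the description of $\langle K_1,K_2\rangle$ from Sections~\ref{SectionLemmas}--\ref{SectionIsomorphismMS} — in particular Theorem~\ref{LemmaMsihMultiplication} together with the identification $\langle K_1,K_2\rangle\cong S_3$ — to note that the group has order $6$, so that for $\bx=(x_1,x_2)\in\ZZ^2$ the orbit $o(\bx)$ consists of the six vectors $g\bx$, $g\in S_3$, traversed once by applying $\K_1$ and $\K_2$ alternately. Listing these in cyclic order,
\begin{align*}
(x_1,x_2),\ (x_2-x_1,\,x_2),\ (x_2-x_1,\,-x_1),\ (-x_2,\,-x_1),\ (-x_2,\,x_1-x_2),\ (x_1,\,x_1-x_2),
\end{align*}
one sees that consecutive vertices differ in exactly one coordinate, so $o(\bx)$ is a closed polygon with sides parallel to the axes (the twisted aught); its six side lengths turn out to be $\abs{2x_1-x_2}$, $\abs{x_1+x_2}$, $\abs{x_1-2x_2}$, each occurring twice, whence
\begin{align*}
\length o(\bx)\;=\;2\bigl(\,\abs{2x_1-x_2}+\abs{x_1+x_2}+\abs{x_1-2x_2}\,\bigr).
\end{align*}
This identity persists in the degenerate cases (some of the three linear forms may vanish, merely collapsing an edge of the walk), and in any event the degenerate generators lie on $O(1)$ lines through the origin, hence affect the sums below by only $O(R^2)$.

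Next I would estimate $S(R):=\sum_{\Vert\bx\Vert_E\le R}\length o(\bx)$. By the previous paragraph this equals $2\sum_{\Vert\bx\Vert_E\le R}\bigl(\abs{2x_1-x_2}+\abs{x_1+x_2}+\abs{x_1-2x_2}\bigr)$, so it is enough to evaluate $\sum_{\Vert\bx\Vert_E\le R}\abs{\ell(\bx)}$ for a linear form $\ell(\bx)=c_1x_1+c_2x_2$ with $c_1,c_2=O(1)$. On the disk $D_R=\{\by\in\RR^2:\Vert\by\Vert_E\le R\}$ the function $\abs{\ell}$ is Lipschitz with an $O(1)$ constant and of size $O(R)$; decomposing $D_R$ into unit cells — $O(R^2)$ cells contained in $D_R$, each contributing a discrepancy $O(1)$ between the value at a corner and the integral over the cell, and $O(R)$ cells meeting $\partial D_R$, each contributing $O(R)$ — gives
\begin{align*}
\sum_{\Vert\bx\Vert_E\le R}\abs{\ell(\bx)}\;=\;\iint_{D_R}\abs{\ell(\by)}\,\dd\by\;+\;O(R^2).
\end{align*}
Rotating $D_R$ so that $\ell$ becomes a multiple of a coordinate and passing to polar coordinates, $\iint_{D_R}\abs{\ell(\by)}\,\dd\by=\tfrac{4}{3}R^3\sqrt{c_1^2+c_2^2}$. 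Applying this with $(c_1,c_2)=(2,-1),(1,1),(1,-2)$, whose norms are $\sqrt5$, $\sqrt2$, $\sqrt5$, yields
\begin{align*}
S(R)\;=\;2\cdot\tfrac{4}{3}R^3\bigl(\sqrt5+\sqrt2+\sqrt5\bigr)+O(R^2)\;=\;\tfrac{8}{3}\bigl(\sqrt2+2\sqrt5\bigr)R^3+O(R^2).
\end{align*}

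Finally I would divide by the number of lattice points in $D_R$, which is $\pi R^2+O(R)$ by the elementary bound, to obtain the average $\tfrac{8}{3\pi}\bigl(\sqrt2+2\sqrt5\bigr)R+O(1)$, as claimed. The one step that needs genuine care is the sum-to-integral comparison: the error must be kept uniformly $O(R^2)$, so that after dividing by $\pi R^2$ it is absorbed into the $O(1)$ term; since the three integrands are piecewise linear with bounded gradient this is routine, and once the hexagon length formula for $\length o(\bx)$ is in place the rest is bookkeeping.
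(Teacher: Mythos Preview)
Your argument is correct and reaches the same constant, but the route differs from the paper's. The paper proves Theorem~\ref{TheoremPerimeterOrbitsDisk} in Section~\ref{SectionSqrt5} by splitting the disk into twelve circular sectors (reduced to three by symmetry) determined by the lines $y=x$, $y=2x$, $y=x/2$, and the axes, on each of which the three absolute values in the perimeter formula resolve to fixed signs; it then integrates the resulting signed linear forms in polar coordinates over each sector and recombines. You instead keep the three forms $|2x_1-x_2|$, $|x_1+x_2|$, $|x_1-2x_2|$ separate and exploit the rotational invariance of $D_R$ to evaluate each $\iint_{D_R}|\ell|$ at once as $\tfrac{4}{3}R^3\sqrt{c_1^2+c_2^2}$, so that the constants $\sqrt{2}$ and $\sqrt{5}$ appear directly as the Euclidean norms of the linear forms. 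Your method is shorter and sidesteps the sector bookkeeping entirely; the paper's decomposition, on the other hand, ties into its earlier analysis of orbit representatives (Section~\ref{SectionRepresentativeOrbits}) and would adapt more readily to domains without rotational symmetry, such as the square $[0,M]^2$ handled in~\eqref{eqAverageSizebbPerimeter}. Both justifications of the $O(R^2)$ error in passing from the lattice sum to the integral are at the same level of rigor.
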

% Note that $\frac{8}{3\pi}\big(\sqrt{2} + 2\sqrt{5}\big)\approx 4.99649$.

The modular representation of orbit perimeters 
creates various patterns 
depending on the modulus $d$.
Some of these are included in Figure~\ref{FigureColoredOrbits}, and 
their number is estimated in the following theorem.
%%%%%%%%%%%%%%%%%%%%%%%%%%%%%%%%%%%%%%%%%%%%%%%%
\begin{theorem}\label{Theorem_rmodd}
Assume $d\ge 2$, 
$r$ with $0\le r\le d-1$, and $M\ge 1$ are integers.
Let $N_{r,d}(M)$ be the tally of the orbits $o(\bx)$ with $\bx\in[0,M]^2\cap\ZZ^2$
and length congruent to 
$r\pmod d$.
Then 
\begin{align*}
  N_{r,d}(M) = 
  \begin{cases}
     0, & \text{ if } 2\edv d \text{ and } 2\not\mid r, \text{ or } 4\mid d \text{ and } 4\not\mid r;\\[2pt]
     \frac{1}{d}M^2 + O(M), & \text{ if } 2\edv d \text{ and } 2\mid r;\\[2pt]
     \frac{2}{d}M^2 + O(M), & \text{ if }  4 \mid d \text{ and } 4\mid r;\\[2pt]
     \frac{1}{2d}M^2 + O(M), & \text{ if }  2\not\mid d.
  \end{cases}
\end{align*}
\end{theorem}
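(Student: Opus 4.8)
The plan is to reduce the count $N_{r,d}(M)$ to a lattice-point count over residue classes determined by the perimeter function, and then evaluate that count by a combination of geometry of numbers (to get the main term) and an error estimate for points on a finite union of lines and arcs (to get the $O(M)$). First I would recall, from the (forthcoming) Section~\ref{SectionSqrt5} material on the two-dimensional orbits, the explicit formula for the length $\length\big(o(\bx)\big)$ as a function of $\bx=(x_1,x_2)$. For a generic $\bx$ the orbit has six nodes, and its perimeter is a sum of absolute values of the form $\bigl|x_1+x_2\bigr|$, $\bigl|x_1\bigr|$, $\bigl|x_2\bigr|$ (and a few more such linear forms coming from the action of the $\K_j$'s), each possibly scaled by $\sqrt{2}$ for the diagonal edges. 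The crucial structural point — which I expect is already recorded earlier — is that on the open cone where all these linear forms keep a fixed sign, $\length\big(o(\bx)\big)$ is itself a fixed integer linear form $L(\bx)=\alpha x_1+\beta x_2$ in $x_1,x_2$ (the $\sqrt 2$-edges cancel out in pairs for a closed twisted aught, so the perimeter is rational, in fact an integer). This is what makes ``length $\equiv r \pmod d$'' a \emph{congruence condition on $\bx$}, hence amenable to lattice counting.

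Next I would partition $[0,M]^2\cap\ZZ^2$ according to which sign-chamber $\bx$ lies in; there are finitely many chambers, cut out by the lines $x_1=0$, $x_2=0$, $x_1+x_2=0$, etc., and restricting to $[0,M]^2$ kills all but $O(1)$ of them while forcing definite signs on most of the forms. On each relevant chamber, $\length\big(o(\bx)\big)=L(\bx)$ for an explicit $L$ with integer coefficients, so the condition becomes $L(\bx)\equiv r\pmod d$, i.e. $\bx$ lies in a union of $\le d$ parallel lattice lines inside the triangle/quadrilateral $[0,M]^2\cap(\text{chamber})$. Counting lattice points of a fixed two-dimensional region that lie on an arithmetic progression of lines of slope determined by $\gcd$-data gives $\frac{\rho}{d}\cdot\area + O(M)$, where $\rho$ is the number of residues $r'$ mod $d$ that are actually hit by $L$ on $\ZZ^2$ — this $\rho$ is $\gcd(\alpha,\beta,d)$-type data. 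Summing the main terms over all chambers of $[0,M]^2$ reassembles $\area=M^2$, yielding $\frac{c_{r,d}}{d}M^2+O(M)$ with $c_{r,d}\in\{0,1,2,\tfrac12\}$ exactly as in the statement.

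The heart of the matter — and the main obstacle — is the exact determination of which residues $r$ are attainable and with what multiplicity, i.e.\ proving the $2\edv d$/$4\mid d$ dichotomy. This is a congruence computation on the linear form $L$: one must show that the integer perimeter $L(\bx)$ is always even (forcing $N_{r,d}(M)=0$ when $2\mid d$ but $2\nmid r$), and moreover divisible by $4$ unless a certain diagonal contribution is ``odd'', which pins down the $4\mid d$ case; conversely one must exhibit, for each admissible $r$, enough lattice points in $[0,M]^2$ hitting that class to get the stated density (this is where the factor $2$ versus $\tfrac12$ comes from — some classes are hit on \emph{two} chambers' worth of lines, others on a sparser sublattice). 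Concretely I would: (i) write $L(\bx)$ explicitly on a model chamber such as $\{0\le x_1\le x_2\}$; (ii) reduce $L(\bx)\bmod 4$ and read off the image as $\bx$ varies; (iii) check that the odd/even behavior is chamber-independent after the restriction to $[0,M]^2$; and (iv) bound the overlaps between chambers (the lines $x_i=0$, $x_1+x_2=\text{const}$, and the relevant arcs) by $O(M)$ to absorb the boundary into the error term. Once the mod-$4$ image of $L$ is known, the four cases of the theorem fall out by matching $\rho=\gcd$-count against $d$, and the error term is uniform in $r$ because the number of chambers and bounding lines is absolutely bounded.
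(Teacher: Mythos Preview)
Your overall strategy---reduce to a congruence on a piecewise linear perimeter function and count lattice points chamber by chamber---is the right shape, but there are two genuine gaps.

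First, the perimeter formula you sketch is not the right one. There are no diagonal edges and no $\sqrt{2}$ factors: the orbit $o(\bx)$ is a closed taxicab path made entirely of horizontal and vertical segments, and its semi-perimeter is exactly
\[
p(\bx)=\lvert 2x_1-x_2\rvert+\lvert x_1+x_2\rvert+\lvert 2x_2-x_1\rvert
\]
(this is equation~\eqref{eqSemiperemeter2D}). In particular $2p(\bx)\equiv 0\pmod{4}$ for \emph{every} $\bx$, which immediately gives the ``$=0$'' cases; there is no delicate mod-$4$ analysis of the type you describe.

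Second, and more seriously, you are counting lattice points in $[0,M]^2$, whereas $N_{r,d}(M)$ counts \emph{orbits}. These are not the same: a generic orbit has exactly two of its six nodes in $[0,M]^2$ (one in the wedge $x_1/2\le x_2\le 2x_1$ and one outside it), so your chamber sum over all of $[0,M]^2$ overcounts by a factor of~$2$. Concretely, for odd $d$ your method would produce $M^2/d+O(M)$, not the claimed $M^2/(2d)+O(M)$. To land on the correct constants you must first pass to a set of representatives---one lattice point per orbit. The paper does exactly this: by the analysis in Section~\ref{SectionRepresentativeOrbits}, the wedge $\{x_1/2\le x_2\le 2x_1\}\cap[0,M]^2$ (area $M^2/2$) meets each relevant orbit once, and on that wedge the perimeter collapses to the single linear form $4(x_1+x_2)$. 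From there the three cases are an elementary count of solutions to $4(x_1+x_2)\equiv r\pmod d$ with $x_1$ in an interval of length $m/2$, summed over $m\le M$. Your chamber decomposition is salvageable, but only after you identify and restrict to such a fundamental domain; without that step the coefficients cannot come out right.
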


%%%%%%%%%%%%%%%%%%%%%%%%%%%%%%%%%%%%%%%%%%%%%%%%%%%%%%
\begin{figure}[htb]
\centering
\hfill
\includegraphics[width=0.32\textwidth]{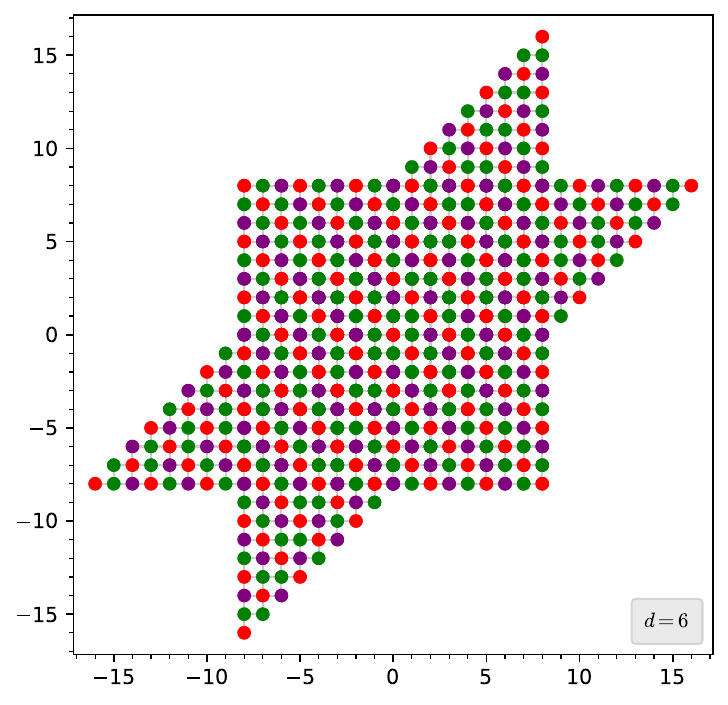}
\includegraphics[width=0.32\textwidth]{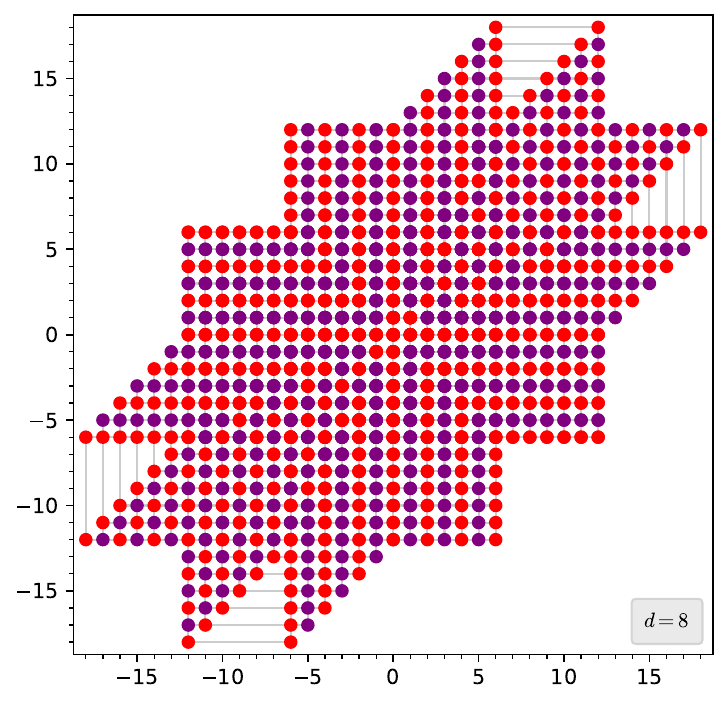}
\includegraphics[width=0.32\textwidth]{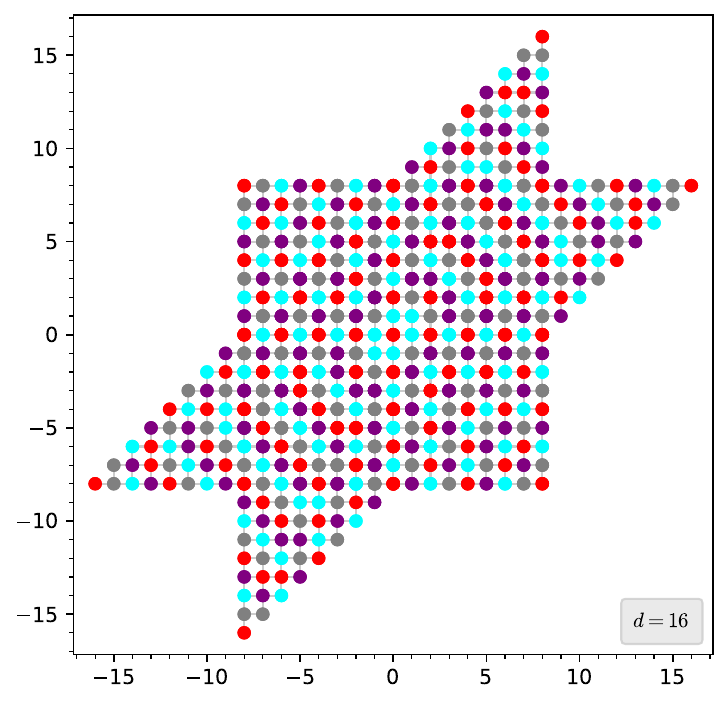}
\hfill\mbox{}\\
\hfill
\includegraphics[width=0.32\textwidth]{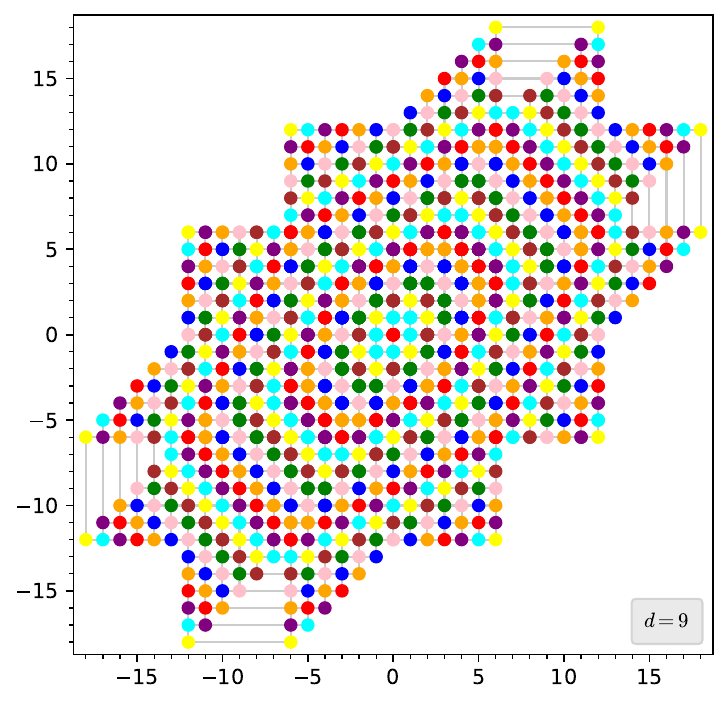}
\includegraphics[width=0.32\textwidth]{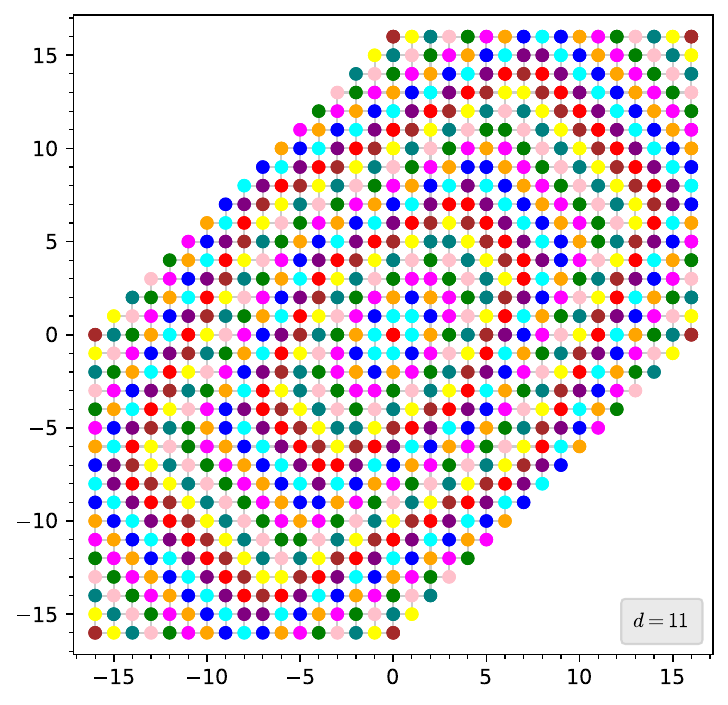}
\includegraphics[width=0.32\textwidth]{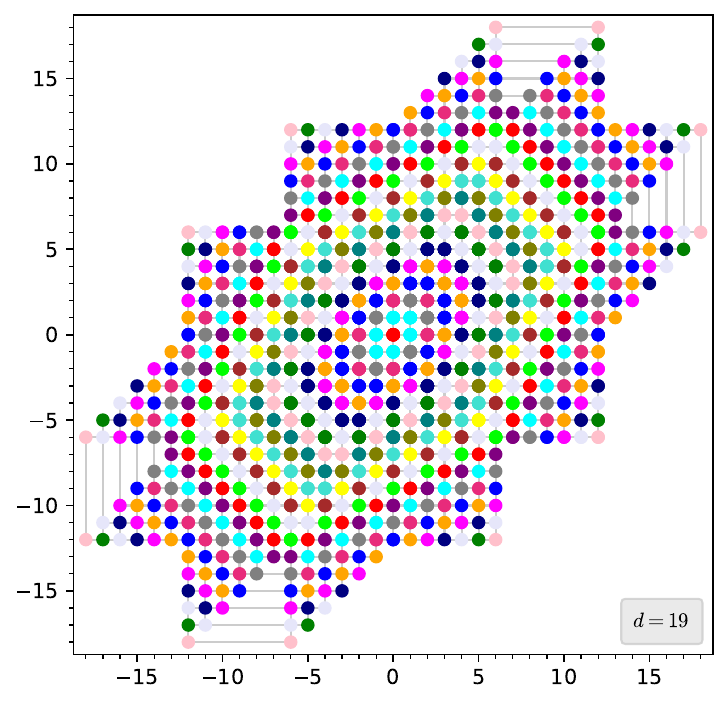}
\hfill\mbox{}
% \vspace*{-3mm}
\caption{Colored representation of the orbits by their perimeter mod $d$.
The images are grouped by parity, with $d = 6, 8, 16$ at the top, 
and $d = 9, 11, 19$ at the bottom.
The generators $\bx$ of the orbits $o(\bx)$ shown in the first and third images are in $[-8,8]^2$, 
the generators in the fifth are in~$[0,16]^2$, and the generators in the other `butterfly shape' are in $[-12,12]\times [0,6]$.
}
\label{FigureColoredOrbits}
\end{figure}
Additional topics such as
finding an exact formula for the number of orbits with a given perimeter 
or 
different approaches in estimating possible means of diameters or 
perimeters of orbits are further discussed in Section~\ref{SectionArithmeticOn2Dorbits}.

%%%%%%%%%%%%%%%%%%%%%%%%%
\section{\texorpdfstring{The involutions $K_j$}{The involutions Kj}}\label{SectionLemmas}

Let us begin by noticing  that the difference 
$\Id - \be_j \be_j^T $ 
sets the $j$th line of the identity matrix to zero
and $\be_j\br_j$ is the modified-zero matrix with the $j$th line replaced by $\br_j$, 
so that the operators 
$K_j= \Id - \be_j \be_j^T + \be_j\br_j$ defined
by~\eqref{eqDefKj} 
indeed satisfy the intended setup.

For example, if $n=3$, the operators are:
\begin{align*}
    K_1 = 
\begin{pmatrix}
    -1 & 1 & -1\\
    0 & 1 & 0 \\
    0 & 0 & 1
\end{pmatrix},
\ 
    K_2 = 
\begin{pmatrix}
    1 & 0 & 0\\
    1 & -1 & 1 \\
    0 & 0 & 1
\end{pmatrix},
\text{ and }
    K_3 = 
\begin{pmatrix}
    1 & 0 & 0\\
    0 & 1 & 0 \\
    -1 & 1 & -1
\end{pmatrix}.
\end{align*}

%%%%%%%%%%%%%%%%%%%%%%%%%%%%%%%%%%%%%%%%%%%%%%%%%
Here, we list the main operations that drive the grammar in the group
$\langle K_1, K_2, \dots, K_n \rangle$.

%%%%%%%%%%%%%%%%%%%%%%%%%%%%%%%
\begin{remark}\label{Remark_G}
   Let $n,j$, and $\ell$ be integers and suppose $1\le j,\ell\le n$.
 Then:
\begin{enumerate}[itemsep=4pt] %leftmargin=20px,
  \item[\namedlabel{Remark_GA}{\normalfont{(\textup{1})}}]%\normalfont{(1)
The following products are $1\times 1$ matrices:
$\be_j^T\be_j=(1)$;
$\br_j\be_j=-(1)$;
$\be_j^T\br_j^T=-(1)$; 
and~$\br_j\br_j^T= n(1)$.
Also, we have:
$\br_j\be_\ell = \br_\ell\be_j = (-1)^{j+l-1}(1)$.
%%%%%%
  \item[\namedlabel{Remark_GB}{\normalfont{(\textup{2})}}]
The $n\times n$ matrix $\be_j\be_\ell^T$ has $1$ at position $(j,\ell)$
and zeros elsewhere.
%%%%     
  \item[\namedlabel{Remark_GC}{\normalfont{(\textup{3})}}]
  The matrix $\be_j\br_j$ replaces the $j$-th line of the zero matrix 
  with $\br_j$.
%%%%     
  \item[\namedlabel{Remark_GD}{\normalfont{(\textup{4})}}]
 If $j\neq\ell$, then $\be_j^T\be_\ell=(0)$.
%%%%     
  \item[\namedlabel{Remark_GE}{\normalfont{(\textup{5})}}]
  If $j\neq\ell$, then
   $\be_\ell\br_\ell \be_j\br_j = -\be_\ell\br_\ell$.
\end{enumerate} 
\end{remark}

The next lemma verifies that $K_j$ is an involution.
%%%%%%%%%%%%%%%%%%%%%55%%%%%%%%%%%%%
\begin{lemma}\label{LemmaInvolutionK}
   Let $n$ and $j$ be integers, and suppose 
   $1\le j\le n$.
 Then  $K_j^2=\Id$.
\end{lemma}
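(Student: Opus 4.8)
The plan is to compute $K_j^2$ directly from the expression $K_j = \Id - \be_j\be_j^T + \be_j\br_j$ given in~\eqref{eqDefKj}, expanding the product as a sum of nine terms and simplifying each one using the elementary identities collected in Remark~\ref{Remark_G}. First I would write
$K_j^2 = (\Id - \be_j\be_j^T + \be_j\br_j)(\Id - \be_j\be_j^T + \be_j\br_j)$
and distribute. The three terms coming from multiplying by $\Id$ on the right simply reproduce $\Id - \be_j\be_j^T + \be_j\br_j$.

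Next I would handle the remaining six terms by inserting the scalar $1\times 1$ products from Remark~\ref{Remark_GA}: the middle factor in each of $\be_j(\be_j^T\be_j)\be_j^T$, $\be_j(\be_j^T\be_j)\br_j$, $\be_j(\br_j\be_j)\be_j^T$, and $\be_j(\br_j\be_j)\br_j$ is a scalar, namely $\be_j^T\be_j = (1)$ and $\br_j\be_j = -(1)$. This turns those four terms into $\be_j\be_j^T$, $\be_j\br_j$, $-\be_j\be_j^T$, and $-\be_j\br_j$ respectively. The two cross terms involving $\be_j\be_j^T \cdot \be_j\br_j$ and $\be_j\br_j\cdot\be_j\be_j^T$ likewise collapse via the scalar $\be_j^T\be_j=(1)$, contributing $-\be_j\br_j$ and $-\be_j\be_j^T$. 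Collecting everything, the $\be_j\be_j^T$-terms cancel, the $\be_j\br_j$-terms cancel, and $\be_j\br_j\be_j\br_j = (\br_j\be_j)\,\be_j\br_j = -\be_j\br_j$ (using $\br_j\be_j=-(1)$ again) cancels against the surviving $+\be_j\br_j$, leaving exactly $K_j^2 = \Id$.

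I do not expect any genuine obstacle here; the only thing to be careful about is bookkeeping — keeping track of the signs of the scalar contractions $\be_j^T\be_j = (1)$ and $\br_j\be_j = -(1)$, and making sure associativity is applied so that the scalar $1\times1$ block is always extracted from the interior of each triple product before combining like terms. An alternative, essentially equivalent, route is to observe that $K_j$ acts on a vector $\bx$ by leaving every coordinate fixed except the $j$-th, which is replaced by the alternating sum $\sum_{i=1}^n (-1)^{j+i-1}x_i$; applying this a second time and noting that the $j$-th coordinate of the output is again an alternating sum in which all but the $x_j$-contributions telescope to zero would give the same conclusion, but the matrix computation above is cleaner to write out.
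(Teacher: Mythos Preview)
Your approach is correct and essentially identical to the paper's: expand $K_j^2 = (\Id - \be_j\be_j^T + \be_j\br_j)^2$ into nine terms and simplify each using $\be_j^T\be_j = (1)$ and $\br_j\be_j = -(1)$ from Remark~\ref{Remark_G}.\ref{Remark_GA}, after which everything except $\Id$ cancels. (Your narrative double-lists the two cross terms --- they already appear among your four ``scalar-middle'' products --- and the second of them contracts via $\br_j\be_j=-(1)$ rather than $\be_j^T\be_j=(1)$; this is exactly the bookkeeping you flagged and does not affect the argument.)
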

\begin{proof}
The definition~\eqref{eqDefKj} implies
\begin{equation*}%\label{eqLemmaKi1}
  \begin{split}
   K_j^2 &= \big(\Id - \be_j \be_j^T + \be_j\br_j\big)
            \big(\Id - \be_j \be_j^T + \be_j\br_j\big)\\
   &= \Id- 2\be_j \be_j^T + 2\be_j\br_j
    + \be_j (\be_j^T\be_j) \be_j^T 
    - \be_j (\be_j^T\be_j)\br_j
   -  \be_j(\br_j\be_j) \be_j^T +  \be_j(\br_j\be_j)\br_j,
   % &= \Id - \be_j \be_j^T + \be_j\br_j
   % -  \be_j\br_j\be_j \be_j^T 
   % +  \be_j\br_j\be_j\br_j,
 \end{split}
\end{equation*}
where we have associated the matrices in the middle whose product is $1$-dimensional.
Then, observing by~Remark~\ref{Remark_G}.\ref{Remark_GA} that $\be_j^T\be_j = (1)$
and  $\br_j\be_j = -(1)$, it follows that
\begin{align*}
   K_j^2 &= \Id- 2\be_j \be_j^T + 2\be_j\br_j
    + \be_j  \be_j^T 
    - \be_j \br_j
   +  \be_j \be_j^T - \be_j\br_j
   =\Id.
\end{align*}
\end{proof}

Next we calculate the product of two distinct $K$-operators. 
%%%%%%%%%%%%%%%%%%%%%55%%%%%%%%%%%%%
\begin{lemma}\label{LemmaFormulaKjl}
   Let  $j,\ell$, and $n$ be integers and suppose $1\le j\neq \ell\le n$.
 Then 
\begin{align}\label{eqLemmaFormulaKjl}
    K_jK_\ell  
   = \Id- \be_j \be_j^T - \be_\ell \be_\ell^T
    + \be_\ell\br_\ell 
    + (-1)^{j+\ell}\be_j \be_\ell^T.
 \end{align}
\end{lemma}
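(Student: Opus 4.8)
The plan is to expand $K_jK_\ell = \big(\Id - \be_j \be_j^T + \be_j\br_j\big)\big(\Id - \be_\ell \be_\ell^T + \be_\ell\br_\ell\big)$ into its nine terms and simplify each one with the help of Remark~\ref{Remark_G}, in the same spirit as the proof of Lemma~\ref{LemmaInvolutionK}. The three terms obtained by multiplying the left factor $\Id$ across the right factor reproduce $\Id - \be_\ell \be_\ell^T + \be_\ell\br_\ell$ verbatim, while the remaining two terms coming from the right factor $\Id$ contribute $-\be_j \be_j^T$ and $\be_j\br_j$ directly. This already accounts for five of the nine terms, leaving four genuine cross terms to treat.

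For the cross terms I would first associate the middle $1\times 1$ block in each product. Two of them, namely $(-\be_j \be_j^T)(-\be_\ell \be_\ell^T) = \be_j(\be_j^T\be_\ell)\be_\ell^T$ and $(-\be_j \be_j^T)(\be_\ell\br_\ell) = -\be_j(\be_j^T\be_\ell)\br_\ell$, vanish because $j\neq\ell$ forces $\be_j^T\be_\ell = (0)$ by Remark~\ref{Remark_G}.\ref{Remark_GD}. For the term $(\be_j\br_j)(-\be_\ell \be_\ell^T) = -\be_j(\br_j\be_\ell)\be_\ell^T$, I would substitute $\br_j\be_\ell = (-1)^{j+\ell-1}(1)$ from Remark~\ref{Remark_G}.\ref{Remark_GA} and use $-(-1)^{j+\ell-1} = (-1)^{j+\ell}$, turning it into $(-1)^{j+\ell}\be_j\be_\ell^T$, which is precisely the last term in~\eqref{eqLemmaFormulaKjl}. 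Finally, the term $(\be_j\br_j)(\be_\ell\br_\ell)$ equals $-\be_j\br_j$ by Remark~\ref{Remark_G}.\ref{Remark_GE} applied with the roles of $j$ and $\ell$ interchanged, and this cancels the $+\be_j\br_j$ produced earlier. (Alternatively, one can skip Remark~\ref{Remark_G}.\ref{Remark_GE}: since $\br_j = (-1)^{j-\ell}\br_\ell$ entrywise, Remark~\ref{Remark_G}.\ref{Remark_GA} gives $(\be_j\br_j)(\be_\ell\br_\ell) = (-1)^{j+\ell-1}\be_j\br_\ell = -\be_j\br_j$.)

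Collecting the surviving contributions $\Id$, $-\be_\ell \be_\ell^T$, $\be_\ell\br_\ell$, $-\be_j \be_j^T$, and $(-1)^{j+\ell}\be_j\be_\ell^T$ yields exactly the right-hand side of~\eqref{eqLemmaFormulaKjl}. There is no real obstacle here beyond careful sign bookkeeping; the only two points that require a moment's attention are the vanishing of the $\be_j^T\be_\ell$ terms and the mutual cancellation of the two $\be_j\br_j$ terms, both of which rely crucially on the hypothesis $j\neq\ell$.
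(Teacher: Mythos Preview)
Your proof is correct and follows essentially the same approach as the paper: expand the product into nine terms, kill the two $\be_j^T\be_\ell$ cross terms via Remark~\ref{Remark_G}.\ref{Remark_GD}, and then cancel the residual $\be_j\br_j$ contribution. The only cosmetic difference is that the paper verifies $\br_j + (-1)^{j+\ell-1}\br_\ell = 0$ by a parity check, whereas you invoke Remark~\ref{Remark_G}.\ref{Remark_GE} directly for the same cancellation---and you even list the paper's route as your alternative.
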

\begin{proof}
By definition~\eqref{eqDefKj}, we have:
\begin{equation}\label{eqLemmaFormulaKjl2}
  \begin{split}
   K_jK_\ell &= \big(\Id - \be_j \be_j^T + \be_j\br_j\big)
            \big(\Id - \be_\ell \be_\ell^T + \be_\ell\br_\ell\big)\\
   &= \Id- \be_j \be_j^T - \be_\ell \be_\ell^T
   + \be_j\br_j + \be_\ell\br_\ell\\
  &   \quad 
    + \be_j (\be_j^T\be_\ell) \be_\ell^T 
    - \be_j (\be_j^T\be_\ell)\br_\ell
   -  \be_j(\br_j\be_\ell) \be_\ell^T +  \be_j(\br_j\be_\ell)\br_\ell.
 \end{split}
\end{equation}
According to~Remark~\ref{Remark_G}.\ref{Remark_GD} $j\neq\ell$ implies $\be_j^T\be_\ell=(0)$, so that
the first two terms on the last line of~\eqref{eqLemmaFormulaKjl2} are $\bzero$ (the $n\times n$ zero matrix).
Thus
\begin{equation}\label{eqLemmaFormulaKjl3}
  \begin{split}
   K_jK_\ell  
   &= \Id- \be_j \be_j^T - \be_\ell \be_\ell^T
   + \be_j\br_j + \be_\ell\br_\ell\\
  &   \quad 
    % + \be_j (\be_j^T\be_\ell) \be_\ell^T 
    % - \be_j (\be_j^T\be_\ell)\br_\ell
   -\be_j(\br_j\be_\ell) \be_\ell^T +  \be_j(\br_j\be_\ell)\br_\ell.
 \end{split}
\end{equation}
By~Remark~\ref{Remark_G}.\ref{Remark_GA}, the last two terms above are:
\begin{align*}%\label{eqLemmaFormulaKjl4}
   -\be_j(\br_j\be_\ell) \be_\ell^T +  \be_j(\br_j\be_\ell)\br_\ell  
   = (-1)^{j+\ell-1}(-\be_j \be_\ell^T+\be_j\br_\ell ).
\end{align*}
Replacing this expression in~\eqref{eqLemmaFormulaKjl3} and rearranging the terms, we obtain
\begin{align}\label{eqLemmaFormulaKjl5}
  K_jK_\ell  
   &= \Id- \be_j \be_j^T - \be_\ell \be_\ell^T
    + \be_\ell\br_\ell 
    + (-1)^{j+\ell}\be_j \be_\ell^T
   + \be_j\big(\br_j + (-1)^{j+l-1}\br_\ell\big).
\end{align}
By checking all possible parity cases, we find that the last term cancels out
yielding~\eqref{eqLemmaFormulaKjl} and concluding the proof.
\end{proof}
Thus, Lemma~\ref{LemmaFormulaKjl} shows that the product 
$K_jK_\ell$ 
product is the identity matrix transformed by replacing line $\ell$ with $\br_\ell$, 
and on line $j$ the element $(j,j)$ is replaced with $0$, 
and the element $(j,\ell)$ is replaced with $(-1)^{j+l}$.
For example, if $n=4$ we have:
\begin{align*}
  K_1K_4 = 
\begin{pmatrix}
    0 & 0 & 0 & -1 \\
    0 & 1 & 0 & 0 \\
    0 & 0 & 1 & 0 \\
    1 & -1 & 1 & -1
\end{pmatrix}
  \ \ \text{ and }\ \
  K_2K_3=
\begin{pmatrix}
    1 & 0 & 0 & 0 \\
    0 & 0 & -1 & 0 \\
    -1 & 1 & -1 & 1 \\
    0 & 0 & 0 & 1
\end{pmatrix}.
\end{align*}
Following the sequential transformations in the proof
of Lemma~\ref{LemmaFormulaKjl} 
step by step, we see that
in the first line of relation~\eqref{eqLemmaFormulaKjl3}, 
in the identity matrix the lines $j$ and $\ell$ 
are replaced with 
$\br_j$ and $\br_l$ respectively.
Then, after the cancellation of the last term in~\eqref{eqLemmaFormulaKjl5} the entire line $j$ is substituted with zeros, and the $\ell$-th element is replaced by $(-1)^{j+\ell}$.

%%%%%%%%%%%%%%%%%%%%%55%%%%%%%%%%%%%
\begin{lemma}\label{LemmaOrderKjl}
   Let  $j,\ell$, and $n$ be integers and suppose $1\le j\neq \ell\le n$.
 Then: 
 \begin{enumerate}[itemsep=4pt]
  \item[\namedlabel{LemmaOrderKjlA}{\normalfont{(i)}}]
    $K_jK_\ell K_j=K_\ell K_jK_\ell
    = \Id -\be_j \be_j^T - \be_\ell \be_\ell^T
  + (-1)^{j+\ell}
  \left(\be_j \be_\ell^T + \be_l\be_j^T\right)$;
    
  \item[\namedlabel{LemmaOrderKjlB}{\normalfont{(ii)}}]
    $(K_jK_\ell)^3 = \Id$.
\end{enumerate} 
\end{lemma}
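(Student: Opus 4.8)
The plan is to obtain part~(i) by a single right-multiplication of the formula from Lemma~\ref{LemmaFormulaKjl} by $K_j$, and then to deduce part~(ii) as a purely formal consequence of~(i) together with Lemma~\ref{LemmaInvolutionK}.

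For part~(i), set $\varepsilon := (-1)^{j+\ell}$, so that $(-1)^{j+\ell-1} = -\varepsilon$. Starting from $K_jK_\ell = \Id - \be_j\be_j^T - \be_\ell\be_\ell^T + \be_\ell\br_\ell + \varepsilon\be_j\be_\ell^T$, I multiply on the right by $K_j = \Id - \be_j\be_j^T + \be_j\br_j$ and expand, associating every inner scalar factor exactly as in the proofs of Lemmas~\ref{LemmaInvolutionK} and~\ref{LemmaFormulaKjl}. The only $1\times 1$ products that appear are $\be_j^T\be_j=(1)$, $\br_j\be_j=-(1)$, $\be_\ell^T\be_j=(0)$, and $\br_\ell\be_j=(-1)^{j+\ell-1}(1)=-\varepsilon(1)$, all recorded in Remark~\ref{Remark_G}. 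Two cancellations occur: the term $+\be_j\br_j$ coming from $\Id\cdot K_j$ cancels the term $-\be_j\br_j$ coming from $-\be_j\be_j^T\cdot K_j$, and every term carrying a factor $\be_\ell^T\be_j$ is $\bzero$. What remains is $K_jK_\ell K_j = \Id - \be_j\be_j^T - \be_\ell\be_\ell^T + \be_\ell(\br_\ell - \varepsilon\br_j) + \varepsilon(\be_j\be_\ell^T + \be_\ell\be_j^T)$.

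The one point needing the actual structure of the rows $\br_m$ rather than formal manipulation is the vanishing of $\br_\ell - \varepsilon\br_j$; I expect this small parity check — the analogue of the one closing the proof of Lemma~\ref{LemmaFormulaKjl} — to be the only slightly delicate step. Since the $k$-th entry of $\br_m$ equals $(-1)^{m+k-1}$, we have $\br_j = (-1)^{j-\ell}\br_\ell = (-1)^{j+\ell}\br_\ell = \varepsilon\br_\ell$, whence $\br_\ell - \varepsilon\br_j = \br_\ell - \varepsilon^2\br_\ell = \bzero$ because $\varepsilon^2 = 1$. This yields the claimed formula $K_jK_\ell K_j = \Id - \be_j\be_j^T - \be_\ell\be_\ell^T + \varepsilon(\be_j\be_\ell^T + \be_\ell\be_j^T)$, which is manifestly symmetric under the swap $j\leftrightarrow\ell$ (note that $\varepsilon$ is unchanged), so it equals $K_\ell K_j K_\ell$ as well, proving~(i). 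For part~(ii), write $(K_jK_\ell)^3 = K_jK_\ell K_j K_\ell K_j K_\ell$ and regroup the middle three factors as $K_j(K_\ell K_j K_\ell)K_jK_\ell$; by~(i) we may replace $K_\ell K_j K_\ell$ by $K_jK_\ell K_j$, obtaining $K_j(K_jK_\ell K_j)K_jK_\ell = (K_j^2)\,K_\ell\,(K_j^2)\,K_\ell = K_\ell^2 = \Id$ by Lemma~\ref{LemmaInvolutionK}, with no further computation required.
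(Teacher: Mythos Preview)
Your proof is correct and follows essentially the same route as the paper: both compute $K_jK_\ell K_j$ by right-multiplying the formula of Lemma~\ref{LemmaFormulaKjl} by $K_j$, reduce using the scalar identities of Remark~\ref{Remark_G}, and observe the $j\leftrightarrow\ell$ symmetry; the only cosmetic difference is that the paper packages the final cancellation as $\be_\ell\br_\ell(\Id+\be_j\br_j)=\bzero$ via Remark~\ref{Remark_G}.\ref{Remark_GE}, whereas you reach the equivalent $\br_\ell-\varepsilon\br_j=\bzero$ by a direct parity check. Your derivation of~(ii) from the braid relation and Lemma~\ref{LemmaInvolutionK} is likewise the same argument the paper gives, just with the multiplications grouped slightly differently.
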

\begin{proof}
By Lemma~\ref{LemmaFormulaKjl} and~\eqref{eqDefKj}, we have:
\begin{equation}\label{eqKjlKj1}
  \begin{split}
  K_jK_\ell K_j &=
    \left(\Id- \be_j \be_j^T - \be_\ell \be_\ell^T
    + \be_\ell\br_\ell 
    + (-1)^{j+\ell}\be_j \be_\ell^T\right)
    \left(\Id - \be_j \be_j^T 
  + \be_j\br_j\right)\\
  &=
  \Id- \be_j \be_j^T - \be_\ell \be_\ell^T
    + \be_\ell\br_\ell 
    + (-1)^{j+\ell}\be_j \be_\ell^T\\
   &\quad - \be_j \be_j^T + \be_j \be_j^T \be_j \be_j^T  
   + \be_\ell \be_\ell^T \be_j \be_j^T 
    - \be_\ell\br_\ell  \be_j \be_j^T 
    + (-1)^{j+\ell+1}\be_j \be_\ell^T \be_j \be_j^T \\
   &\quad +\be_j\br_j - \be_j \be_j^T\be_j\br_j 
   - \be_\ell \be_\ell^T\be_j\br_j
    + \be_\ell\br_\ell \be_j\br_j
    + (-1)^{j+\ell}\be_j \be_\ell^T\be_j\br_j.
  \end{split}    
\end{equation}
Since $\be_j^T \be_j=(1)$ by~Remark~\ref{Remark_G}.\ref{Remark_GA}, and 
$\be_\ell^T \be_j = (0)$ by~Remark~\ref{Remark_G}.\ref{Remark_GD},
expression~\eqref{eqKjlKj1} reduces to
\begin{equation*}%\label{eqKjlKj2}
  \begin{split}
  K_jK_\ell K_j &=
  \Id- \be_j \be_j^T - \be_\ell \be_\ell^T
    + \be_\ell\br_\ell 
    + (-1)^{j+\ell}\be_j \be_\ell^T\\
   &\quad 
    - \be_\ell\br_\ell  \be_j \be_j^T 
    + \be_\ell\br_\ell \be_j\br_j.
  \end{split}    
\end{equation*}
Here we observe that $\be_\ell\br_\ell  \be_j \be_j^T$ 
is the zero matrix with the $j$-th element on line $l$
replaced by $(-1)^{l+j-1}$, so that it coincides with 
$(-1)^{l+j-1}\be_l\be_j^T$. Therefore
\begin{equation*}%\label{eqKjlKj2}
  \begin{split}
  K_jK_\ell K_j &=
  \Id -\be_j \be_j^T - \be_\ell \be_\ell^T
  + (-1)^{j+\ell}
  \left(\be_j \be_\ell^T + \be_l\be_j^T\right)
  % \\
  %   &\quad  
    + \be_\ell\br_\ell \left(\Id+\be_j\br_j\right).
  \end{split}    
\end{equation*}
In the previous equality, the last term cancels out according to Remark~\ref{Remark_G}.\ref{Remark_GE} and, since it is invariant under the interchange of $j$ with $l$, the equalities in~\ref{LemmaOrderKjlA} follow.

In order to establish~\ref{LemmaOrderKjlB},
we start with the equality $K_jK_\ell K_j=K_\ell K_jK_\ell$ and multiply it successively on the right 
by $K_l$, $K_j$, and $K_l$, noting that the involution shown in Lemma~\ref{LemmaInvolutionK} 
leads to the right-hand side of the outcome being equal to $\Id$.

This completes the proof of Lemma~\ref{LemmaOrderKjl}.
\end{proof}
%%%%%%%%%%%%%%%%%%%%%%
Here are three examples of matrices yielded by the products 
in Lemma~\ref{LemmaOrderKjl}.\ref{LemmaOrderKjlA} in dimension~$n=4$:
\begin{align*}
  K_1K_2K_1 =  
  \begin{pmatrix}
    0 & -1 & 0 & 0 \\
    -1 & 0 & 0 & 0 \\
    0 & 0 & 1 & 0 \\
    0 & 0 & 0 & 1   
  \end{pmatrix};\ \ 
    K_3K_1K_3 =  
  \begin{pmatrix}
    0 & 0 & 1 & 0 \\
    0 & 1 & 0 & 0 \\
    1 & 0 & 0 & 0 \\
    0 & 0 & 0 & 1
  \end{pmatrix};\ \ 
      K_2K_4K_2 =  
  \begin{pmatrix}
    1 & 0 & 0 & 0 \\
    0 & 0 & 0 & 1 \\
    0 & 0 & 1 & 0 \\
    0 & 1 & 0 & 0 
  \end{pmatrix}.
\end{align*}

The lemmas that follow provide the pattern of matrices generated by ordered products.
To illustrate, if $n=4$ we have:
\begin{align*}
  K_1K_2K_3 =  
  \begin{pmatrix}
    0 & -1 & 0 & 0 \\
    0 & 0 & -1 & 0 \\
    -1 & 1 & -1 & 1 \\
    0 & 0 & 0 & 1 
  \end{pmatrix}\ \ 
    \text{ and }\ \ 
      K_4K_3K_2 =  
  \begin{pmatrix}
    1 & 0 & 0 & 0 \\
    1 & -1 & 1 & -1 \\
    0 & -1 & 0 & 0 \\
    0 & 0 & -1 & 0
  \end{pmatrix}.
\end{align*}
Thus the product $K_1K_2\cdots K_s$ acts as if 
it makes the following changes to the identity matrix:
line $s$ is replaced with the alternating line $\br_s$, 
in the lines above the elements on the main diagonal are replaced with zeros, 
and the adjacent elements to their right are replaced with $(-1)$'s.

The pattern is adapted for the product $K_nK_{n-1}\cdots K_{n-s+1}$.
In this case, the result makes the following modifications to the identity matrix:
line $n-s+1$ is replaced with the alternating line $\br_{n-s+1}$,
the diagonal elements on the lines below it are zeroed out, 
and the adjacent elements to their left are replaced with $(-1)$'s.

%%%%%%%%%%%%%%%%%%%%%55%%%%%%%%%%%%%
\begin{lemma}\label{LemmaOrderedProducts}
Let  $s$ and $n$ be integers and suppose $2\le s\le n$.
 Then: 
\begin{align}
   K_1K_2\cdots K_s &=
   \Id 
-\sum_{j=1}^{s}\be_j\be_j^T
-\sum_{j=1}^{s-1}\be_j\be_{j+1}^T
+ \be_s\br_s
   \label{eqProductUP}\\
   \shortintertext{and}
K_{n}K_{n-1}\cdots K_{n-s+1} &= 
\Id -\sum_{j=1}^{s}\be_{n-j+1}\be_{n-j+1}^T 
    -\sum_{j=1}^{s-1}\be_{n-j+1}\be_{n-j}^T + \be_{n-s+1}\br_{n-s+1}.
  \label{eqProductDown}
\end{align}
\end{lemma}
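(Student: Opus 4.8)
The plan is to prove both identities simultaneously by induction on $s$, using Lemma~\ref{LemmaFormulaKjl} as the base case and the single involution definition~\eqref{eqDefKj} to handle the inductive step. I will present the argument for~\eqref{eqProductUP}; the formula~\eqref{eqProductDown} follows by the same reasoning with the indices reversed, or alternatively by observing that conjugation/relabelling $j \mapsto n-j+1$ carries one statement into the other (one should check that $\br_{n-j+1}$ transforms correctly under this relabelling, which it does up to the alternating sign already built into the definition of $\br_j$).

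For the base case $s = 2$, formula~\eqref{eqProductUP} reads $K_1 K_2 = \Id - \be_1\be_1^T - \be_2\be_2^T - \be_1\be_2^T + \be_2\br_2$, which is exactly~\eqref{eqLemmaFormulaKjl} of Lemma~\ref{LemmaFormulaKjl} with $j=1$, $\ell=2$, since $(-1)^{1+2} = -1$. For the inductive step, assume~\eqref{eqProductUP} holds for some $s$ with $2 \le s \le n-1$ and multiply on the right by $K_{s+1} = \Id - \be_{s+1}\be_{s+1}^T + \be_{s+1}\br_{s+1}$. Expanding the product, the bulk of the terms are handled by Remark~\ref{Remark_G}: the products $\be_j^T \be_{s+1}$ vanish for $j \le s$ by~\ref{Remark_GD}, so the only surviving cross-terms come from the old $\be_s \br_s$ block hitting $-\be_{s+1}\be_{s+1}^T$ and $\be_{s+1}\br_{s+1}$. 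Using $\br_s \be_{s+1} = (-1)^{2s}(1) = (1)$ from~\ref{Remark_GA}, these produce $-\be_s \be_{s+1}^T$ and $\be_s \br_{s+1}$. One then uses Remark~\ref{Remark_GE}, namely $\be_s \br_s \be_{s+1}\br_{s+1} = -\be_s\br_s$, to see that the $\be_s\br_{s+1}$ contribution together with $\be_s \br_s$ collapses to $\be_s\br_s - \be_s\br_s = \bzero$ after the appropriate sign bookkeeping; what remains is precisely $\Id - \sum_{j=1}^{s+1}\be_j\be_j^T - \sum_{j=1}^{s}\be_j\be_{j+1}^T + \be_{s+1}\br_{s+1}$, which is~\eqref{eqProductUP} with $s$ replaced by $s+1$.

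The main obstacle I anticipate is purely bookkeeping: tracking the parity signs $(-1)^{j+\ell}$ and the $1\times 1$ scalar products from Remark~\ref{Remark_G}.\ref{Remark_GA} carefully enough that the near-cancellations are exact rather than off by a sign. In particular, the key scalar $\br_s \be_{s+1}$ equals $(-1)^{s + (s+1) - 1}(1) = (-1)^{2s}(1) = (1)$, and getting this right is what makes the $\be_j\be_{j+1}^T$ sum come out with the clean coefficient $-1$ at every index rather than an alternating sign; this matches the explicit $K_1K_2K_3$ example shown for $n=4$. A secondary point requiring a line of justification is that the interaction terms between non-adjacent $\be_j\be_j^T$ blocks (for $j < s$) and $K_{s+1}$ genuinely vanish — but this is immediate from~\ref{Remark_GD} since every such term contains a factor $\be_j^T\be_{s+1}$ with $j \ne s+1$. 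Once~\eqref{eqProductUP} is established, I would state that~\eqref{eqProductDown} is proved identically, replacing the role of $K_1$ by $K_n$, of adjacency "to the right" by adjacency "to the left", and of $\be_j^T\be_{s+1}$ by $\be_{n-j+1}^T \be_{n-s}$, with Remark~\ref{Remark_G}.\ref{Remark_GD} and~\ref{Remark_GA} again supplying the vanishing and the scalar evaluations; no new idea is needed.
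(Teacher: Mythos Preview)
Your proposal is correct and follows essentially the same approach as the paper: induction on $s$, with Lemma~\ref{LemmaFormulaKjl} supplying the base case $s=2$, and the inductive step handled by expanding the product with $K_{s+1}$ and using Remark~\ref{Remark_G}.\ref{Remark_GD}, \ref{Remark_GA}, and~\ref{Remark_GE} to eliminate the cross-terms. The paper organizes the leftover terms into three cancelling pairs rather than tracking the two surviving cross-terms as you do, but this is purely cosmetic; your additional remark that~\eqref{eqProductDown} can be obtained from~\eqref{eqProductUP} by the relabelling $j\mapsto n-j+1$ is a valid shortcut the paper does not mention (it simply says the second formula ``follows along the same lines'').
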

\begin{proof}
We prove the results by induction on $s$.
If $s=2$, the formulas follow from Lemma~\ref{LemmaFormulaKjl}.
Let us assume that~\eqref{eqProductUP} holds for some fixed number of factors
\mbox{$s$ with $2\le s\le n-1$}.
Then
\begin{align*}
   K_1\cdots K_s\cdot K_{s+1} &=
   \left(\Id 
-\sum_{j=1}^{s}\be_j\be_j^T
-\sum_{j=1}^{s-1}\be_j\be_{j+1}^T
+ \be_s\br_s\right)
   \left(\Id - \be_{s+1} \be_{s+1}^T 
  + \be_{s+1}\br_{s+1}\right)\\
   &=\Id 
-\sum_{j=1}^{s}\be_j\be_j^T
-\sum_{j=1}^{s-1}\be_j\be_{j+1}^T
+ \be_s\br_s\\
&\quad 
 - \be_{s+1} \be_{s+1}^T
 + \sum_{j=1}^{s}\be_j(\be_j^T  \be_{s+1}) \be_{s+1}^T
 + \sum_{j=1}^{s-1}\be_j(\be_{j+1}^T  \be_{s+1}) \be_{s+1}^T
 - \be_s\br_s  \be_{s+1} \be_{s+1}^T\\
&\quad 
 + \be_{s+1} \br_{s+1}
 - \sum_{j=1}^{s}\be_j(\be_j^T  \be_{s+1}) \br_{s+1}
 - \sum_{j=1}^{s-1}\be_j(\be_{j+1}^T  \be_{s+1}) \br_{s+1}
 + \be_s\br_s  \be_{s+1} \br_{s+1}.
\end{align*}
Since, according to Remark~\ref{Remark_G}.\ref{Remark_GD}, 
the multiple sums from the last two lines above cancel out, it follows
\begin{align*}
   K_1\cdots K_s\cdot K_{s+1} &=
\Id-\sum_{j=1}^{s+1}\be_j\be_j^T
-\sum_{j=1}^{s}\be_j\be_{j+1}^T
 + \be_{s+1} \br_{s+1}
\\
&\quad 
 + \be_{s+1}\be_{s+1}^T +  \be_s\be_{s+1}^T 
\\
&\quad 
 - \be_{s+1} \be_{s+1}^T
 - \be_s\br_s  \be_{s+1} \be_{s+1}^T\\
&\quad 
 + \be_s\br_s + \be_s\br_s  \be_{s+1} \br_{s+1},
\end{align*}
where, on the first line, we completed the sums of the formula 
to be proved for the order~\mbox{$s+1$},
and also made the necessary interchange for this of the position of the terms 
$\be_s\br_s$ and $\be_{s+1}\br_{s+1}$.

It remains to check that the sum of the terms on the last three lines above 
cancels each other out. Indeed, by grouping them conveniently, their sum equals
\begin{align*}
 (\be_{s+1}\be_{s+1}^T-\be_{s+1}\be_{s+1}^T) 
 +  (\be_s\be_{s+1}^T - \be_s\br_s  \be_{s+1} \be_{s+1}^T)
 + (\be_s\br_s + \be_s\br_s  \be_{s+1} \br_{s+1}) = \bzero,
\end{align*}
because the sum in the second parenthesis cancels out since, 
according to Remark~\ref{Remark_G}.\ref{Remark_GA}, 
we have $\br_s\be_{s+1}=(-1)^{s+(s+1)-1}=(1)$,
and the sum in the last parentheses cancels out since, 
according to Remark~\ref{Remark_G}.\ref{Remark_GE}, 
we have $\be_s\br_s\be_{s+1}\br_{s+1}=-\be_s\br_s$.

The proof of formula~\eqref{eqProductDown} follows along the same lines.
This completes the proof of Lemma~\ref{LemmaOrderedProducts}. 
\end{proof}

%%%%%%%%%%%%%%%%%%%%%%%%%%%%%%%%%%%%%%%%%%%%%%%%
A revised pattern is created by the reverted products. 
When $n=4$ and $s=3$, it produces:
\begin{align*}
  K_3K_2K_1 =  % ProdInc_s_Reversed(3,4):
  \begin{pmatrix}
    -1 & 1 & -1 & 1 \\
    -1 & 0 & 0 & 0 \\
    0 & -1 & 0 & 0 \\
    0 & 0 & 0 & 1
  \end{pmatrix}\ \ 
    \text{ and }\ \ 
      K_2K_3K_4 =  
  \begin{pmatrix}
    1 & 0 & 0 & 0 \\
    0 & 0 & -1 & 0 \\
    0 & 0 & 0 & -1 \\
    1 & -1 & 1 & -1
  \end{pmatrix},
\end{align*}
and the general result follows 
by adjusting in accordance with the proof of Lemma~\ref{LemmaOrderedProducts}.  
%%%%%%%%%%%%%%%%%%%%%55%%%%%%%%%%%%%
\begin{lemma}\label{LemmaOrderedProductsReversed}
Let  $s$ and $n$ be integers and suppose $2\le s\le n$.
 Then: 
\begin{align}
   K_sK_{s-1}\cdots K_1 &=
   \Id 
-\sum_{j=1}^{s}\be_j\be_j^T
-\sum_{j=2}^{s}\be_j\be_{j-1}^T
+ \be_1\br_1
   \label{eqProductUPReversed}\\
   \intertext{and}
K_{n-s+1}\cdots K_{n-1}K_{n} &= 
% K_n*K_(n-1)*...*K_(n-s+1)
\Id -\sum_{j=1}^{s}\be_{n-j+1}\be_{n-j+1}^T 
    -\sum_{j=2}^{s}\be_{n-j+1}\be_{n-j+2}^T + \be_{n}\br_{n}.
  \label{eqProductDownReversed}
\end{align}
\end{lemma}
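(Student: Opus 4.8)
The plan is to mimic the proof of Lemma~\ref{LemmaOrderedProducts} verbatim, exploiting the structural symmetry between a product and its reversal. Formally, reversing the order of the factors in an ordered product corresponds to transposition: since each $K_j$ is symmetric (indeed $K_j^T = \Id - \be_j\be_j^T + \br_j^T\be_j^T$, and one checks directly that this equals $K_j$ because $\be_j\br_j$ and $\br_j^T\be_j^T$ are transposes of one another but $K_j$ is actually not symmetric — so this shortcut needs care). Let me instead commit to a direct induction on $s$, exactly paralleling Lemma~\ref{LemmaOrderedProducts}.

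First I would establish the base case $s=2$: both \eqref{eqProductUPReversed} and \eqref{eqProductDownReversed} reduce to the identity $K_2K_1 = \Id - \be_1\be_1^T - \be_2\be_2^T - \be_2\be_1^T + \be_1\br_1$, which is Lemma~\ref{LemmaFormulaKjl} with the roles of $j$ and $\ell$ swapped (take $j=2$, $\ell=1$, noting $(-1)^{2+1} = -1$). Then, assuming \eqref{eqProductUPReversed} holds for some $s$ with $2 \le s \le n-1$, I would left-multiply by $K_{s+1}$:
\begin{align*}
  K_{s+1}\cdot K_sK_{s-1}\cdots K_1 = \big(\Id - \be_{s+1}\be_{s+1}^T + \be_{s+1}\br_{s+1}\big)\Big(\Id - \sum_{j=1}^{s}\be_j\be_j^T - \sum_{j=2}^{s}\be_j\be_{j-1}^T + \be_1\br_1\Big),
\end{align*}
expand, and collapse the cross-terms using Remark~\ref{Remark_G}. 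The key simplifications are: $\be_{s+1}^T\be_j = (0)$ for $j \le s$ (Remark~\ref{Remark_G}.\ref{Remark_GD}), which kills most of the double sums; $\br_{s+1}\be_s = (-1)^{(s+1)+s-1}(1) = (1)$ (Remark~\ref{Remark_G}.\ref{Remark_GA}), which converts $\be_{s+1}\br_{s+1}\be_s\be_s^T$ into $\be_{s+1}\be_s^T$, supplying exactly the new term $-\be_{s+1}\be_s^T$ needed to extend the second sum to $j = s+1$; and $\be_{s+1}\br_{s+1}\be_1\br_1 = -\be_{s+1}\br_{s+1}$ if $s+1 \ne 1$ (Remark~\ref{Remark_G}.\ref{Remark_GE}), so that $\be_1\br_1$ survives unchanged while the stray $\be_{s+1}\br_{s+1}$ terms cancel against one another — as in the original proof. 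After regrouping I would verify the surviving terms assemble into $\Id - \sum_{j=1}^{s+1}\be_j\be_j^T - \sum_{j=2}^{s+1}\be_j\be_{j-1}^T + \be_1\br_1$, completing the induction for \eqref{eqProductUPReversed}. The proof of \eqref{eqProductDownReversed} is the same computation with indices shifted by the reindexing $j \mapsto n - j + 1$; alternatively, one observes that conjugating by the order-reversing permutation matrix $J$ (with $1$'s on the anti-diagonal) sends $K_j$ to $K_{n+1-j}$, so \eqref{eqProductDownReversed} follows from \eqref{eqProductUPReversed} by substituting $s \mapsto n$-shifted indices — but since that conjugation claim itself requires checking that $\br_j$ reverses correctly (it does up to an overall sign that is absorbed), the cleanest route is simply to rerun the induction directly.

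The main obstacle I anticipate is purely bookkeeping: keeping the sign exponents straight when the factor indices are \emph{decreasing} rather than increasing, so that the ``adjacent element to the left'' (rather than to the right) is the one replaced by $-1$, and making sure the single alternating line lands on row $1$ (resp.\ row $n$) rather than row $s$ (resp.\ row $n-s+1$). There is no conceptual difficulty beyond Lemma~\ref{LemmaOrderedProducts} — the two formulas are genuinely its mirror images — so the write-up can be kept short by saying ``the proof follows along the same lines as Lemma~\ref{LemmaOrderedProducts}, with the inductive step multiplying on the left instead of the right,'' and then displaying only the one non-obvious cancellation (the $\br_{s+1}\be_s = (1)$ identity producing the new off-diagonal term).
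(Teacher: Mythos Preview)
Your proposal is correct and takes essentially the same approach as the paper, which simply says the result ``follows by adjusting in accordance with the proof of Lemma~\ref{LemmaOrderedProducts}.'' Your inductive left-multiplication by $K_{s+1}$ is precisely that adjustment; the only caveat is that the term $-\be_{s+1}\be_s^T$ emerges from a full telescoping of the two sums $-\sum_{j=1}^{s}\be_{s+1}(\br_{s+1}\be_j)\be_j^T$ and $-\sum_{j=2}^{s}\be_{s+1}(\br_{s+1}\be_j)\be_{j-1}^T$ (not just the single $j=s$ contribution you highlighted), but this is exactly the bookkeeping you already flagged.
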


%%%%%%%%%%%%%%%%%%%%%%%%%%%%%%%%%%%%%%%%
\begin{theorem}\label{PropositionPodDI}
  We have:
\begin{align*}%\label{eqProdDI}
K_nK_{n-1}\cdots K_2K_1 &=
\begin{pmatrix}
-1 & 1 & -1 & 1 & \cdots & (-1)^{n-1} & (-1)^{n} \\
-1 & 0 & 0 & 0 & \cdots & 0 & 0 \\
0 & -1 & 0 & 0 & \cdots & 0 & 0 \\
0 & 0 & -1 & 0 & \cdots & 0 & 0 \\
\vdots & \vdots & \vdots & \vdots & \ddots & \vdots & \vdots \\
0 & 0 & 0 & 0 & \cdots & 0 & 0 \\
0 & 0 & 0 & 0 & \cdots & -1 & 0 
\end{pmatrix},\\[8pt]
% \end{align}
% \begin{align}\label{eqf}
K_1 K_2\cdots K_{n-1}K_n &= 
% \begin{pmatrix}[0.59]
\arraycolsep=-1pt % nicematrix/nicematrix.pdf page 26--27
\begin{pNiceMatrix}[columns-width = 44pt]%[columns-width = auto]
0 & -1 & 0 & 0 & \cdots & 0 & 0 \\
0 & 0 & -1 & 0 & \cdots & 0 & 0 \\
0 & 0 & 0 & -1 & \cdots & 0 & 0 \\
\vdots & \vdots & \vdots & \vdots & \ddots & \vdots & \vdots \\
0 & 0 & 0 & 0 & \cdots & 0 & -1 \\
(-1)^{n} & (-1)^{n+1} & (-1)^{n+2} & (-1)^{n+3} & \cdots & (-1)^{2n-1} & (-1)^{2n} 
\end{pNiceMatrix},
\end{align*}
\begin{align}\label{eqProdAllDI}
(K_nK_{n-1}\cdots K_2K_1)^{n+1} &=
(K_1 K_2\cdots K_{n-1}K_n)^{n+1} = \Id,
\end{align}
and $n+1$ %is the least integer 
is the smallest
$N\ge 1$ for which 
\mbox{$(K_nK_{n-1}\cdots K_2K_1)^{N} =
(K_1 K_2\cdots K_{n-1}K_n)^{N} = \Id$.}
\end{theorem}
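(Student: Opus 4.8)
The plan is to read the two explicit matrix forms straight off Lemmas~\ref{LemmaOrderedProducts} and~\ref{LemmaOrderedProductsReversed}, then to reduce both power statements to a single product, and finally to pin down the minimal polynomial of that product, from which $(\,\cdot\,)^{n+1}=\Id$ and the minimality of the exponent $n+1$ both drop out. For the matrix shapes I would set $s=n$ in formula~\eqref{eqProductUPReversed}, obtaining $K_nK_{n-1}\cdots K_1=\Id-\sum_{j=1}^{n}\be_j\be_j^T-\sum_{j=2}^{n}\be_j\be_{j-1}^T+\be_1\br_1$, which zeroes the diagonal of $\Id$, puts $-1$'s on the subdiagonal, and writes the alternating vector $\br_1$ as the first row: this is the first displayed matrix. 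Setting $s=n$ in~\eqref{eqProductUP} likewise gives the zero diagonal, $-1$'s on the superdiagonal, and $\br_n$ as the last row, which is the second displayed matrix. Since each $K_i$ is an involution by Lemma~\ref{LemmaInvolutionK}, $(K_nK_{n-1}\cdots K_1)^{-1}=K_1^{-1}\cdots K_n^{-1}=K_1K_2\cdots K_n$, so the two products are mutually inverse; hence it suffices to analyse $A:=K_nK_{n-1}\cdots K_2K_1$, since the claims for $K_1K_2\cdots K_n=A^{-1}$ then follow at once, $A^{-1}$ having the same order as $A$.

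Next I would show that $\be_1$ is a cyclic vector for $A$. Reading columns off the first matrix, $A\be_j=(-1)^j\be_1-\be_{j+1}$ for $1\le j\le n-1$ and $A\be_n=(-1)^n\be_1$. A short induction on $j$ converts the first relations into
\begin{align*}
  \be_{j+1}=(-1)^{j}\big(\Id+A+\cdots+A^{j}\big)\be_1,\qquad 0\le j\le n-1,
\end{align*}
so $\{\be_1,A\be_1,\dots,A^{n-1}\be_1\}$ and $\{\be_1,\dots,\be_n\}$ span the same space; thus $\be_1$ is cyclic and the minimal polynomial of $A$ has degree $n$. Feeding the case $j=n-1$ of the displayed identity into $A\be_n=(-1)^n\be_1$ yields $(A+A^2+\cdots+A^n)\be_1=-\be_1$, that is $p(A)\be_1=\bzero$ with $p(t):=1+t+\cdots+t^n$. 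Because $\be_1$ is cyclic this forces $p(A)=\bzero$, and since $p$ is monic of degree $n$ it is the minimal polynomial of $A$.

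To finish, $(t-1)p(t)=t^{n+1}-1$ together with $p(A)=\bzero$ gives $A^{n+1}-\Id=(A-\Id)p(A)=\bzero$, hence $A^{n+1}=\Id$ and $(A^{-1})^{n+1}=\Id$; this is~\eqref{eqProdAllDI}. For minimality, $A^N=\Id$ holds precisely when the minimal polynomial $p(t)$ divides $t^N-1$; as $p(t)=(t^{n+1}-1)/(t-1)$ has $e^{2\pi i/(n+1)}$, a number of multiplicative order $n+1$, among its roots, $p(t)\mid t^N-1$ forces $(n+1)\mid N$, so the smallest such $N$ is $n+1$, and the same holds for $A^{-1}=K_1K_2\cdots K_n$. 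The two places that need genuine care are the sign bookkeeping in the induction producing the displayed identity for $\be_{j+1}$ and the cyclic-vector step that upgrades $p(A)\be_1=\bzero$ to $p(A)=\bzero$; neither is hard, and I do not expect any real obstacle.
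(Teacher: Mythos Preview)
Your argument is correct. The explicit matrix forms follow from Lemmas~\ref{LemmaOrderedProducts} and~\ref{LemmaOrderedProductsReversed} exactly as you say, and the mutual-inverse observation reduces both statements to the single matrix $A=K_nK_{n-1}\cdots K_1$. Your cyclic-vector computation checks out: from $A\be_j=(-1)^j\be_1-\be_{j+1}$ one gets $\be_{j+1}=(-1)^j(\Id+A+\cdots+A^j)\be_1$ by the induction you indicate, so $\be_1$ is cyclic and the minimal polynomial has degree~$n$; plugging in $A\be_n=(-1)^n\be_1$ then yields $p(A)\be_1=\bzero$ with $p(t)=1+t+\cdots+t^n$, whence $p(A)=\bzero$, $A^{n+1}=\Id$, and the root $e^{2\pi i/(n+1)}$ forces minimality.

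The paper takes a genuinely different route for the order statement. After recording the matrix shapes, it defers the computation of the order to Section~\ref{SectionProofPropositionPodDI}, where it invokes the isomorphism $\cM(n)\cong S_{n+1}$ established in Section~\ref{SectionIsomorphismMS}: under $\Psi$, the product $K_1K_2\cdots K_n$ maps to the permutation $\pi=(1,2)(1,3)\cdots(1,n+1)$, and the paper verifies directly that $\pi$ is an $(n+1)$-cycle, hence of order exactly~$n+1$. Your approach is more elementary and self-contained, needing only linear algebra and no forward reference to the Coxeter-group machinery; the paper's approach, by contrast, exhibits the order statement as a corollary of the structural isomorphism, which ties it into the broader narrative of the article but makes the proof of this particular theorem logically dependent on later sections.
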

\begin{proof}\renewcommand{\qedsymbol}{} % delay the end of the proof
The formulas for the `down' and `up' products follow from Lemmas~\ref{LemmaOrderedProducts} and~\ref{LemmaOrderedProductsReversed}.
To prove the formula for the order, we write
\begin{align}\label{eqKseS}
  K_nK_{n-1}\cdots K_2K_1 = \be_1\br_1 - S, 
\end{align}
where $S$ is the sub-diagonal matrix:
\begin{align*}%\label{eqProdDI}
S &:= \sum_{j=1}^{n-1} \be_{j+1}\be_j^T =
\begin{pmatrix}
0 & 0 & \cdots & 0 & 0 \\
1 & 0 & \cdots & 0 & 0 \\
0 & 1 &  \cdots & 0 & 0 \\
% 0 & 0 & -1 & 0 & \cdots & 0 & 0 \\
\vdots & \vdots & \vdots & \vdots & \ddots  \\
% 0 & 0 & \cdots & 0 & 0 \\
0 & 0 & \cdots & 1 & 0 
\end{pmatrix}.
\end{align*}
Note that, by induction, we obtain: 
\begin{align}\label{eqPowers-ejrj}
  \left(\be_j\br_j\right)^k
  = (-1)^{k+1} \be_j\br_j, \text{ for $1\le j,k\le n$} 
\end{align}
and the powers of $S$ are the sub-diagonal matrices:
\begin{align}\label{eqPowers-S}
  S^k   = \sum_{\ell=k+1}^{n} \be_{\ell}\be_{\ell-k}^T, 
  \text{ for $1\le k\le n-1$ and $S^n = \bzero$.} 
\end{align}
The next natural step involves substituting~\eqref{eqPowers-ejrj} 
and~\eqref{eqPowers-S} into~\eqref{eqKseS}, raising the result to the power of $n+1$, and analyzing the result. 
This can be done with all the complications caused by the non-commutativity of the multiplications involved, but for a quicker approach, we will complete the proof of 
Theorem~\ref{PropositionPodDI} in Section~\ref{SectionProofPropositionPodDI}.
\end{proof}

In order to show the formula for calculating more general products, we need to introduce the following notations.
Let $\bj = \List{j_1,\dots,j_s}\in\{1,2,\dots,n\}^s$ 
denote an ordered set (also viewed as an~\mbox{$s$-tuple}) of indices between~$1$ and $n$, and let  
$\cD(\bj)$ denote the set of pairs of adjacent elements of $\bj$, that is,
\begin{align*}
   \cD(\bj) := \{
   (j_1,j_2),(j_2,j_3),\dots,(j_{s-1},j_s)
   \}.
\end{align*}
Note that $\cD(\bj)$ has $s-1$ elements and it is empty if $s=1$.

%%%%%%%%%%%%%%%%%%%%%55%%%%%%%%%%%%%
\begin{theorem}\label{TheoremOrderMany}
Let $s$ and $n$ be positive integers and suppose $1\le s\le n$.
Let $\bj = \List{j_1,\dots,j_s}$ be
an $s$-tuple with distinct components from $\{1,2,\dots,n\}$.
Then
\begin{align}\label{eqProductlMany}
   K_{j_1}K_{j_2}\cdots K_{j_s}
    =\Id 
-\sum_{j\in\bj}\be_j\be_j^T
+\sum_{(j,\ell)\in \cD(\bj)}(-1)^{j+\ell}\be_j\be_{\ell}^T
+ \be_{j_s}\br_{j_s}.
\end{align}
\end{theorem}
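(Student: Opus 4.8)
The plan is to prove~\eqref{eqProductlMany} by induction on $s$, mirroring the argument of Lemma~\ref{LemmaOrderedProducts} but keeping track of the combinatorial set $\cD(\bj)$ instead of the consecutive pairs $(j,j+1)$. The base case $s=1$ is just the definition~\eqref{eqDefKj} (with $\cD(\bj)=\varnothing$), and the case $s=2$ is exactly Lemma~\ref{LemmaFormulaKjl}, since for $\bj=\List{j_1,j_2}$ the single pair $(j_1,j_2)$ contributes $(-1)^{j_1+j_2}\be_{j_1}\be_{j_2}^T$ and $\be_{j_s}\br_{j_s}=\be_{j_2}\br_{j_2}$.

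For the inductive step, assume~\eqref{eqProductlMany} holds for the $s$-tuple $\bj=\List{j_1,\dots,j_s}$, and let $\bj'=\List{j_1,\dots,j_s,j_{s+1}}$ with $j_{s+1}$ distinct from all previous entries. I would multiply the induction hypothesis on the right by $K_{j_{s+1}} = \Id - \be_{j_{s+1}}\be_{j_{s+1}}^T + \be_{j_{s+1}}\br_{j_{s+1}}$ and expand. The terms $-\sum_{j\in\bj}\be_j\be_j^T$ and $+\sum_{(j,\ell)\in\cD(\bj)}(-1)^{j+\ell}\be_j\be_\ell^T$ multiplied by $\Id$ survive unchanged; the new term $-\be_{j_{s+1}}\be_{j_{s+1}}^T$ extends the first sum to $\sum_{j\in\bj'}\be_j\be_j^T$; and the target formula also needs the new pair $(j_s,j_{s+1})$ added to $\cD(\bj')$ and the tail term changed from $\be_{j_s}\br_{j_s}$ to $\be_{j_{s+1}}\br_{j_{s+1}}$. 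Everything else must cancel. The cross terms fall into two groups. First, since all $j_i$ are distinct, Remark~\ref{Remark_G}.\ref{Remark_GD} kills every product $\be_{j_i}^T\be_{j_{s+1}}$ with $i\le s$ and every $\be_\ell^T\be_{j_{s+1}}$ coming from a pair $(j,\ell)\in\cD(\bj)$ (as $\ell\in\bj$), so all products involving $\be_j\be_j^T$ or the $\cD(\bj)$-sum against $-\be_{j_{s+1}}\be_{j_{s+1}}^T$ or $\be_{j_{s+1}}\br_{j_{s+1}}$ vanish. What remains is the interaction of the tail $\be_{j_s}\br_{j_s}$ with $K_{j_{s+1}}$: using Remark~\ref{Remark_G}.\ref{Remark_GA}, $\br_{j_s}\be_{j_{s+1}} = (-1)^{j_s+j_{s+1}-1}(1)$, so $-\be_{j_s}\br_{j_s}\be_{j_{s+1}}\be_{j_{s+1}}^T = (-1)^{j_s+j_{s+1}}\be_{j_s}\be_{j_{s+1}}^T$, which supplies precisely the missing $\cD(\bj')$-term, and by Remark~\ref{Remark_G}.\ref{Remark_GE} (or again~\ref{Remark_GA}) $\be_{j_s}\br_{j_s}\be_{j_{s+1}}\br_{j_{s+1}} = -\be_{j_s}\br_{j_s}$, which cancels the leftover $+\be_{j_s}\br_{j_s}$; the term $+\be_{j_{s+1}}\br_{j_{s+1}}$ from $\be_{j_s}\br_{j_s}\cdot\Id$ — wait, more carefully, the $\be_{j_{s+1}}\br_{j_{s+1}}$ tail of the target comes from $\Id\cdot\be_{j_{s+1}}\br_{j_{s+1}}$ in the product. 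Assembling these yields exactly~\eqref{eqProductlMany} for $\bj'$.

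The one point requiring genuine care — and the only real obstacle — is the bookkeeping of which cross terms are genuinely zero versus which combine: one must check that \emph{every} product of two non-identity pieces either vanishes by distinctness (Remark~\ref{Remark_G}.\ref{Remark_GD}) or is one of the three tail interactions above, with no stray surviving term. Because the entries of $\bj$ are pairwise distinct, there is no analogue of the delicate parity cancellation $\br_s+(-1)^{s+\ell-1}\br_\ell = \bzero$ that appeared in Lemma~\ref{LemmaFormulaKjl}; here the only row that gets overwritten by an $\br$-vector at each stage is row $j_{s}$ (then $j_{s+1}$), and it is replaced cleanly, so the induction is, in fact, cleaner than the special cases. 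I would close by remarking that Lemmas~\ref{LemmaOrderedProducts} and~\ref{LemmaOrderedProductsReversed} are the instances $\bj=\List{1,2,\dots,s}$ and $\bj=\List{s,s-1,\dots,1}$ (and their top-index analogues) of this theorem, so Theorem~\ref{TheoremOrderMany} subsumes them.
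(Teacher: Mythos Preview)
Your proposal is correct and follows essentially the same approach as the paper: induction on $s$ with base cases $s=1,2$, right-multiplication of the induction hypothesis by $K_{j_{s+1}}$, elimination of all cross terms via Remark~\ref{Remark_G}.\ref{Remark_GD} (distinctness of indices), and handling the three surviving tail interactions via Remarks~\ref{Remark_G}.\ref{Remark_GA} and~\ref{Remark_GE}. The paper also records (at the start rather than the end) that the ordered-product Lemmas~\ref{LemmaOrderedProducts} and~\ref{LemmaOrderedProductsReversed} are special instances of~\eqref{eqProductlMany}.
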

\begin{proof}
  We start by observing that if $s=1$, then $\bj=\List{j}$ has only one element, 
which makes the second sum in~\eqref{eqProductlMany} empty, and the formula reduces 
to definition~\eqref{eqDefKj}.
If $s=2$, then \mbox{$\bj=\List{j,\ell}$}, and~\eqref{eqProductlMany} reduces to
formula~\eqref{eqLemmaFormulaKjl}.
If $\bj=\List{1,2,\dots,s}$
or \mbox{$\bj=\List{n,n-1,\dots,n-s+1}$}, then~\eqref{eqProductlMany} reduces to 
formula~\eqref{eqProductUP} or~\eqref{eqProductDown}.
Likewise, when the tuples are the reversed 
\mbox{$\bj=\List{s, s-1,\dots,1}$}
or $\bj=\List{n-s+1,\dots,n-1,n}$, 
then~\eqref{eqProductUPReversed} and~\eqref{eqProductDownReversed} 
are also covered by formula~\eqref{eqProductlMany}.

In order to prove~\eqref{eqProductlMany} on $s$,
once the initial cases have been verified, let us assume 
that it holds for a fixed $s$. 
Let $\bj'=\List{j_1,\dots,j_s,k}$, 
where $k$ is distinct from all $j_1, j_2,\dots, j_s$.
Then
\begin{align*}%\label{eqProductlMany}
   K_{j_1}K_{j_2}\cdots K_{j_s} K_{k}
    & =
    \left(\Id 
-\sum_{j\in\bj}\be_j\be_j^T
+\sum_{(j,\ell)\in \cD(\bj)}(-1)^{j+\ell}\be_j\be_{\ell}^T
+ \be_{j_s}\br_{j_s}\right)
\left(\Id - \be_{k} \be_{k}^T 
  + \be_{k}\br_{k}\right)\\
   &=\Id 
-\sum_{j\in\bj}\be_j\be_j^T
+\sum_{(j,\ell)\in \cD(\bj)}(-1)^{j+\ell}\be_j\be_{\ell}^T
+ \be_{j_s}\br_{j_s}\\
&\quad 
 - \be_{k} \be_{k}^T 
+\sum_{j\in\bj}\be_j(\be_j^T \be_{k}) \be_{k}^T 
-\sum_{(j,\ell)\in \cD(\bj)}(-1)^{j+\ell}\be_j(\be_{\ell}^T \be_{k}) \be_{k}^T 
- \be_{j_s}\br_{j_s} \be_{k} \be_{k}^T \\
&\quad 
 + \be_{k}\br_{k}
-\sum_{j\in\bj}\be_j(\be_j^T \be_{k})\br_{k}
+\sum_{(j,\ell)\in \cD(\bj)}(-1)^{j+\ell}\be_j(\be_{\ell}^T \be_{k})\br_{k}
+ \be_{j_s}\br_{j_s}\be_{k}\br_{k}.
\end{align*}
Since $k$ is distinct from all the components of $\bj$, the multiple sums from the last two lines above cancel out, according to Remark~\ref{Remark_G}.\ref{Remark_GD}.
Upon completing the expressions and rearranging the terms, it follows
\begin{align*}%\label{eqProductlMany}
   K_{j_1}K_{j_2}\cdots K_{j_s} K_{k}
    & = \Id 
-\sum_{j\in\bj'}\be_j\be_j^T
+\sum_{(j,\ell)\in \cD(\bj')}(-1)^{j+\ell}\be_j\be_{\ell}^T
+ \be_{k}\br_{k}\\
&\quad 
+ \be_{k} \be_{k}^T + (-1)^{j_s+k+1}\be_{j_s}\be_{k}^T\\
&\quad 
- \be_{k} \be_{k}^T - \be_{j_s}\br_{j_s} \be_{k} \be_{k}^T \\
&\quad
+ \be_{j_s}\br_{j_s} + \be_{j_s}\br_{j_s}\be_{k}\br_{k}.
\end{align*}

To conclude the proof, we only need to show that the following sum cancels out:
\begin{align*}
 \Sigma := (\be_{k} \be_{k}^T-\be_{k} \be_{k}^T) 
 +  \left((-1)^{j_s+k+1}\be_{j_s}\be_{k}^T 
       - \be_{j_s}\br_{j_s} \be_{k} \be_{k}^T\right)
 + (\be_{j_s}\br_{j_s} + \be_{j_s}\br_{j_s}\be_{k}\br_{k}).
 % = \bzero,
\end{align*}
By Remark~\ref{Remark_G}.\ref{Remark_GA}, 
we know that $\br_{j_s} \be_{k}=(-1)^{j_s+k-1}=(1)$, so that the second
parentheses cancel out.
Further, by Remark~\ref{Remark_G}.\ref{Remark_GE}, 
we know $ \be_{j_s}\br_{j_s}\be_{k}\br_{k}=-\be_{j_s}\br_{j_s}$,
which implies that the third parentheses also cancel out.
In conclusion, $\Sigma=\bzero$.

This concludes the proof of Theorem~\ref{TheoremOrderMany}.
\end{proof}

%%%%%%%%%%%%%%%%%%%%%%%%%%%%%%%%%%%%%%%%%%%%%%%%%%%%%%%%%%%
\section{\texorpdfstring{The set of matrices $\cM(n)$}{The set of matrices M(n)}}

The multiplication of matrices with $K_j$ in general does not commute, 
and the effect is quite different depending on whether the multiplication 
is done on the left or on the right. 
For instance, here are the transformations produced to a random matrix $M$:
\begin{align*}
  M =  
  \begin{pmatrix}
    5 & 2 & 1 & 5 & 3 \\
    5 & 5 & 4 & 4 & 5 \\
    4 & 1 & 1 & 2 & 1 \\
    2 & 4 & 1 & 5 & 4 \\
    2 & 5 & 2 & 3 & 1 
  \end{pmatrix}:\ \ 
    MK_3 =  
  \begin{pmatrix}
    4 & 3 & -1 & 6 & 2 \\
    1 & 9 & -4 & 8 & 1 \\
    3 & 2 & -1 & 3 & 0 \\
    1 & 5 & -1 & 6 & 3 \\
    0 & 7 & -2 & 5 & -1
  \end{pmatrix},\ \ 
      K_3M =  
  \begin{pmatrix}
    5 & 2 & 1 & 5 & 3 \\
    5 & 5 & 4 & 4 & 5 \\
    -4 & 1 & 1 & -1 & 4 \\
    2 & 4 & 1 & 5 & 4 \\
    2 & 5 & 2 & 3 & 1
  \end{pmatrix}.
\end{align*}

We can see that what holds true in general, and follows directly 
from the definition of~$K_j$, is that multiplying 
by $K_j$ on the right disrupts all columns, 
while multiplying by $K_j$ on the left changes line $j$ in $M$ 
while keeping all others intact.

We let $\scrG_n(K):=\langle K_1,K_2,\dots,K_n\rangle$ denote the 
finite group of matrices generated by $K_1,K_2,\dots,K_n$, the operation
being matrix multiplication.

Checking the cardinality of $\scrG_n(K)$ for small $n$ using the properties from Section~\ref{SectionLemmas}, we find:
$\#\scrG_2(K)=6$, $\#\scrG_3(K) = 24$. 
Also, one is led to conclude that each product of $K_j$'s is a matrix that contains only $0$'s and $\pm 1$'s. This suggests that
$\#\scrG_2(K)$ is finite and contains at most $3^{n^2}$ elements.

 Moreover, a more precise identification of these matrices seems plausible. We observed that matrices of the following type appear as products of matrices in $\scr{G}_n(K)$.

Let $\sigma$ be a permutation in $S_n$, $h\in\{1,2,\dots,n\}$,
and $\epsilon\in\{0,1\}$. Then we let $M(\sigma,h,\epsilon)$
be the $n\times n$ matrix whose entries are defined by:
\begin{align*}
  m_{j,k}:=
  \begin{cases}
    (-1)^{j+k}, & \text{ if $\epsilon = 0$ and $k = \sigma(j)$;}\\[3pt]
    0, & \text{ if $\epsilon = 0$ and $k\neq \sigma(j)$;}\\[3pt] (-1)^{j+k}, & \text{ if $\epsilon = 1$, $j\neq h$, and $k = \sigma(j)$;}\\[3pt]
    0, & \text{ if $\epsilon = 1$, $j\neq h$,  and $k\neq \sigma(j)$;}\\[3pt] 
    (-1)^{j+k-1}, & \text{ if $\epsilon = 1$, $j = h$.}
  \end{cases}
\end{align*}
Piecing together the non-zero entries,  
$M(\sigma,h,\epsilon)$ is written as:
\begin{align}\label{eqDefMsih}
  M(\sigma,h,\epsilon):=
   \sum_{j=1}^{n}(-1)^{j+\sigma(j)}\be_j\be_{\sigma(j)}^T
+\epsilon\left(
 (-1)^{h+\sigma(h)+1}\be_h\be_{\sigma(h)}^T
 + \sum_{k=1}^{n}(-1)^{h+k-1}\be_h\be_{k}^T
\right).
\end{align}
We let $\cM(n)$ denote the set of all these matrices:
\begin{align*}
    \cM(n) :=\big\{  M(\sigma,h,\epsilon) :
    \sigma\in S_n,\ 1\le h \le n,\ \epsilon\in\{0,1\}\big\}.
\end{align*}
We notice the inclusion 
$\{K_1,K_2,\dots,K_n\}\subset\cM(n)$, since
$K_h = M(\id,h,1)$, where $\id$ is the identity permutation in $S_n$.
%%%%%%%%%%%%%%%%%%%%%%%%%%%%%%%%%%%%%%
In the definition~\eqref{eqDefMsih} of $M(\sigma,h,\epsilon)$, the terms play the following roles. 
Starting with the zero matrix of order $n\times n$,
the first sum places $\pm 1$'s at position~$jk$ 
in such a way that exactly one non-zero element appears 
on each line and on each column. 
Then, if $\epsilon=1$, the first term in the parentheses that 
follows cancels the non-zero component just placed on row $h$,
and the subsequent sum replaces line $h$ with the alternating 
sequence of $\pm 1$'s, starting with $(-1)^h$.
Anyhow, $\cM(n)\subset GL(n,\ZZ)$ and
with a hint of intuition, $M(\sigma,h,\epsilon)$ 
can also be written as
\begin{align}\label{eqDefMsih2}
  M(\sigma,h,\epsilon):= M(\sigma,h,0)
  + \epsilon\left((-1)^{h+\sigma(h)+1}
 \be_h\be_{\sigma(h)}^T
 + \be_h\br_{h}
\right).
\end{align}

Further analysis of other properties is the object of the next theorem.
\begin{theorem}\label{LemmaMsihMultiplication}
For any permutations $\sigma,\tau\in S_n$ and any integers 
$ u,v,w$ satisfying $1\le u,v,w\le n$, we have:
\begin{align}
  M(\sigma,u,0)M(\tau,v,0) &= M(\tau\sigma ,w,0),
  \label{eqMultiplication00A}\\
  M(\sigma, u, 1)M(\tau, v, 0) &= M(\tau\sigma, u, 1),
  \label{eqMultiplication00B}\\
M(\sigma, u, 0)M(\tau, v, 1) &= M(\tau\sigma, \sigma^{-1}(v), 1),
\label{eqMultiplication00C}\\
M(\sigma, u, 1)M(\tau, v, 1) &= 
\begin{cases}
    M( \tau\sigma, w, 0), &
    \text{ if } \sigma^{-1}(v) = u;\\
    M(\eta, \sigma^{-1}(v), 1), &
    \text{ if } \sigma^{-1}(v)\neq u,
\end{cases}
\label{eqMultiplication00D}
\end{align}
where 
\begin{align}\label{eqeta}
\eta(j) = 
    \begin{cases}    
    \tau\sigma(j),  & \text{if } 
    j \in\{1,2,\dots,n\}\setminus\{u,\sigma^{-1}(v)\}; \\   
    \tau\sigma(u),  & \text{if } j = \sigma^{-1}(v);\\
    \tau\sigma(\sigma^{-1}(v)),  & \text{if } j = u.
     \end{cases}
\end{align}
\end{theorem}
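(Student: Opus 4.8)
The plan is to verify the four multiplication formulas directly by expanding both sides using the representation~\eqref{eqDefMsih2}, which isolates the ``permutation part'' $M(\sigma,h,0)$ from the ``alternating-line correction'' $\epsilon\big((-1)^{h+\sigma(h)+1}\be_h\be_{\sigma(h)}^T + \be_h\br_h\big)$. The basic workhorse will be the product rule $\be_j\be_k^T\cdot\be_p\be_q^T = (\be_k^T\be_p)\,\be_j\be_q^T$, which by Remark~\ref{Remark_G}.\ref{Remark_GB},~\ref{Remark_GD} equals $\be_j\be_q^T$ when $k=p$ and the zero matrix otherwise, together with the identities $\be_k^T\be_p = \delta_{kp}$ and $\br_h\be_p = (-1)^{h+p-1}$ from Remark~\ref{Remark_G}.\ref{Remark_GA}. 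I would first establish~\eqref{eqMultiplication00A} as the foundational case: writing $M(\sigma,u,0) = \sum_j (-1)^{j+\sigma(j)}\be_j\be_{\sigma(j)}^T$ and similarly for $M(\tau,v,0)$, the product telescopes because $\be_{\sigma(j)}^T\be_k$ survives only when $k=\sigma(j)$, giving $\sum_j (-1)^{j+\sigma(j)}(-1)^{\sigma(j)+\tau\sigma(j)}\be_j\be_{\tau\sigma(j)}^T = \sum_j (-1)^{j+\tau\sigma(j)}\be_j\be_{\tau\sigma(j)}^T = M(\tau\sigma,w,0)$, the sign at the intermediate index squaring away and $w$ being irrelevant since $\epsilon=0$.

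Next I would handle~\eqref{eqMultiplication00B} and~\eqref{eqMultiplication00C}, which each involve exactly one alternating line. For~\eqref{eqMultiplication00B}, $M(\sigma,u,1) = M(\sigma,u,0) + \big((-1)^{u+\sigma(u)+1}\be_u\be_{\sigma(u)}^T + \be_u\br_u\big)$; multiplying on the right by $M(\tau,v,0)$, the first summand gives $M(\tau\sigma,\cdot,0)$ by~\eqref{eqMultiplication00A}, the term $(-1)^{u+\sigma(u)+1}\be_u\be_{\sigma(u)}^T\cdot M(\tau,v,0)$ collapses to $(-1)^{u+\tau\sigma(u)+1}\be_u\be_{\tau\sigma(u)}^T$ exactly as in the previous paragraph, and $\be_u\br_u\cdot M(\tau,v,0) = \be_u\big(\sum_k(-1)^{u+k-1}\be_k^T\big)\sum_j(-1)^{j+\tau(j)}\be_j\be_{\tau(j)}^T$; the inner sum picks out $k=j$, leaving $\sum_j (-1)^{u+j-1}(-1)^{j+\tau(j)}\be_u\be_{\tau(j)}^T = \sum_j(-1)^{u+\tau(j)-1}\be_u\be_{\tau(j)}^T = \be_u\br_u$ since $j\mapsto\tau(j)$ is a bijection. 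Assembling, the right factor's correction persists on line $u$ with the permutation now $\tau\sigma$, i.e.\ $M(\tau\sigma,u,1)$. Formula~\eqref{eqMultiplication00C} is the mirror image: here the correction sits on the \emph{right} factor at line $v$, and left-multiplication by $M(\sigma,u,0)$ moves the single nonzero entry of column $v$ — which lies in row $\sigma^{-1}(v)$ — so the corrected line migrates to row $\sigma^{-1}(v)$; the computation $M(\sigma,u,0)\cdot\be_v\br_v = \big(\sum_j(-1)^{j+\sigma(j)}\be_j\be_{\sigma(j)}^T\big)\be_v\br_v$ selects $\sigma(j)=v$, i.e.\ $j=\sigma^{-1}(v)$, yielding $(-1)^{\sigma^{-1}(v)+v}\be_{\sigma^{-1}(v)}\br_v$, and one checks $\br_v$ versus $\br_{\sigma^{-1}(v)}$ differ by the scalar $(-1)^{v - \sigma^{-1}(v)}$ so the signs reconcile to produce exactly $\be_{\sigma^{-1}(v)}\br_{\sigma^{-1}(v)}$ after combining with the $(-1)^{\sigma^{-1}(v)+v}$ prefactor; together with~\eqref{eqMultiplication00A} for the permutation parts and the analogous collapse of the $(-1)^{v+\tau(v)+1}\be_v\be_{\tau(v)}^T$ term, this gives $M(\tau\sigma,\sigma^{-1}(v),1)$.

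The main obstacle, and the case I would spend the most care on, is~\eqref{eqMultiplication00D}, the product of two alternating-line matrices. Writing both factors via~\eqref{eqDefMsih2}, the product expands into four blocks: $M(\sigma,u,0)M(\tau,v,0)$ (handled), $M(\sigma,u,0)$ times the right correction (handled by~\eqref{eqMultiplication00C}'s computation, moving a copy of the corrected line to row $\sigma^{-1}(v)$), the left correction times $M(\tau,v,0)$ (handled by~\eqref{eqMultiplication00B}'s computation, keeping a corrected-type line at row $u$), and the genuinely new cross-term: the left correction times the right correction, i.e.\ $\big((-1)^{u+\sigma(u)+1}\be_u\be_{\sigma(u)}^T + \be_u\br_u\big)\big((-1)^{v+\tau(v)+1}\be_v\be_{\tau(v)}^T + \be_v\br_v\big)$. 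This expands by bilinearity; the key scalar contractions are $\be_{\sigma(u)}^T\be_v = \delta_{\sigma(u),v}$ (equivalently $\delta_{u,\sigma^{-1}(v)}$) and $\br_u\be_v = (-1)^{u+v-1}$, and one also needs $\be_u\br_u\cdot\be_v\br_v = (\br_u\be_v)\be_u\br_v = (-1)^{u+v-1}\be_u\br_v$, which should be compared with Remark~\ref{Remark_G}.\ref{Remark_GE}. When $\sigma^{-1}(v) = u$ (so $v=\sigma(u)$), rows $u$ and $\sigma^{-1}(v)$ coincide and the two corrected lines overlap on the \emph{same} row; I expect the $\br_u$-type contributions and the single-entry contributions to cancel the alternating line entirely and restore the $\pm1$ entry of the pure permutation matrix $M(\tau\sigma,w,0)$ — this is exactly the mechanism behind $K_j^2 = \Id$ in Lemma~\ref{LemmaInvolutionK}, now with permutations attached, so I would model the bookkeeping on that proof. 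When $\sigma^{-1}(v)\neq u$, the two corrected rows are distinct: row $u$ inherits the behavior that its original permutation entry $\sigma(u)$ gets routed to $\tau\sigma(u)$ but the line there is now a plain $\pm1$ (the alternating structure on the \emph{left} factor's row $u$ is destroyed by meeting the right factor's line $v$ unless $v=\sigma(u)$), while row $\sigma^{-1}(v)$ carries the surviving alternating line. The net effect is that the output permutation $\eta$ agrees with $\tau\sigma$ except that the \emph{values} at $u$ and $\sigma^{-1}(v)$ are swapped, precisely as in~\eqref{eqeta}, and the $\epsilon=1$ marker sits at $\sigma^{-1}(v)$; verifying that every sign $(-1)^{(\text{various})}$ lands correctly after these transpositions — especially reconciling $\br_{u}$ with $\br_{\sigma^{-1}(v)}$ and tracking the three parity-shift conventions $(-1)^{j+k}$, $(-1)^{j+k-1}$, $(-1)^{h+\sigma(h)+1}$ in~\eqref{eqDefMsih} — is the delicate part, and I would do it by checking the entry in each row class ($j\notin\{u,\sigma^{-1}(v)\}$, $j=u$, $j=\sigma^{-1}(v)$) separately against the definition of $M(\eta,\sigma^{-1}(v),1)$.
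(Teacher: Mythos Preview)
Your proposal is correct and follows essentially the same approach as the paper's proof: both proceed by expanding the factors via~\eqref{eqDefMsih2}, establish~\eqref{eqMultiplication00A} first as the base case, then treat~\eqref{eqMultiplication00B} and~\eqref{eqMultiplication00C} by letting the correction term interact with a pure permutation matrix (using the same key computations $\be_u\br_u\cdot M(\tau,v,0)=\be_u\br_u$ and $M(\sigma,u,0)\cdot\be_v\br_v=(-1)^{\sigma^{-1}(v)+v}\be_{\sigma^{-1}(v)}\br_v$), and finally split~\eqref{eqMultiplication00D} into the cases $\sigma(u)=v$ and $\sigma(u)\neq v$ with a term-by-term sign reconciliation. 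One small slip in your heuristic for the case $\sigma^{-1}(v)\neq u$: you write that row $u$'s entry ``gets routed to $\tau\sigma(u)$'', but in fact the surviving single entry in row $u$ sits at column $\eta(u)=\tau(v)$ --- you state $\eta$ correctly immediately afterward, and your plan to verify row-by-row against $M(\eta,\sigma^{-1}(v),1)$ is exactly what the paper does, so this does not affect the argument.
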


%%%%%%%%%%%%%%%%%%%%%%%%%%%%%%%%%%%%%%%%%%%%%%%%%%%%
\begin{proof}
\textbf{\texttt{1.}} We have
\begin{align*}
  M(\sigma,u,0)M(\tau,v,0) &=
 \sum_{j=1}^{n} \sum_{\ell=1}^{n}
 (-1)^{(j+\sigma(j))+(\ell+\tau(\ell))}
 \be_j(\be_{\sigma(j)}^T \be_\ell)\be_{\tau(\ell)}^T.
\end{align*}
Since, by Remark~\ref{Remark_G}~\ref{Remark_GD}, we know that 
$\be_{\sigma(j)}^T \be_\ell=(0)$ unless $\ell=\sigma(j)$,
in which case $\be_{\sigma(j)}^T \be_\ell=(1)$,
we derive
\begin{align*}
  M(\sigma,u,0)M(\tau,v,0) &=
 \sum_{j=1}^{n} 
 (-1)^{(j+\sigma(j))+(\sigma(j)+\tau(\sigma(j)))}
 \be_j(\be_{\sigma(j)}^T \be_{\sigma(j)})\be_{\tau(\sigma(j))}^T\\
  &=
 \sum_{j=1}^{n} 
 (-1)^{j+\tau(\sigma(j))}
 \be_j\be_{\tau(\sigma(j))}^T,
\end{align*}
which verifies formula~\eqref{eqMultiplication00A}.

\medskip
\noindent
\textbf{\texttt{2.}} Next, by~\eqref{eqDefMsih2} and~\eqref{eqMultiplication00A}, we derive:
\begin{equation}\label{eqM12}
\begin{split}
  M(\sigma,u,1)M(\tau,v,0) &= 
  \left(M(\sigma,u,0) +(-1)^{u+\sigma(u)+1} \be_u\be_{\sigma(u)}^T + \be_u\br_u\right) 
  M(\tau,v,0)\\
  & = M(\tau\sigma,u,0)  -(-1)^{u+\sigma(u)} \be_u\be_{\sigma(u)}^T M(\tau,v,0)
  + \be_u\br_u M(\tau,v,0),
  \end{split}
\end{equation}
as in~\eqref{eqMultiplication00A}, the parameter $w$ has no impact,
it can be any, specifically, it can be chosen as $u$.

Further, we have
\begin{equation}\label{eqM12a}
  \begin{split}  (-1)^{u+\sigma(u)+1}\be_u\be_{\sigma(u)}^T M(\tau,v,0) &= 
  \sum_{\ell=1}^{n}(-1)^{u+\sigma(u)+\ell+\tau(\ell)+1}\be_u(\be_{\sigma(u)}^T\be_\ell)\be_{\tau(\ell)}^T\\
   &= 
   (-1)^{u+\tau(\sigma(u))+1} \be_u \be_{\tau(\sigma(u))}^T.
  \end{split}
\end{equation}
And the last term in~\eqref{eqM12} is:
\begin{equation}\label{eqM12b}
  \begin{split}
   \be_u\br_u M(\tau,v,0) &= 
  \sum_{\ell=1}^{n}(-1)^{\ell+\tau(\ell)}\be_u(\br_u\be_\ell)\be_{\tau(\ell)}^T\\
   &=  \sum_{\ell=1}^{n}(-1)^{(\ell+\tau(\ell))+(u+\ell-1)}
   \be_u\be_{\tau(\ell)}^T\\
   &=  \sum_{\ell=1}^{n}
   (-1)^{u+\tau(\ell)-1}
   \be_u\be_{\tau(\ell)}^T\\
   &=  \sum_{k=1}^{n}
   (-1)^{u+k-1}
   \be_u\be_{k}^T\\
   &= \be_u\br_u.
  \end{split}
\end{equation}
Then, upon inserting~\eqref{eqM12a} and~\eqref{eqM12b} 
into~\eqref{eqM12}, we find~\eqref{eqMultiplication00B}.

\medskip
\noindent
\textbf{\texttt{3.}}In order to establish~\eqref{eqMultiplication00C}, the first part of the calculation is similar:
\begin{equation}\label{eqM13}
\begin{split}
  M(\sigma,u,0)M(\tau,v,1) &= 
  M(\sigma,u,0)
  \left( M(\tau,v,0)+(-1)^{v+\tau(v)+1} \be_v\be_{\tau(v)}^T + \be_v\br_v\right) \\
  & = M(\tau\sigma,\sigma^{-1}(v),0)  \\
  &\quad +(-1)^{v+\tau(v)+1} M(\sigma,u,0)\be_v\be_{\tau(v)}^T 
  + M(\sigma,u,0)\be_v\br_v.
  \end{split}
\end{equation}
Here, we have again used the observation that in~\eqref{eqMultiplication00A}, the parameter $w$ has no influence,
it can be any, in particular, it can be taken as $\sigma^{-1}(v)$.

The first term on the last line of the equation~\eqref{eqM13} is:
\begin{equation}\label{eqM13a}
  \begin{split}  
  (-1)^{v+\tau(v)+1}M(\sigma,u,0)\be_v\be_{\tau(v)}^T  
  &= 
  \sum_{j=1}^{n}(-1)^{v+\tau(v)+j+\sigma(j)+1}\be_j(\be_{\sigma(j)}^T\be_v)\be_{\tau(v)}^T\\
   &= 
   (-1)^{\tau(v)+\sigma^{-1}(v)+1} \be_{\sigma^{-1}(v)} \be_{\tau(v)}^T,
  \end{split}
\end{equation}
and the second is:
\begin{equation}\label{eqM13b}
  \begin{split}  
  M(\sigma,u,0)\be_v\br_v  
  &= 
  \sum_{j=1}^{n}(-1)^{j+\sigma(j)}\be_j(\be_{\sigma(j)}^T\be_v)\br_v \\
   &= (-1)^{\sigma^{-1}(v)+v}\be_{\sigma^{-1}(v)}\br_v.
  \end{split}
\end{equation}
Now, let us observe that the first term in the parenthesis on the right-hand side of formula~\eqref{eqDefMsih2} 
for $M(\tau\sigma,\sigma^{-1}(v),1)$ equals
\begin{equation}\label{eqM13aa}
  \begin{split}  
  (-1)^{\sigma^{-1}(v)+\tau\sigma(\sigma^{-1}(v))+1}
  \be_{\sigma^{-1}(v)}\be_{\tau\sigma(\sigma^{-1}(v))}^T  
  &= 
  (-1)^{\sigma^{-1}(v)+\tau(v)+1}
  \be_{\sigma^{-1}(v)}\be_{\tau(v)}^T, 
  \end{split}
\end{equation}
which coincides with the outcome in~\eqref{eqM13a}.
Besides, the second term in the parenthesis on the right-hand side of formula~\eqref{eqDefMsih2} 
for $M(\tau\sigma,\sigma^{-1}(v),1)$ equals
\begin{equation}\label{eqM13bb}
  \begin{split}  
  \be_{\sigma^{-1}(v)}\br_{\sigma^{-1}(v)}
  & =
 \be_{\sigma^{-1}(v)}
 \left((-1)^{\sigma^{-1}(v)+v}\br_{v}\right)
=(-1)^{\sigma^{-1}(v)+v}\be_{\sigma^{-1}(v)}\br_v,
  \end{split}
\end{equation}
that is, the same result as the one in~\eqref{eqM13b}. 
Therefore, by replacing the results from~\eqref{eqM13aa} into~\eqref{eqM13a} 
and from~\eqref{eqM13bb} into~\eqref{eqM13b} 
in place of those from~\eqref{eqM13}, we obtain exactly formula~\eqref{eqMultiplication00C}.

\medskip
\noindent
\textbf{\texttt{4.}}
To sum up, in order to prove~\eqref{eqMultiplication00D}, we employ~\eqref{eqDefMsih2} with the purpose
to make use of~\eqref{eqMultiplication00A}-\eqref{eqMultiplication00C}.
Proceeding, we have:
\begin{equation}\label{eqM13A}
\begin{split}
  M(\sigma,u,1)M(\tau,v,1) &= 
  M(\sigma,u,1)
  \left( M(\tau,v,0)+(-1)^{v+\tau(v)+1} \be_v\be_{\tau(v)}^T + \be_v\br_v\right) \\
  & = M(\tau\sigma,u,1)  \\
  &\quad +(-1)^{v+\tau(v)+1} M(\sigma,u,1)\be_v\be_{\tau(v)}^T 
  + M(\sigma,u,1)\be_v\br_v.
  \end{split}
\end{equation}

 Using~\eqref{eqDefMsih2}, the first term on the last line above equals
\begin{equation}\label{eqM14a}
\begin{split}
 (-1)^{v+\tau(v)+1} M(\sigma,u,1)\be_v\be_{\tau(v)}^T &=
 (-1)^{v+\tau(v)+1}M(\sigma,u,0)\be_v\be_{\tau(v)}^T\\
 &\quad
 +(-1)^{v+\tau(v)+u+\sigma(u)} \be_u(\be_{\sigma(u)}^T\be_v)\be_{\tau(v)}^T\\
  &\quad
  -(-1)^{v+\tau(v)}\be_u(\br_{u}\be_v)\be_{\tau(v)}^T,
  \end{split}
\end{equation}
while the second term on the last line is %of~\eqref{eqM14} is
\begin{equation}\label{eqM14b}
\begin{split}
 M(\sigma,u,1)\be_v\br_v &=
  M(\sigma,u,0)\be_v\br_v 
  + (-1)^{u+\sigma(u)+1}  \be_u(\be_{\sigma(u)}^T \be_v)\br_v 
 + \be_u(\br_{u} \be_v)\br_v.
  \end{split}
\end{equation}

Since the final result depends on the value of the associated factors 
in~\eqref{eqM14a} and~\eqref{eqM14b}, 
we need to separate the discussion into two cases: when $u=\sigma^{-1}(v)$ 
and when $u\neq \sigma^{-1}(v)$.

%%%%%%%%%%%%%%%%%%%%%%%%%%%%%%%%%%%%%%%%%
\medskip
\noindent
\textbf{\texttt{4.a.}} The case \fbox{$u=\sigma^{-1}(v)$}.

Employing~\eqref{eqM13a}, relation~\eqref{eqM14a} reduces to
\begin{equation*}%\label{eqM14aa}
\begin{split}
 (-1)^{v+\tau(v)+1} M(\sigma,u,1)\be_v\be_{\tau(v)}^T &=
 % (-1)^{v+\tau(v)+1}M(\sigma,u,0)\be_v\be_{\tau(v)}^T\\
 (-1)^{\tau(v)+\sigma^{-1}(v)+1} \be_{\sigma^{-1}(v)} \be_{\tau(v)}^T\\
 &\quad
 +(-1)^{\tau(\sigma(u))+u} \be_u\be_{\tau(\sigma(u))}^T\\
  &\quad
  -(-1)^{(v+\tau(v))+(u-1+v)}\be_u \cdot (1)\cdot \be_{\tau(v)}^T.
  \end{split}
\end{equation*}
Knowing that $v = \sigma(u)$, this is an algebraic sum of three matrices identical 
to $\be_u \be_{\tau(\sigma(u))}^T$, only with the signs changed. 
Thus, we obtain that when $u=\sigma^{-1}(v)$, 
the first term on the last line of~\eqref{eqM14a} simplifies to:
\begin{equation}\label{eqM14aa}
\begin{split}
 (-1)^{v+\tau(v)+1} M(\sigma,u,1)\be_v\be_{\tau(v)}^T &=
 \big(
 (-1)^{\tau(v)+u+1}
 + (-1)^{\tau(v)+u}
 + (-1)^{\tau(v)+u}
 \big)
 \be_u \be_{\tau(\sigma(u))}^T\\
  & =  (-1)^{u+\tau(v)}\be_u \be_{\tau(\sigma(u))}^T.
  \end{split}
\end{equation}

Making use of~\eqref{eqM13b} this time, since $v=\sigma(u)$, formula~\eqref{eqM14b} becomes:
\begin{equation}\label{eqM14bb}
\begin{split}
 M(\sigma,u,1)\be_v\br_v &=
  M(\sigma,u,0)\be_v\br_v 
  + (-1)^{u+\sigma(u)+1}  \be_u(\be_{\sigma(u)}^T \be_v)\br_v 
 + \be_u(\br_{u} \be_v)\br_v \\
 &=
 (-1)^{\sigma^{-1}(v)+v}\be_{\sigma^{-1}(v)}\br_v
 + (-1)^{u+\sigma(u)+1}  \be_u\br_{v} 
 + (-1)^{u+v-1}\be_u\br_v\\
  &= 
  % \left(
  \big(
 (-1)^{(u+v)} + (-1)^{u+v+1} + (-1)^{u+v-1)}
 \big)
 % \right)
  \be_u\br_v \\
  &=
  (-1)^{u+v+1}\be_u\br_v.
  \end{split}
\end{equation}

Upon inserting~\eqref{eqM14aa} and~\eqref{eqM14bb} into~\eqref{eqM13A}, we deduce:
\begin{equation}\label{eqM13sigmaveu}
\begin{split}
  M(\sigma,u,1)M(\tau,v,1) &= 
  M(\tau\sigma,u,0)
  + (-1)^{u+\tau(\sigma(u))+1}  \be_u\be_{\tau(\sigma(u))}^T + \be_u\br_{u}\\
  &\quad
  + (-1)^{u+\tau(v)}\be_u \be_{\tau(\sigma(u))}^T\\
  &\quad
  +(-1)^{u+v+1}\be_u\br_v\\
   &=  M(\tau\sigma,w,0).
  \end{split}
\end{equation}
Here, the last four terms cancel out, because $v=\sigma(u)$
and $\be_u\br_{u} = (-1)^{u+v}\be_u\br_{v}$.
As a consequence, formula~\eqref{eqM13sigmaveu} reduces to
\begin{align*}
   M(\sigma,u,1)M(\tau,v,1) =   M(\tau\sigma,w,0),\ \ 
 \text{ if $\sigma(u) = v$, for any $w$,} 
\end{align*}
which completes the proof of the first part of~\eqref{eqMultiplication00D}.

%%%%%%%%%%%%%%%%%%%%%%%%%%%%%%%%%%%%%%%%%
\medskip
\noindent
\textbf{\texttt{4.b.}} The case \fbox{$u\neq \sigma^{-1}(v)$}.
Since $\sigma(u)\neq v$, relation~\eqref{eqM14a} reduces to
\begin{equation}\label{eqM14aM}
\begin{split}
 (-1)^{v+\tau(v)+1} M(\sigma,u,1)\be_v\be_{\tau(v)}^T &=
 (-1)^{v+\tau(v)+1}M(\sigma,u,0)\be_v\be_{\tau(v)}^T\\
  &\quad
  -(-1)^{v+\tau(v)}\be_u(\br_{u}\be_v)\be_{\tau(v)}^T,
  \end{split}
\end{equation}
while~\eqref{eqM14b} reduces to
\begin{equation}\label{eqM14bM}
\begin{split}
 M(\sigma,u,1)\be_v\br_v &=
  M(\sigma,u,0)\be_v\br_v 
 + \be_u(\br_{u} \be_v)\br_v.
  \end{split}
\end{equation}
Then, on using~\eqref{eqM13a} and~\eqref{eqM13b},
relations~\eqref{eqM14aM} and~\eqref{eqM14bM} become:
%%%%%%%%%%%%%%%%%%%%%%%%%%%%%%%%%%%%%%%%%
\begin{equation}\label{eqM14aMb}
\begin{split}
 (-1)^{v+\tau(v)+1} M(\sigma,u,1)\be_v\be_{\tau(v)}^T &=
 % (-1)^{v+\tau(v)+1}M(\sigma,u,0)\be_v\be_{\tau(v)}^T\\
  (-1)^{\tau(v)+\sigma^{-1}(v)+1} \be_{\sigma^{-1}(v)} \be_{\tau(v)}^T\\
  &\quad
  -(-1)^{v+\tau(v)}\be_u(\br_{u}\be_v)\be_{\tau(v)}^T
  \end{split}
\end{equation}
and
\begin{equation}\label{eqM14bMb}
\begin{split}
 M(\sigma,u,1)\be_v\br_v &=
  % M(\sigma,u,0)\be_v\br_v 
  % + (-1)^{u+\sigma(u)+1}  \be_u(\be_{\sigma(u)}^T \be_v)\br_v 
  (-1)^{\sigma^{-1}(v)+v}\be_{\sigma^{-1}(v)}\br_v
 + \be_u(\br_{u} \be_v)\br_v.
  \end{split}
\end{equation}
Then, inserting~\eqref{eqM14aMb} and~\eqref{eqM14bMb} into~\eqref{eqM13A}, yields:
\begin{equation*}%\label{eqM13AAAA}
\begin{split}
  M(\sigma,u,1)M(\tau,v,1)
  % & = M(\tau\sigma,u,1)  \\
   & = M(\tau\sigma,u,0) + (-1)^{u+\tau\sigma(u)+1}\be_u\be_{\tau\sigma(u)}^T
   + \be_u\br_{u}\\
   &\quad 
   -(-1)^{\tau(v)+\sigma^{-1}(v)} \be_{\sigma^{-1}(v)} \be_{\tau(v)}^T\\
  &\quad
  -(-1)^{v+\tau(v)}\be_u(\br_{u}\be_v)\be_{\tau(v)}^T\\
  &\quad
  +(-1)^{\sigma^{-1}(v)+v}\be_{\sigma^{-1}(v)}\br_v
 + \be_u(\br_{u} \be_v)\br_v.
  \end{split}
\end{equation*}
Here, on the right-hand side, the second term from the first line 
does not cancel out the term from the third line, 
as it would have done if $v=\sigma(u)$, but
the third term from the first line cancels out 
with the last term from the last line. Thus, the above relation 
only slightly simplifies to:
\begin{equation}\label{eqM13AAAA}
\begin{split}
  M(\sigma,u,1)M(\tau,v,1)
  % & = M(\tau\sigma,u,1)  \\
   & = M(\tau\sigma,u,0) + (-1)^{u+\tau\sigma(u)+1}\be_u\be_{\tau\sigma(u)}^T\\
   &\quad 
   -(-1)^{\tau(v)+\sigma^{-1}(v)} \be_{\sigma^{-1}(v)} \be_{\tau(v)}^T\\
  &\quad
  -(-1)^{v+\tau(v) + u-1+v}\be_u\be_{\tau(v)}^T\\
  &\quad
  +(-1)^{\sigma^{-1}(v)+v}\be_{\sigma^{-1}(v)}\br_v.
  \end{split}
\end{equation}
Next, noticing that 
$(-1)^{\sigma^{-1}(v)+v}\be_{\sigma^{-1}(v)}\br_v
= \be_{\sigma^{-1}(v)}\br_{\sigma^{-1}(v)}$, 
we add the other corresponding terms 
to rewrite matrix $M(\tau\sigma,u,0)$ in terms of the kin matrix depending on
the permutation~$\eta$, defined by~\eqref{eqeta}, and~\mbox{$\epsilon=1$}, namely,
\begin{align*}
    M(\eta,\sigma^{-1}(v),1)  
   = M(\eta,\sigma^{-1}(v),0) 
   - (-1)^{\sigma^{-1}(v)+\eta(\sigma^{-1}(v))}
      \be_{\sigma^{-1}(v)} \be_{\eta(\sigma^{-1}(v))}^T
   +  \be_{\sigma^{-1}(v)}\br_{\sigma^{-1}(v)},  
\end{align*}
and then proceed to gather the other terms that
remain unused in the writing of the new matrix,
that are to be subtracted, and the leftover from the second line from~\eqref{eqM13AAAA}.
These terms fall into four categories, as follows:
\begin{enumerate}
      \item 
To be subtracted: the two new terms included in the sum defining 
$M(\eta,\sigma^{-1}(v),0)$ (according to definition~\eqref{eqDefMsih}) and which did not appear in $M(\tau\sigma,u,0)$, namely those for $j = \sigma^{-1}(v)$ and $j = u$;
    \item 
To be added: the terms in the sum defining $M(\tau\sigma,u,0)$ 
that were left out in the sum for $M(\eta,\sigma^{-1}(v),0)$, 
namely those for $j = \sigma^{-1}(v)$ and $j = u$;
    \item 
To be added: the second term on the right-hand side of  
$M(\eta,\sigma^{-1}(v),1)$, according to the expression written above;
    \item 
The three remaining leftover terms from the relation~\eqref{eqM13AAAA},
which we will place on the last three lines of the relation displayed below.
\end{enumerate}
As a result, writing the four categories of terms outlined on successive
lines, relation~\eqref{eqM13AAAA} is equivalent to the following expression:
\begin{equation*}%\label{eqM13AAAB}
\begin{split}
  M(\sigma,u,1)M(\tau,v,1) & = 
    M(\eta,\sigma^{-1}(v),1)  \\
    &\quad - (-1)^{\sigma^{-1}(v)+\eta(\sigma^{-1}(v))}
    \be_{\sigma^{-1}(v)} \be_{\eta(\sigma^{-1}(v))}^T  
    -(-1)^{u+\eta(u)}\be_{u} \be_{\eta(u)}^T    \\
   &\quad + (-1)^{\sigma^{-1}(v)+\tau\sigma(\sigma^{-1}(v))}
    \be_{\sigma^{-1}(v)} \be_{\tau\sigma(\sigma^{-1}(v))}^T
    +  (-1)^{u+\tau\sigma(u)}\be_{u} \be_{\tau\sigma(u)}^T
    \\
   &\quad + (-1)^{\sigma^{-1}(v)+\eta(\sigma^{-1}(v))}
      \be_{\sigma^{-1}(v)} \be_{\eta(\sigma^{-1}(v))}^T  
      \\
   &\quad 
   - (-1)^{u+\tau\sigma(u)}\be_u\be_{\tau\sigma(u)}^T\\
  &\quad
   -(-1)^{\tau(v)+\sigma^{-1}(v)} \be_{\sigma^{-1}(v)} \be_{\tau(v)}^T\\
  &\quad
   +(-1)^{u+\tau(v)}\be_u\be_{\tau(v)}^T.
  \end{split}
\end{equation*}
It remains to verify that the last eight terms of the above formula,
which are actually matrices that have exactly one non-zero element each,
cancel out each other. 
Indeed, we find that, except for the opposite signs they have, 
the first term coincides with the fifth, 
the third term coincides with the seventh, 
and the fourth coincides with the sixth.
Finally, we see that the second term coincides with the last
by appealing to the last condition in the definition of $\eta$,
which states that if $j=u$, then 
$\eta(u)= \tau(\sigma(\sigma^{-1})(v)) =\tau(v)$.
In conclusion, 
\begin{align*}
 M(\sigma,u,1)M(\tau,v,1) & =  M(\eta,\sigma^{-1}(v),1),\ \ 
 \text{ if $\sigma(u)\neq v$,}
\end{align*}
which proves the second part of relation~\eqref{eqMultiplication00D}.
This concludes the proof of Theorem~\ref{LemmaMsihMultiplication}.
\end{proof}

%%%%%%%%%%%%%%%%%%%%%%%%%%%%%%%%%%%%%%%%%%%%%%%
\section{\texorpdfstring{The group $\cM(n)$}{The group M(n)}}
%%%%%%%%%%%%%%%%%%%%%%%%%%%%%%%%%%%%%%%%%%%%%%%%%%%%%%%%%%%
\subsection{\texorpdfstring{The subgroup  $\cM_{0}(n)$}{The subgroup M0(n)}}\label{SectionMnM0n}

We let $\cM_0(n)$ be the subset of $\cM(n)$ containing only those matrices
that have no alternating line:
\begin{align*}
    \cM_0(n):=\{M\in\cM(n) : M = M(\sigma, w, 0),\, \sigma\in S_n\}\,,
\end{align*}
(where, as noticed from the definition~\eqref{eqDefMsih2}, the integer parameter $w$ plays no role).

The pair $(\cM_0(n),\cdot)$ is  a group with the identity
$M(\id,w,0)$. By~\eqref{eqMultiplication00A}, it follows that
$M(\sigma,w,0)^{-1}=M(\sigma^{-1},w,0)$, and the rule of multiplication is associative.
Furthermore, the transformation 
$f : S_n\to \cM_0(n)$ with
$f(\sigma):= M(\sigma,w,0)^{-1}$ is an isomorphism between~$\cM_0(n)$ and the symmetric group $S_n$.

The formulas in Theorem~\ref{LemmaMsihMultiplication} also show that
$\cM(n)$ is closed under the multiplication of matrices
and 
\begin{align*}
    M(\sigma,u,1)^{-1}= M(\sigma^{-1},\sigma(u),1).
\end{align*}
It follows that $\cM(n)$ is a group with $(n+1)!$ elements
and within, it lies the subgroup~$\cM_0(n)$ with $n!$ elements.
%%%%%%%%%%%%%%%%%%%%%%%%%%%%%%%%%%%%%%%%%%%%%%%%%%%%%%
\begin{figure}[htb]
\centering
\hfill
\includegraphics[width=0.38\textwidth,angle = -90]{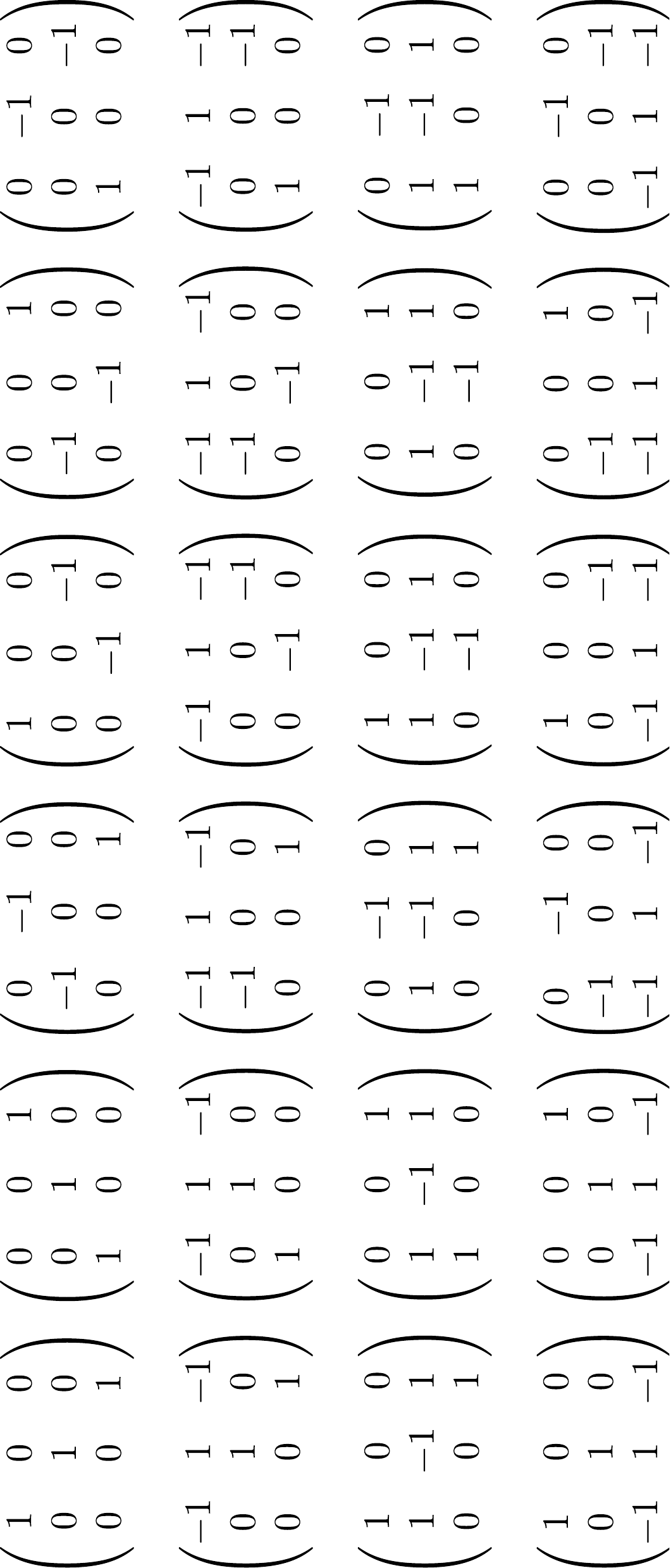}
\hfill\mbox{}
% \vspace*{-3mm}
\caption{The elements of $\cM(3)$. The matrices are ordered such that the first six matrices are the elements of the subgroup $\cM_0(3)$, and on the following lines are its cosets $K_1\cM_0(3)$, $K_2\cM_0(3)$, and $K_3\cM_0(3)$.}
\label{FigureM3}
\end{figure}

The group $\cM(3)$ and the cosets $K_j\cM_0(3)$
are listed in Figure~\ref{FigureM3}.
It can be verified there that $\cM(n)$ is non-abelian 
and neither $\cM_0(n)$ is a normal subgroup of $\cM_(n)$.
This follows in general for all $n\ge 2$
by observing how the groups form a growing tower of nested groups:
\begin{align*}
     \cM(2) < \cM(3) <\cdots < \cM(n) <\cdots
\end{align*}
\begin{figure}[htb]
\centering
\hfill
\includegraphics[width=0.389\textwidth, angle = -90]{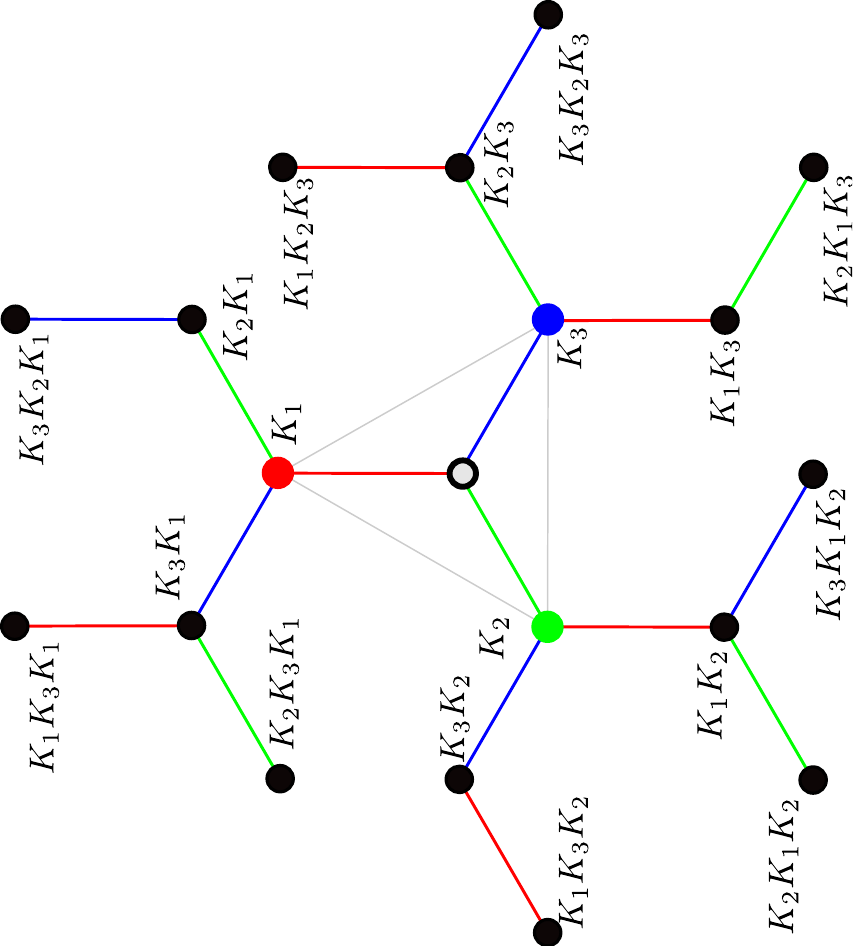}\quad
\includegraphics[width=0.389\textwidth, angle = -90]{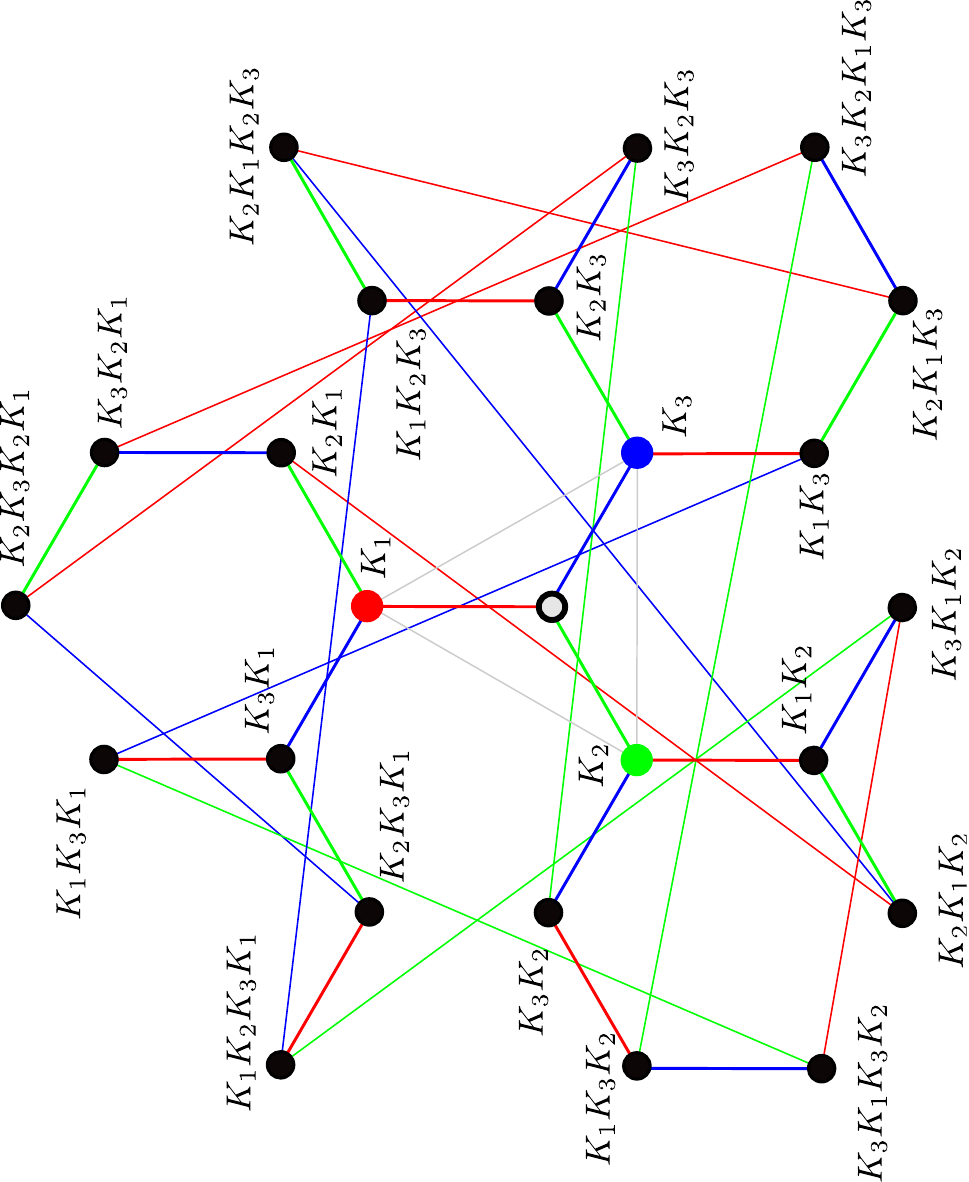}
\hfill\mbox{}
% \vspace*{-3mm}
\caption{ The Cayley diagram of $\cM(3)$.
The colored arcs in red, green, blue indicate left multiplications by $K_1$, $K_2$, $K_3$, respectively. 
In the symmetric diagram on the left-side, 
only the nodes 
located at an orbit distance $d_o$ of at most $3$
from the identity are shown. 
The other five nodes at distance $4$ from the identity and the corresponding arcs are shown in the complete figure on the right-side.}
\label{FigureCayley}
\end{figure}

%%%%%%%%%%%%%%%%%%%%%%%%%%%%%%%%%%%%%%%%%%%%%%%
\subsection{\texorpdfstring{
Prior attempts to classify $\cM(n)$}{Prior attempts to classify M(n)}}\label{SectionPrior}

The representation of matrices $K_j$ as $M(\sigma,w,\epsilon)$ 
establishes an initial link between matrices and permutations.
Then, the formulas from Theorem~\ref{LemmaMsihMultiplication}, leading to the conclusion that $\cM(n)$ is a group that has $(n+1)!$ elements, just like 
$\langle K_1,\dots,K_n\rangle \subset \cM(n)$, strengthen the hypothesis that there is a closer relation with the symmetric group.

We also saw in Section~\ref{SectionMnM0n} that the subgroup $\cM_0(n)$ of matrices without alternating rows is isomorphic to $S_n$.
In particular, we noticed that if $n=3$, then $\cM(3)$ is a group with $24$ elements that has $S_3$ as a subgroup.
Since $D_{2\cdot 12}$, the dihedral group of symmetries of the regular  dodecagon,
also has $S_3$ as a subgroup, and the multiplication formulas 
in $\cM(n)$ are somewhat tangled, one might guess that $\cM(3)$ could be isomorphic to $D_{2\cdot 12}$.
However, $D_{2\cdot 12}$ has an element of order $12$, while $\cM(3)$ does not, so that $\cM(3)\not\cong D_{2\cdot 12}$.

Additionally,  by checking the sets with repetition of the orders of the elements of $\cM(3)$ and $S_4$, it can be seen that they coincide 
(apart from the identity, there are $9$ elements of order $2$, $8$ elements of order $3$, and $6$ elements of order $4$;
see Figure~\ref{FigureOrdersS4M3}).
%%%%%%%%%%%%%%%%%%%%%%%%%%%%%%%%%%%%%%%%%%%%%%%%%%%%%%%%%%%
\begin{figure}[htb]
\centering
\hfill
\includegraphics[width=0.35\textwidth, angle = -90]{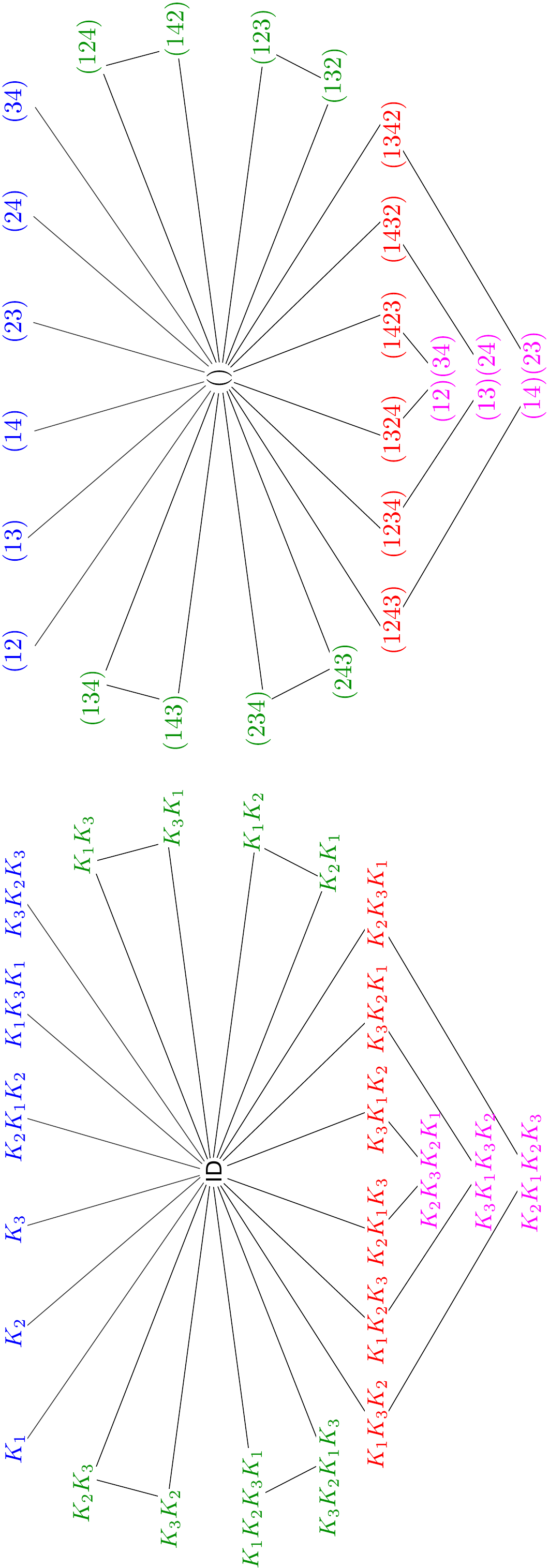}
\hfill\mbox{}
% \vspace*{-3mm}
\caption{The orders of the the matrices in $\cM(3)$
and of the permutations in $S_4$.
The positions of the elements in the diagrams are corresponding
to the isomorphism~\eqref{eqIsomorphismCD}.
There are $6+3$ elements of order $2$ (the later three belonging to subgroups of order $4$), $8$ elements of order $3$, 
and $6$ elements of order $4$. 
}
\label{FigureOrdersS4M3}
\end{figure}

All of these suggest that $\cM(3)\cong S_4$, and that this isomorphism might hold in any dimension (if $n=1$ or $2$, the verification that $\cM(n)\cong S_{n+1}$ is immediate).
In Figure~\ref{FigureOrdersS4M3}, one can see the diagrams of all cyclic subgroups of $S_4$ and $\cM(3)$.
Although their structures coincide, there are numerous arrangements in which the elements of subgroups of the same order can be placed in nodes, leaving hidden the association that would actually generate an isomorphism.

Regardless, trying first with the elements of order $2$,
we should aim to find a suitable pairing between the involutions.
On the one hand, in $\cM(3)$, these are:
$K_1,K_2,K_3$,
the palindromic representatives
$K_1K_3K_1$, 
$K_2K_1K_2$, 
$K_3K_2K_3$,
and
the three elements at orbit-distance $4$ from the identity, elements that belong to the three cycles of order four:
$K_2K_1K_2K_3$, $K_3K_1K_3K_2$, and $K_2K_3K_2K_1$ (see Figures~\ref{FigureCayley} and~\ref{FigureOrdersS4M3}).
On the other hand, in $S_4$ there are
the six transpositions $(1,2);(1,3);(1,4);(2,3);(2,4);(3,4)$
and the three products of disjoint transpositions
$(1,2)(3,4)$;
$(1,3)(2,4)$; and
$(1,4)(2,3)$.

When selecting minimal generator sets for the two groups, and taking $K_1, K_2,\dots, K_n$ on the $\cM(n)$ side, the `linked' chain of transpositions
$ (1,2), (2,3), \dots, (n-1,n), (n,n+1)$ on the side of~$S_{n+1}$ 
would seem to be the natural candidate for the corresponding generators of~$S_{n+1}$. Nevertheless, this approach proves unsuccessful.
The reason stands on the fact that any two distinct operators 
$K_j$ and $K_\ell$ do not commute, 
whereas disjoint transpositions, such as $(1,2)$ and $(3,4)$, do.

Another pick that will turn out to be successful is the subject of 
the following section.

%%%%%%%%%%%%%%%%%%%%%%%%%%%%%%%%%%%%%%%%%%%%%%
\section{\texorpdfstring{Completion of the proof that
$\cM(n)\cong S_{n+1}$
}{Completion of the proof that M(n) and S(n+1) are isomorphic}}\label{SectionIsomorphismMS}

Let us view $\cM(n)$ in a more abstract framework, that of the Coxeter groups (see~\cite{Cox1934, Cox1935, BB2005, Hum1990}).
As a finite group generated by $K_1,\dots, K_n$,
the associated matrix of the orders of the product of any two generators is
$(m_{jk})$, where  $m_{jk}=m_{kj}:=3$, if $1\le j\neq k\le n$,
according to Lemma~\ref{LemmaOrderKjl}, and $m_{jj}:=1$ if $1\le j\le n$, 
according to Lemma~\ref{LemmaInvolutionK}.
Then, the pair group-generators
$(M(n),\{K_1,\dots,K_n\})$, together with the presentation given by all relations $(K_jK_k)^{m_{jk}}=\Id$
for $1\le j,k\le n$ forms a \textit{Coxeter system},
and $M(n)$ is called a \textit{Coxeter group}.
Being a symmetric matrix that describes the relations 
between the involutions that generate the group $M(n)$,
$(m_{jk})$ is a \textit{Coxeter matrix}.

In an equivalent way, the Coxeter matrix can be represented 
as an unordered graph, called \textit{Coxeter diagram},
whose nodes are the generators $K_j$, and the edge
between any two distinct nodes $K_j$ and $K_k$ has weight $m_{jk}$ for $1\le j\neq k\le n$.
The convention is that no loop $K_jK_j$ is included in the diagram, and that the arcs are labeled by the weights only if they are strictly larger than $3$.
Hence, the Coxeter diagram of $M(n)$ is a polygon with 
$n$ vertices along with all its sides and diagonals.

Viewed as linear operators over an $n$-dimensional vector space, 
$K_j$ also verifies the conditions for being a reflection (see~\cite{Hum1990, BB2005}), 
so that $M(n)$ is also a \textit{reflexion group}.

The symmetric group $S_{n+1}$ is also a reflection group
as it identifies with a subgroup of
$O_{n+1}(\QQ):=\left\{O\in GL(n+1, \QQ) :
O^{\mathsf {T}}O=OO^{\mathsf {T}}=\Id\right\}$
of orthogonal $(n+1)\times (n+1)$ matrices,
in which permutations act on $\QQ^{n+1}$ by permuting
the subscripts of the basis vectors 
\mbox{$e_j=(0,\dots,0,1,0\dots 0)$}
(the $n+1$-tuple with all components equal to 0, 
except the~$j$th, which is $1$).
Moreover, $S_{n+1}$ is also a Coxeter group, but the presentation  
provided by the sequence of ordinary set of transpositions  
$(j,j+1)$ leads to a linear Coxeter diagram, which is fundamentally distinct from the above Coxeter diagram of $M(n)$.

In order to see that $M(n)$ is indeed isomorphic to $S_{n+1}$,
a different system of generators for~$S_{n+1}$ is needed.
One that proves suitable is the set of transpositions
$(1,j)$ with \mbox{$2\le j\le n+1$}.
These transpositions also generate $S_{n+1}$, because 
(with the multiplication of transpositions written from left to right)
$(1,j)\cdot (1,j+1)\cdot(1,j)=(j,j+1)$ shows that the ordinary transpositions $(j,j+1)$ can be obtained as products as the one from the new set.
The transpositions $(1,j)$ are also involutions and, since $(1,j)\cdot (1,k)=(1,j,k)$, it follows
$\big((1,j)\cdot (1,k)\big)^3=\id$ (the identity permutation).
Therefore, with this new set of transpositions as generators,
the associated matrix of the orders coincides with the
Coxeter matrix of $\cM(n)$. 
Thus, in this setting, $S_{n+1}$ is a Coxeter group whose
Coxeter diagram is an 
$n$-polygon together with all its edges and diagonals
un-labeled, exactly as that of $M(n)$.
Since the sets of generators of $\cM(n)$ and $S_{n+1}$ have the same cardinality, their associated Coxeter systems are 
equivalent.

In conclusion, these imply that $\cM(n)\cong S_{n+1}$
(see~\cite[Section 2.1]{Hum1990} for the general argument).
We remark that the requirement for the groups in question to be reflection groups can be bypassed, since, according 
to~\cite[Theorem 6.4]{Hum1990}, any Coxeter group is 
a finite group  if and only if it is a  reflection group.
The overall result still requires identifying how the isomorphism works, as we have seen that the choice of generator systems is essential.

For the sake of illustration, 
let $F:\{K_1,\dots,K_n\}\to \cM(n)$ be the natural embedding that maps $K_j$
to \mbox{$F(K_j):=K_j$} for all $j$. 
Then, let $\psi: \{K_1,\dots,K_n\}\to S_{n+1}$
be defined by $\psi(K_j):=(1,j+1)$.
Note that
\begin{align}\label{eqUP}
   \left(\psi(K_j) \psi(K_k)\right)^{m_{jk}}
   = \left((1, j+1)\cdot (1,k+1)\right)^{m_{jk}}
   = (1,j+1,k+1)^{m_{jk}} = \id
\end{align}
for all $j,k$ with $1\le j,k\le n$.
Then, according to the \textit{universality property}
of the groups obtained by presentations, 
there is a unique extension of $\psi$ 
to a group homomorphism 
$\Psi :\cM(n)\to S_{n+1}$
such that $\psi = \Psi\circ F $ (see~\mbox{}{\cite[Section 1.1]{BB2005}}).

Likewise, 
let $G:\{(1,2),\dots,(1,n+1)\}\to S_{n+1}$ be the natural embedding that maps $(1,j+1)$
to \mbox{$G((1,j+1))=(1,j+1)$} for all $j$. 
Then, let $\phi: \{(1,2),\dots,(1,n+1)\}\to \cM(n)$
be defined by $\phi((1,j+1)):=K_j$.
Similarly as in~\eqref{eqUP}, we  note that
\begin{align}\label{eqUPd}
   \left(\phi((1,j+1)) \phi((1,k+1))\right)^{m_{jk}}
   = \left(K_j\cdot K_k\right)^{m_{jk}}
   = \Id
\end{align}
for all $j,k$ with $1\le j,k\le n$,
according to Lemmas~\ref{LemmaInvolutionK} 
and~\ref{LemmaOrderKjl}.
Then, once we know that~\eqref{eqUPd} holds, again according to the universality property,
it follows that 
there is a unique extension of $\phi$ 
to a group homomorphism 
\mbox{$\Phi :S_{n+1} \to \cM(n)$}
such that $\phi = \Phi\circ G $.
In conclusion, we obtained the following commutative diagram
in which both homomorphisms $\Phi$ and $\Psi$ are unique:
\begin{equation}\label{eqIsomorphismCD}
  \begin{tikzcd}[
]
    \{K_1,\dots,K_n\} 
    \arrow[r,"F"]
    \arrow[swap,rd,"\psi"] 
    & \cM(n) 
    \arrow[swap, shift right=0.75ex,  d,"\Psi"] 
    & \\
     &  S_{n+1} 
         \arrow[swap,shift right=0.75ex, u,"\Phi"]
     & \{(1,2),\dots,(1,n+1)\}
     \arrow[l,"G"]
    \arrow[swap,lu,"\phi"] 
  \end{tikzcd}    
\end{equation}
Noting that both $\cM(n)$ and $S_{n+1}$ have the same cardinality $(n+1)!$
and the fact that when applied on the generators the homomorphisms
$\Psi$ and $\Phi$
satisfy
\begin{align*}
   \Phi(\Psi(K_j))&=\Phi((1,j+1))=K_j\\
\intertext{and}
    \Psi(\Phi(1,j+1))&=\Psi(K_j)=(1,j+1),
\end{align*}
it follows that $\Psi$ and $\Phi$ are the inverses of one another,
so that $\Psi$ and $\Phi$ are isomorphisms, that is,
we have proved the following theorem.
\begin{theorem}\label{TheoremIsomorphism}
$\cM(n)\cong S_{n+1}$ for any integer $n\ge 2$.
\end{theorem}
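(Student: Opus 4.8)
The plan is to make the correspondence of diagram~\eqref{eqIsomorphismCD} concrete by exhibiting a single honest isomorphism between the two groups; since by Theorem~\ref{LemmaMsihMultiplication} we already know $\#\cM(n)=(n+1)!=\#S_{n+1}$, it suffices to produce one surjective homomorphism in either direction. I would go from $S_{n+1}$ to $\cM(n)$, using the classical Coxeter presentation of $S_{n+1}$ by adjacent transpositions $s_i=(i,i+1)$, namely $S_{n+1}=\langle s_1,\dots,s_n\mid s_i^2=1,\ (s_is_{i+1})^3=1,\ (s_is_j)^2=1\text{ for }|i-j|\ge 2\rangle$. I prefer this presentation because it transfers cleanly to $\cM(n)$ and spares one from having to argue separately that the relations attached to the ``star'' transpositions $(1,j+1)$ already present $S_{n+1}$ and not some larger group.

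Guided by the target pairing $K_j\leftrightarrow(1,j+1)$ together with the identity $(i,i+1)=(1,i)(1,i+1)(1,i)$, I set $\Phi(s_1):=K_1$ and $\Phi(s_i):=K_{i-1}K_iK_{i-1}$ for $2\le i\le n$, and check that the defining relations of $S_{n+1}$ survive. The involution relations are Lemma~\ref{LemmaInvolutionK}; the relation $(\Phi(s_1)\Phi(s_2))^3=(K_2K_1)^3=\Id$ is the statement $(K_jK_\ell)^3=\Id$ of Lemma~\ref{LemmaOrderKjl}. For the rest I would use that, by Lemma~\ref{LemmaOrderKjl}, each $\Phi(s_i)$ with $i\ge 2$ equals the matrix $M(\sigma,w,0)$ for $\sigma$ the transposition of $i-1$ and $i$, so that the relations internal to $\{\Phi(s_2),\dots,\Phi(s_n)\}$ are exactly those of the transpositions $(1,2),\dots,(n-1,n)$ inside the subgroup $\cM_0(n)\cong S_n$ and hold for free, while the relations coupling $\Phi(s_1)=K_1=M(\id,1,1)$ to the others are verified by multiplying out with the formulas~\eqref{eqMultiplication00A}--\eqref{eqMultiplication00D}. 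The universal property of the presentation then gives a homomorphism $\Phi:S_{n+1}\to\cM(n)$.

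To finish I would argue surjectivity and count. The image of $\Phi$ contains $K_1$ and each $K_{i-1}K_iK_{i-1}$, and conjugating the latter by $K_{i-1}$ returns $K_i$, so inductively the image contains all of $K_1,\dots,K_n$; and $\langle K_1,\dots,K_n\rangle=\cM(n)$, since every $M(\sigma,h,0)$ lies in $\cM_0(n)=\langle K_jK_\ell K_j:j\ne\ell\rangle$ (Lemma~\ref{LemmaOrderKjl} realizes $(j,\ell)$ as $K_jK_\ell K_j$), and every $M(\sigma,h,1)$ equals $K_h\cdot M(\sigma,w,0)$ by the first branch of~\eqref{eqMultiplication00D} (take $\tau=\sigma$ and $u=v=h$). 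Hence $\Phi$ is onto, and since $\#\cM(n)=(n+1)!=\#S_{n+1}$ it is bijective, so an isomorphism; it is then immediate that $\Phi$ and the $\Psi$ of~\eqref{eqIsomorphismCD} are mutually inverse on the generating sets.

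The main obstacle I anticipate is the relation-checking in the second step: the assignment $s_i\mapsto K_{i-1}K_iK_{i-1}$ is not the naive one, and while the relations among $s_2,\dots,s_n$ transfer transparently through the identification $\cM_0(n)\cong S_n$, the relations that involve $s_1$ require multiplying $K_1=M(\id,1,1)$ against the ``permutation'' matrices $M(\sigma,w,0)$ and carefully tracking how the parameters move under Theorem~\ref{LemmaMsihMultiplication}. Everything downstream — invoking the universal property, checking surjectivity, and the cardinality argument — is routine once $\#\cM(n)=(n+1)!$ is in hand.
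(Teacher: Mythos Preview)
Your proof is correct and follows a genuinely different route from the paper's. The paper pairs $K_j$ with the ``star'' transposition $(1,j+1)$, observes that both families satisfy the same Coxeter-type relations $(g_jg_k)^3=1$ for $j\ne k$, and appeals to the universal property in both directions to produce mutually inverse homomorphisms $\Psi$ and $\Phi$ as in~\eqref{eqIsomorphismCD}. You instead start from the standard type-$A_n$ presentation of $S_{n+1}$ by adjacent transpositions---which is well known to be an honest presentation---and send $s_1\mapsto K_1$, $s_i\mapsto K_{i-1}K_iK_{i-1}=M\bigl((i{-}1,i),w,0\bigr)$ for $i\ge 2$; the braid and commuting relations among $\Phi(s_2),\dots,\Phi(s_n)$ then live entirely inside $\cM_0(n)\cong S_n$ and hold automatically, while those involving $\Phi(s_1)=K_1$ follow from Lemma~\ref{LemmaOrderKjl} together with formulas~\eqref{eqMultiplication00B}--\eqref{eqMultiplication00C}. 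A single surjective homomorphism plus the count $\#\cM(n)=(n+1)!$ then finishes the argument. What your approach buys is that the presentation you invoke is uncontroversially a presentation of $S_{n+1}$; by contrast, the abstract Coxeter group attached to the complete-graph diagram with all $m_{jk}=3$ is infinite already for $n=3$ (it is the affine group~$\tilde A_2$), so the relations the paper checks do not by themselves present either group, and the two-way universal-property step needs more justification than is recorded there. One small slip in your write-up: the identity $M(\sigma,h,1)=K_h\cdot M(\sigma,w,0)$ comes from~\eqref{eqMultiplication00B} (take the left factor to be $M(\id,h,1)=K_h$), not from the first branch of~\eqref{eqMultiplication00D}; your suggested substitution ``$\tau=\sigma$, $u=v=h$'' does not produce this identity.
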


%%%%%%%%%%%%%%%%%%%%%%%%%%%%%%%%%%%%%%%%%%%%%%
\section{\texorpdfstring{Completion of the proof of Theorem~\ref{PropositionPodDI}}{Completion of the proof of PropositionDI}}\label{SectionProofPropositionPodDI}
In order to address the inherent challenges arising from the non-commutativity of matrix multiplication, we will transfer the calculations to the symmetric group. 
Via the isomorphism~\eqref{eqIsomorphismCD}, we have
\begin{align*}
    \Psi(K_1K_{2}\cdots K_{n-1}K_n) = (1,2)(1,3)\cdots (1,n)(1,n+1).
\end{align*}
We let $\pi$ denote the product $\pi = (1,2)(1,3)\cdots (1,n)(1,n+1)$,
where we recall that the convention is that the action of the transpositions is taken in order from left to right.
Let us now compose $\pi$ with itself iteratively, starting by
checking successively the path of $1$.
Thus, we have:
\begin{align*}
    \pi(1)=2;\ \pi^2(1)=\pi(2)=3;\ 
    \pi^3(1)=\pi^2(2)=\pi(3)=4; \dots
\end{align*}
Then, by induction, it follows that
$\pi^n(1)=n+1$, which implies that $\pi^{n+1}(1)=1$,
and that no lower exponent than $n+1$ can turn $1$ into $1$.

Likewise, a circular path is also followed by any different element $k\neq 1$. It also traverses through all other values, since we have;
\begin{align*}
    \pi(k)=k+1;\ \pi^2(k)=\pi(k+1)=k+2;\dots;
    \pi^n(k)=k-1; \pi^{n+1}(k)=k, 
\end{align*}
with the convention that $n+2$ is replaced by $1$.

This completes the proof of Theorem~\ref{PropositionPodDI}.
\hfill\mbox{}$\qed$\mbox{}

%%%%%%%%%%%%%%%%%%%%%%%%%%%%%%%%%%%%%%%%%%%%%%
\section{\texorpdfstring{Arithmetic-Geometric aspects of the operators $\K_j$}{Arithmetic-geometric aspect of the operators Kj}}

For any integer $n\ge 2$, we write $\bx=(x_1,x_2,\dots,x_n)$ and
introduce the operators \mbox{$\K_j : \ZZ^n\to \ZZ^n$} for $1\le j\le n$
defined by
\begin{align*}
    \K_j(\bx):=(x_1,\dots,x_{j-1},\br_j\bx,x_{j+1},\dots, x_n).
\end{align*}
Note that $\K_j(\bx)$ keeps fixed $n-1$ components of $\bx$, 
and the component with index $j$ is the alternate circular 
sum of all components starting with $-x_j$.
As a first consequence, it follows that $\K_j(\bx)$ is a 
\textit{homothety} with center the origin.

Using the matrices introduced by~\eqref{eqDefKj}, we have:
\begin{align}\label{eqDefOperatorKj}
    \K_j(\bx):= (K_j\bx^T)^T.
\end{align}
In this form, proving the involution from Lemma~\ref{LemmaInvolutionK} 
only requires verifying that the $x_j$ component returns to its initial value 
after two transformations applied by $\K_j$. 
Indeed, upon canceling out terms, we find that
\begin{align*}
   -\br_j\bx+x_{j+1}-\cdots+(-1)^{n+j-1}x_n+ (-1)^{j-1}x_1+\cdots+x_{j-1}=x_j.
\end{align*}

To measure the spaces between points, three natural distances 
can be used to describe geometric patterns.
These are: the \textit{ Euclidean distance} denoted  by $d_E$, 
the \textit{taxicab distance} denoted by $d_c$ which adds up the length 
of successive segments along a specified path between two points, 
and the distance $d_o$ which counts the minimum number of operators~$\K_j$ 
that must be applied to reach from one point to another. 
Note that this last distance, which is formally defined by
\begin{align*}
    d_o(\bx,\by):= \min_r\big\{
         (j_1,\dots,j_r)\in\{1,\dots,n\}^r : 
         \K_{j_1}\cdots\K_{j_r} (\bx)=\by
    \big\}
\end{align*}    
verifies the defining axioms of a distance.
We will typically use $d_o$ for points belonging to the same orbit,
so that we call $d_o$ the \textit{orbit distance}.
Likewise, we will use $d_c$ as the taxicab distance along the shortest possible path between two points within the same orbit.

\begin{figure}[htb]
\centering
\hfill
\includegraphics[width=0.26\textwidth, angle = -90]{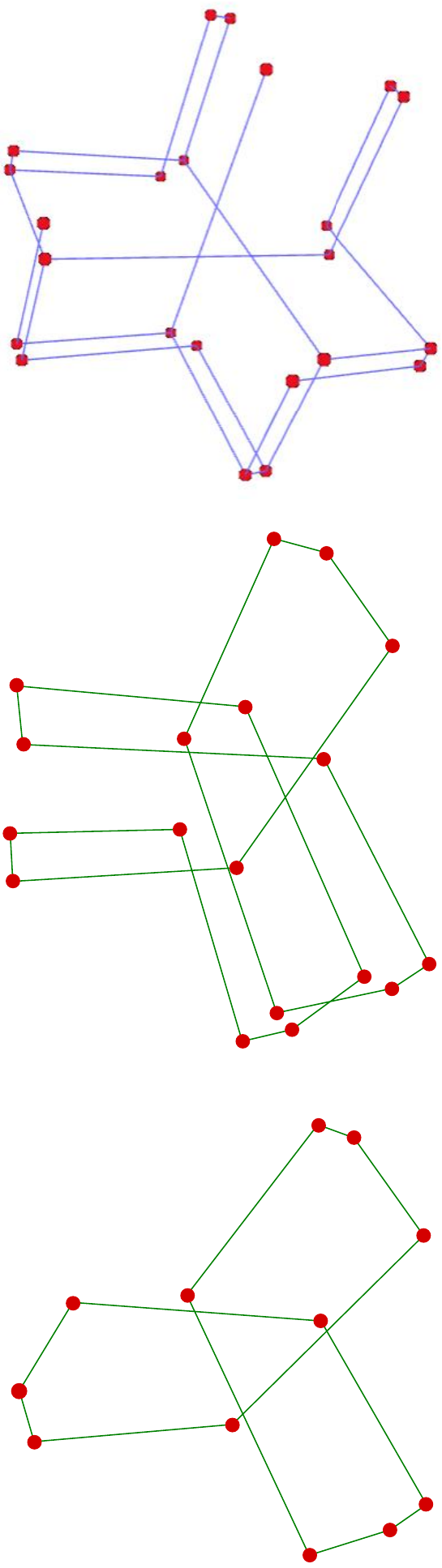} 
% Orbits12Jumps18Jumps.pdf
\hfill\mbox{}
% \vspace*{-3mm}
\caption{Orbits of distinct jumps for $n=3$.
The orbit on the left-hand side has $12$ jumps and is generated by the sequence of $\K_j$'s with $j\in \{1, 2, 3, 1, 2, 3, 1, 2, 3, 1, 2, 3\}$.
The orbit on the center has $18$ jumps and is generated by the augmented sequence of~$\K_j$'s with $j\in \{2, 3, 2, 1, 3, 2, 3;\, 2, 3, 1, 2, 3, 1, 2, 3, 1, 2, 3\}$.
The orbit on the right-hand side is longer, but the nodes are not all different and the associated path is neither Eulerian nor Hamiltonian.
}
\label{Orbits12Jumps18Jumps}
\end{figure}
While starting with $\bw\in\ZZ^n$,
depending on the choice and order of the indices $j_1,j_2,\ldots$ in 
$\{1,2,\dots,n\}$, the iterated application of  
$\K_{j_1},\K_{j_2},\dots$
may lead to a path that may be closed or not (see Figure~\ref{Orbits12Jumps18Jumps}). 
Of special interest are those that are minimal, such as those 
that generate Eulerian or Hamiltonian orbits.
By jumping just randomly, the chances of finding such a path when $n$ increases are getting smaller and smaller.
To find a Hamiltonian closed path, a coordinated strategy is needed, for example, by choosing the combined one of 
walking orderly on one one-step-incomplete-orbit embedded 
in a $2$-dimensional section of $\ZZ^n$, 
followed by replacing the last step to completion with a jump to another level (an example in three dimensions is shown in Figure~\ref{Figure3dorbits}).
In the primary case $n=2$, as $K_1$ and~$K_2$ are involutions, 
there is no other choice but taking on the repetition of the two steps sequence $K_1,K_2$, which ends quickly resulting in an orbit of at most $6$ distinct edges.
Their foreground is discussed in the following.

%%%%%%%%%%%%%%%%%%%%%%%%%%%%%%%%%%%%%%%%%%%%%%%%%%%%%%%%%%%
\subsection{\texorpdfstring{The orbits if $n=2$}{The orbits if n=2}}\label{SectionAughts}
In this case, there are only two $\K$-operators:
\begin{align*}
    \K_1(x_1,x_2) = (-x_1+x_2,x_2)\ \ \text{ and }
    \K_2(x_1,x_2) = (x_1, x_1-x_2).
\end{align*}

Knowing that both $\K_1$ and $\K_2$ are involutions and according to 
Lemma~\ref{LemmaOrderKjl}, the orders of $\K_1\K_2$ and $\K_2\K_1$
are $3$, it follows that the maximum length of a cycle generated 
by any $(x_1, x_2)\in\ZZ^2$ is $6$. 
The points of the cycle are: 
\begin{equation}\label{eqOrbitPath}
  \begin{split}
    & (x_1, x_2) 
    \xleftrightarrow{\K_1} (-x_1+x_2, x_2) 
    \xleftrightarrow{\K_2} (-x_1+x_2, -x_1)
    % \xleftrightarrow{\K_1} 
    \\
    \xleftrightarrow{\K_1} 
    & \,
    (-x_2, -x_1) 
    \xleftrightarrow{\K_2} (-x_2, x_1-x_2)
    \xleftrightarrow{\K_1} (x_1, x_1-x_2) 
    \xleftrightarrow{\K_2} (x_1, x_2).
  \end{split}
\end{equation}
and the cycle can be traversed in any order by applying the same indicated operators in reverse.
If we denote with $P_1,\dots,P_6$ the successive points in an orbit,
then we see that they group into three distinct pairs of 
diametrically opposite points at orbit distance equal to $3$:
$(P_1,P_4)$, $(P_2,P_5)$, and $(P_3,P_6)$.

Letting $P_1=(x_1,x_2)$ and applying the operators starting with $\K_1$ as 
in~\eqref{eqOrbitPath}, we have
\begin{align}\label{eqdcby3}
    d_c(P_1,P_2) = \abs{-x_1+(-x_1+x_2)} &= \boxed{\abs{2x_1-x_2}}\notag\\
         &= \abs{-(-x_1)+(x_1-x_2)} = d_c(P_4,P_5);\notag\\
    d_c(P_2,P_3) = \abs{-x_2+(-x_1)} &= \boxed{\abs{x_1+x_2}}
         = \abs{-(-x_2)+x_1} = d_c(P_5,P_6);\\
    d_c(P_3,P_4) = \abs{-(-x_1+x_2)+(-x_2)} &= \boxed{\abs{2x_2-x_1}}\notag\\
        &= \abs{-(x_1-x_2)+x_2} = d_c(P_6,P_1). \notag   
\end{align}
Adding up the lengths of the jumps, we have
$d_c(P_1,P_4) = d_c(P_1,P_2)+d_c(P_2,P_3)+d_c(P_3,P_4)$,
and similarly for the other pairs, it then follows that 
\begin{align}\label{eqCabDiameters}
    d_c(P_1,P_4) &= 
    d_c(P_2,P_5) = 
    d_c(P_3,P_6),  
\end{align}
that is, the pairs 
$(P_1,P_4)$, $(P_2,P_5)$, and $(P_3,P_6)$
that are diametrically opposite in the orbit distance $3$
are also diametrically opposite in the taxicab distance taken along the path
in the orbit $P_1,P_2,\dots,P_6$.

We let $p(\bx)$ denote the \textit{taxicab semi-perimeter} 
of the orbit that contains $\bx$, which by~\eqref{eqdcby3} 
and~\eqref{eqCabDiameters} is
\begin{align}\label{eqSemiperemeter2D}
    p(\bx) := \abs{2x_1-x_2} + \abs{x_1+x_2} + \abs{2x_2-x_1}.
\end{align}

Here are a few of the simple observations about the orbits
$o(\bx)$ generated by points~$\bx\in\ZZ$ by the iterative application of $\K_1$ and $\K_2$.
Each $\bx$ is a generator of a unique orbit.
The orbit containing the origin has only one element.
Starting from $\bx$ and applying iteratively $\K_1, \K_2,\dots$, or $\K_2, \K_1,\dots$, 
the set of nodes obtained is the same, only traversed in one direction or in the opposite direction.
The components of the nodes belonging to an orbit 
can take only six values: $\pm x_1, \pm x_2$, and $\pm(x_1-x_2)$ 
for some $x_1,x_2\in\mathbb{Z}$, each of these being taken twice.
The lattice points on the lines $y=2x$
are fixed points of $\K_1$ and 
those on $y=x/2$ are fixed points of $\K_2$.
As a result, except for the origin,
they belong to orbits with exactly three distinct points.
Half of the segments joining consecutive nodes 
of an orbit 
are horizontal and the other half are vertical.
% \end{enumerate}   
We collect in the next proposition further properties
of the orbits in $2$-dimensions.some 
%%%%%%%%%%%%%%%%%%%%%%%%%%%%%%%%%%%%%%%%%%%%%%%%%
\begin{proposition}\label{PropositionOrbits2D}
Starting with $(x_1,x_2)\in\ZZ^2$, the repeated application of the operators 
$\K_1,\K_2,\K_1,\K_2,\dots$ generates cycles of length $6$.
The set of all orbits formed by either~$1$ or~$3$ points in degenerate cases,
or by~$6$ distinct points in almost all cases, 
creates a partition of~$\ZZ^2$.
The orbits have the following characteristics:
\begin{enumerate}
  % \item     The length of an orbit is an even number.
   \item[\namedlabel{PropositionOrbits2DA}{\normalfont{(i)}}]
    The length (or the perimeter) of each orbit is divisible by $4$.

    \item[\namedlabel{PropositionOrbits2DB}{\normalfont{(ii)}}]
     There exist orbits with perimeter congruent to $r\pmod{d}$ for any odd integer $d\ge 3$ and
    $r\in\{0,1,2,\dots,d-1\}$.

    \item[\namedlabel{PropositionOrbits2DC}{\normalfont{(iii)}}]
The bounding box of each orbit is a square
with a perimeter equal to that of the orbit.
   
\end{enumerate}
\end{proposition}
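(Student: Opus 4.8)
The plan is to reduce all three parts to the explicit taxicab perimeter of $o(\bx)$. From the orbit description in~\eqref{eqOrbitPath} and the edge lengths computed in~\eqref{eqdcby3}--\eqref{eqSemiperemeter2D}, the perimeter of $o(\bx)$ equals $2p(\bx)$ with
\[
   p(\bx)=\abs{2x_1-x_2}+\abs{x_1+x_2}+\abs{2x_2-x_1},
\]
and this formula is valid verbatim in the degenerate cases as well, where one or two of the three summands vanish. The opening assertions — that $\K_1,\K_2,\K_1,\dots$ closes after six steps and that the orbits partition $\ZZ^2$ — are exactly what the discussion around~\eqref{eqOrbitPath} records: the first holds because $\K_1,\K_2$ are involutions with $(\K_1\K_2)^3=\Id$ (Lemma~\ref{LemmaOrderKjl}), and the second is the orbit decomposition of the $\langle\K_1,\K_2\rangle$-action.

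For part~\ref{PropositionOrbits2DA} it is enough to show $p(\bx)$ is even, and this is immediate from $\abs{m}\equiv m\pmod 2$ for every integer $m$: reducing mod~$2$ gives $p(\bx)\equiv(2x_1-x_2)+(x_1+x_2)+(2x_2-x_1)=2(x_1+x_2)\equiv 0\pmod 2$, so $4\mid 2p(\bx)$. For part~\ref{PropositionOrbits2DB} I would exhibit an explicit family rather than argue in general: taking $\bx=(k,0)$ gives $p(k,0)=4\abs{k}$, so $o((k,0))$ has perimeter $8\abs{k}$; since $d$ is odd we have $\gcd(8,d)=1$, hence $k\mapsto 8k$ is surjective onto $\ZZ/d\ZZ$, and one simply picks a positive integer $k$ with $8k\equiv r\pmod d$.

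For part~\ref{PropositionOrbits2DC} the plan is to read the bounding box straight off~\eqref{eqOrbitPath}. The first coordinates of $P_1,\dots,P_6$ take precisely the three values $x_1,\ x_2-x_1,\ -x_2$, and the second coordinates take precisely $x_2,\ -x_1,\ x_1-x_2$; these two triples are negatives of one another, so the enclosing box has equal width and height and is therefore a square. Its side length is $\max\{x_1,x_2-x_1,-x_2\}-\min\{x_1,x_2-x_1,-x_2\}$, which by the elementary identity $\max\{a,b,c\}-\min\{a,b,c\}=\tfrac12\big(\abs{a-b}+\abs{b-c}+\abs{c-a}\big)$ equals $\tfrac12 p(\bx)$; hence the box has perimeter $4\cdot\tfrac12 p(\bx)=2p(\bx)$, which is the orbit perimeter.

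Most of this is bookkeeping once~\eqref{eqOrbitPath} and~\eqref{eqSemiperemeter2D} are available; I expect the only point needing genuine attention to be part~\ref{PropositionOrbits2DC}, namely spotting that the extreme horizontal and vertical coordinates occurring in an orbit are negatives of one another — this is what upgrades ``rectangle'' to ``square'' — and then checking that the same coordinate lists and the same $\max$--$\min$ identity also cover the degenerate $1$- and $3$-point orbits, so that no separate case analysis is needed.
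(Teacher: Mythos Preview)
Your proof is correct. Parts~\ref{PropositionOrbits2DA} and~\ref{PropositionOrbits2DB} are essentially identical to the paper's arguments: the parity reduction $\abs{m}\equiv m\pmod 2$ for~(i), and the explicit family $\bx=(k,0)$ with perimeter $8\abs{k}$ together with $\gcd(8,d)=1$ for~(ii).

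For part~\ref{PropositionOrbits2DC} your approach is genuinely different from the paper's and, arguably, cleaner. The paper argues geometrically: each side of the bounding rectangle must contain one of the six orbit edges, the edges come in three horizontal/vertical pairs of equal length, and a pigeonhole-style matching forces one edge to equal the side and the other two to sum to it, both horizontally and vertically. You instead read the coordinate extremes directly off~\eqref{eqOrbitPath}: the set of first coordinates $\{x_1,\,x_2-x_1,\,-x_2\}$ is the negation of the set of second coordinates $\{x_2,\,-x_1,\,x_1-x_2\}$, which immediately gives equal width and height, and then the identity $\max\{a,b,c\}-\min\{a,b,c\}=\tfrac12\bigl(\abs{a-b}+\abs{b-c}+\abs{c-a}\bigr)$ yields the side length $\tfrac12 p(\bx)$ without further casework. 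Your route is purely algebraic and handles the degenerate orbits uniformly; the paper's route makes the geometric picture (edges tiling the boundary of the square) more visible but is a bit informal at the pigeonhole step.
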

\begin{proof}

%%%%%%%%%%%%%%%%% proof of (i) %%%%%%%%%%%%%%%%%%%%%%%%%%%%
We already know from the first part of this section 
that the perimeter of an orbit~$o(\bx)$ is even and has 
the semi-perimeter $p(\bx)$ as in~\eqref{eqSemiperemeter2D}.
Then, since any integer has the same parity as its absolute 
value has, it follows
\begin{align*}
   \abs{2x_1-x_2} + \abs{x_1+x_2} + \abs{2x_2-x_1}
   &\equiv   (2x_1-x_2) + (x_1+x_2) + (2x_2-x_1)\pmod{2}\\
   &\equiv 0\pmod{2},
\end{align*}
which proves part~\ref{PropositionOrbits2DA}.

%%%%%%%%%%%%%%%%% proof of (ii) %%%%%%%%%%%%%%%%%%%%%%%%%%%%
Let $d\ge 3$ be an odd integer and let 
$r\in\{0,1,2,\dots,d-1\}$.
While there are many orbits with perimeter $\equiv r\pmod{d}$
(in Figure~\ref{FigureColoredOrbits}, pick any color and 
see how many and how the points 
of the chosen color are distributed),
here we will observe that even the orbits intersecting the real axis 
have this property.
Indeed, let $\bx=(k,0)$, where $k$ is any non-zero integer. 
Then $2p(\bx)=2(\abs{2k} +\abs{k} +\abs{k})=8k$.
Now, part~\ref{PropositionOrbits2DB} follows since $d$ is odd, so that $\gcd(8,d)=1$, 
which implies that each residue class $r\pmod d$ 
is equally covered by both the set $\ZZ$ 
and the set  $8\ZZ$.

%%%%%%%%%%%%%%%%% proof of (iii) %%%%%%%%%%%%%%%%%%%%%%%%%%%%
Let us now show that the smallest rectangle that encloses 
an orbit is actually a square (see three such examples 
in the graphic representation in Section~\ref{SectionAughtsGraphics}.
% in Figure~\ref{FigureOrbitsP100}).
Let us observe that since the jumps between consecutive nodes 
of the orbit are made alternately and parallel to the two 
coordinate axes, it follows that each side of the minimal 
rectangle enclosing the orbit will have to include such a
jump (even if it is of length $0$, as it happens in 
degenerate cases).

From~\eqref{eqOrbitPath} and~\eqref{eqdcby3} we find that
there are at most three different lengths of segments 
that connect consecutive points in the orbit.
There are always three distinct segments, except in the
degenerate cases, where a jump has zero length. 
In such an instance, the orbit is formed by segments 
that have only two different lengths or,
if a node in the orbit is the origin, then all nodes 
coincide, causing all segments to be of equal length $0$.
Additionally, the six segments that make up an orbit are 
paired as one horizontal segment and one vertical segment, 
both having the same length.

The three lengths of segments of the orbit $o(\bx)$ are: 
$\abs{x_1+x_2}$, $\abs{x_1-2x_2}$, and $\abs{x_2-2x_1}$.
Thinking of the segments of $o(\bx)$ as untied at nodes,
we can then translate them in such a way that they fill the gaps on the sides of the enclosing rectangle. 
Furthermore, the pigeonhole principle implies that it cannot be otherwise than one of the three segments matches a rectangle's side
and the other two segments complement each other to equal that length.
And this happens both horizontally and vertically. 
Consequently, the bounding box of $o(\bx)$ is a 
square with side length $L(\bx):=p(\bx)/2$ and
\begin{equation}\label{eqL}
  \begin{split}
 L(\bx)    
   &=\max\big\{
   \abs{x_1+x_2},\,\abs{x_1-2x_2},\,\abs{x_2-2x_1}
   \big\}\\
   &= \frac{1}{2}\big(
    \abs{2x_1-x_2} + \abs{x_1+x_2} + \abs{2x_2-x_1}
    \big).
  \end{split}
\end{equation}
This completes the proof of part~\ref{PropositionOrbits2DC}.
\end{proof}

%%%%%%%%%%%%%%%%%%%%%%%%%%%%%%%%%%%%%%%%%%%%%%%%%%
\subsection{\texorpdfstring{The Euclidean diameter of $o(\bx)$}
{The Euclidean diameter of o(x)}}
While in distances $d_o$ and $d_c$ all three pairs 
$(P_1,P_4)$, $(P_2,P_5)$, and $(P_3,P_6)$
of points in an orbit are diametrically opposite, 
we will see here that, 
except for degenerate cases, the Euclidean distances 
between the components of the three pairs are different.

By~\eqref{eqOrbitPath} it follows that 
\begin{align*}
    d_E(P_1,P_4) = \sqrt{2}\abs{x_1+x_2},\
    d_E(P_2,P_5) = \sqrt{2}\abs{2x_2-x_1},\
    d_E(P_3,P_6) = \sqrt{2}\abs{2x_1-x_2},
\end{align*}
so that the Euclidean diameter 
of the orbit $o(\bx)$ is
\begin{align*}
    \diam_E(o(\bx)) = \sqrt{2}\max\big\{
        \abs{x_1+x_2},\,
        \abs{2x_2-x_1},\,
        \abs{2x_1-x_2}
    \big\}.
\end{align*}

%%%%%%%%%%%%%%%%%%%%%%%%%%%%%%%%%%%%%%%%%%%%%%%%%%
\section{Representatives of the orbits}\label{SectionRepresentativeOrbits}\label{SectionRepresentatives}
Let us observe here how applying the mappings $\K_1$ and $\K_2$ results in transformations of well-determined domains. These transformations are essentially symmetries consisting of reflections and vertical or horizontal translations.

First, we note that the structure of the transformations is marked 
by certain directing lines. These are:
the line $y = 2x$ fixed by $\K_1$, and the line $y = x/2$ fixed by $\K_2$; the first 
diagonal that maps into the $y$-axis through $\K_1$ and respectively into the 
$x$-axis through $\K_2$. In particular, the origin remains fixed by both 
$\K_1$ and $\K_2$.
Following this, we can see that horizontal segments of equal length 
symmetrically positioned with respect to a point on the line $y=2x$ are mirrored 
in each other through $\K_1$. 
The same applies to vertical segments of equal length symmetrically positioned 
with respect to a point on the line $y=x/2$ are mirrored in each other 
through $\K_1$. 

Thus, let $M>0$ and the points be denoted as in 
Figures~\ref{FigureMirrorsTOP} and~\ref{FigureMirrorsRIGHT}, 
namely $A = (M,M)$, $B=(0,M)$, $C=(-M,0)$, and so on, 
in counterclockwise order, till $F=(M,0)$ and $A$, again; 
and the corresponding midpoints denoted 
by $\alpha = (M/2,M)$, $\beta = (-M/2,M/2)$, $\gamma = (-M,-M/2)$, and so on $\eta = (M,M/2)$.
%%%%%%%%%%%%%%%%%%%%%%%%%%%%%%%%%%%%%%%%%%%%%%%%%%%%%%
\begin{figure}[t]
\centering
\hfill
\includegraphics[width=0.32\textwidth]{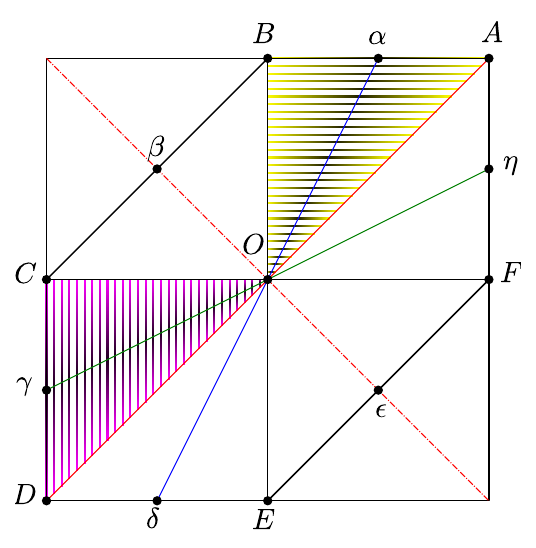}
\includegraphics[width=0.32\textwidth]{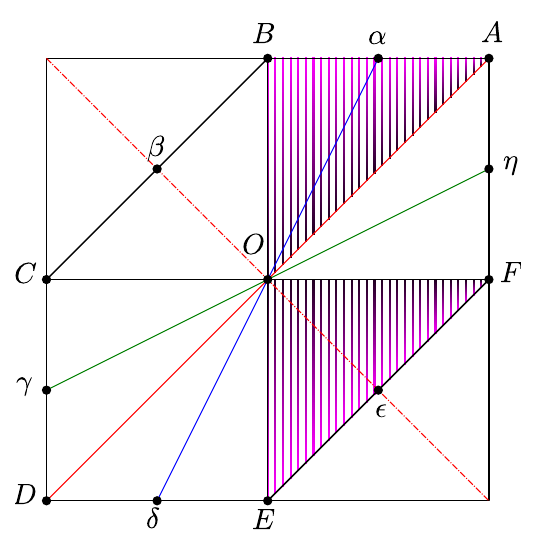}
\includegraphics[width=0.32\textwidth]{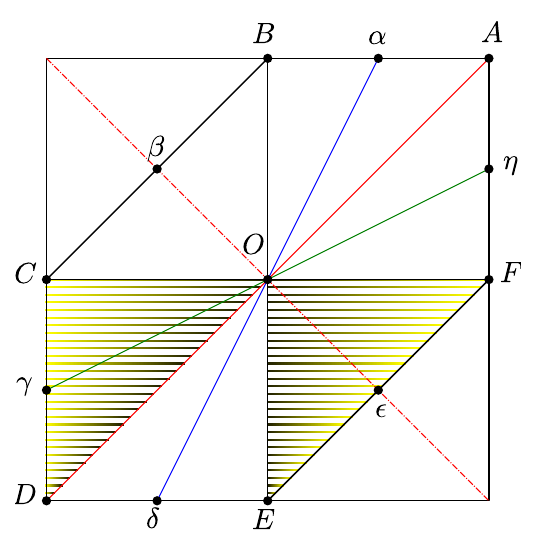}
\hfill\mbox{}
% \vspace*{-3mm}
\caption{
Reflections and transformations through horizontal or vertical sliding of the top-right triangle $\triangle A\alpha O$ by the successive application of $\K_1$ and $\K_2$.
The corresponding points on the paired segments are indicated by the same color intensity.
}
\label{FigureMirrorsTOP}
\end{figure}
There are two cases to consider based on a starting segment, whether it is horizontal or vertical. 
Successive applications of the involutions $\K_1$ and $\K_2$ transform a segment back and forth, by maintaining the sequential order of points in segments from one end to the other end.

Thus, starting with the top-right segment $Aa$, the transformations are:
\begin{equation}\label{eqSegmentsPathTOP}
  \begin{split}
    & A\alpha 
    \xleftrightarrow{\K_1} B\alpha 
    \xleftrightarrow{\K_2} E\epsilon
    % \xleftrightarrow{\K_1}  \\
    \xleftrightarrow{\K_1} 
    % & \,
    D\gamma 
    \xleftrightarrow{\K_2} C\gamma
    \xleftrightarrow{\K_1} F\epsilon 
    \xleftrightarrow{\K_2} A\alpha,
  \end{split}
\end{equation}
while starting with the right-up segment $A\eta$, produces the cycle:
\begin{equation}\label{eqSegmentsPathRIGHT}
  \begin{split}
    & A\eta
    \xleftrightarrow{\K_1} B\beta
    \xleftrightarrow{\K_2} E\delta
    % \xleftrightarrow{\K_1}  \\
    \xleftrightarrow{\K_1} 
    % & \,
   D\delta
    \xleftrightarrow{\K_2} C\beta
    \xleftrightarrow{\K_1} F\eta 
    \xleftrightarrow{\K_2} A\eta,
  \end{split}
\end{equation}

%%%%%%%%%%%%%%%%%%%%%%%%%%%%%%%%%%%%%%%%%%%%%%%%%%%%%%
\begin{figure}[htb]
\centering
\hfill
\includegraphics[width=0.32\textwidth]{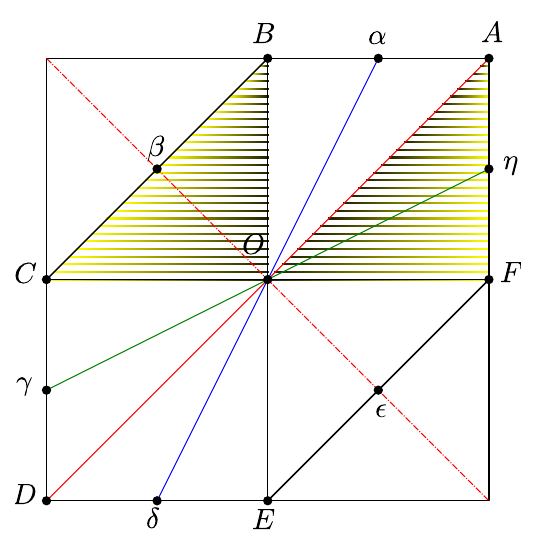}
\includegraphics[width=0.32\textwidth]{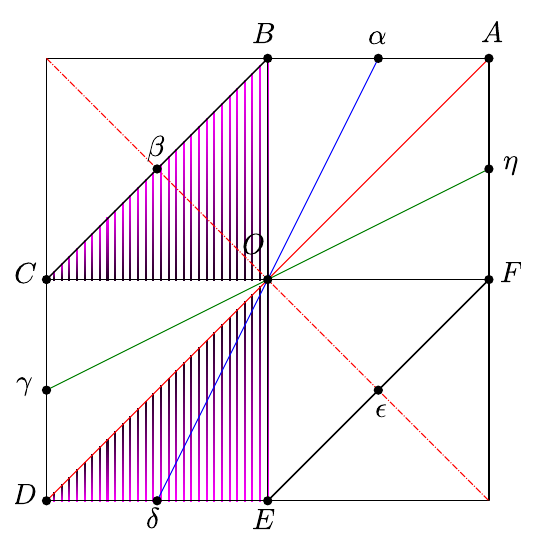}
\includegraphics[width=0.32\textwidth]{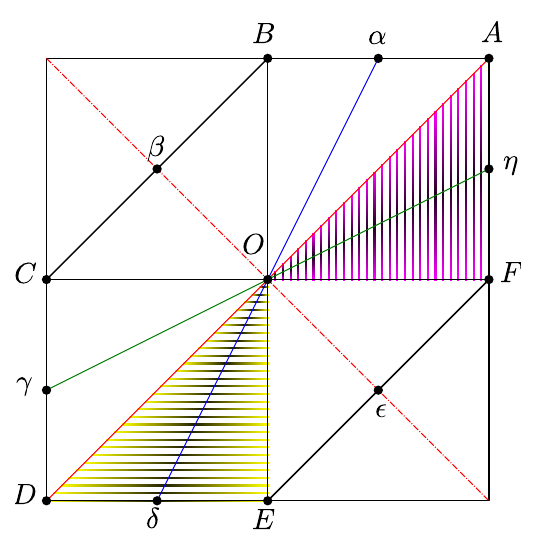}
\hfill\mbox{}
% \vspace*{-3mm}
\caption{Reflections and transformations by horizontal or vertical sliding of the right-up triangle $\triangle A\eta O$ through the successive application of $\K_1$ and $\K_2$.
Points on the matched segments are highlighted with the same color intensity.
}
\label{FigureMirrorsRIGHT}
\end{figure}

Letting $M$ vary and taking the union of segments $A\alpha$, 
we find that the transformations of~$\triangle A\alpha O$ are:
\begin{equation}\label{eqTrianglesPathTOP}
  \begin{split}
    & \triangle A\alpha O
    \xleftrightarrow{\K_1} \triangle B\alpha O 
    \xleftrightarrow{\K_2} \triangle E\epsilon 0
    % \xleftrightarrow{\K_1}  \\
    \xleftrightarrow{\K_1} 
    % & \,
    \triangle D\gamma O
    \xleftrightarrow{\K_2} \triangle C\gamma O
    \xleftrightarrow{\K_1} \triangle F\epsilon O 
    \xleftrightarrow{\K_2} \triangle A\alpha O.
  \end{split}
\end{equation}
Likewise, the cycle of transformations of $\triangle A\eta O$ is:
\begin{equation}\label{eqTrianglesPathRIGHT}
  \begin{split}
    & \triangle A\eta O
    \xleftrightarrow{\K_1} \triangle B\beta O
    \xleftrightarrow{\K_2} \triangle E\delta O
    % \xleftrightarrow{\K_1}  \\
    \xleftrightarrow{\K_1} 
    % & \,
   \triangle D\delta  O
    \xleftrightarrow{\K_2} \triangle C\beta  O
    \xleftrightarrow{\K_1} \triangle F\eta  O
    \xleftrightarrow{\K_2} \triangle A\eta  O.
  \end{split}
\end{equation}
We denote $\cH:=ABCDEF$,
and we let $\cH'$ denote the remaining part of the hexagon $\cH$ from which we remove segments $\alpha\delta$ 
and $\gamma\eta$, 
as well as the axes $BE$, and $CF$.
Thus, $\cH'$ decomposes into two sets of six triangles each, 
as outlined in sequences~\eqref{eqTrianglesPathTOP} and~\eqref{eqTrianglesPathRIGHT}.
Then, by choosing any triangle from~\eqref{eqTrianglesPathTOP}
together with any triangle from~\eqref{eqTrianglesPathRIGHT}, we obtain a set of generators for all orbits that have points in $\cH'$.
It is worth noting the self-reflection of triangles 
$\triangle BOC$ and $\triangle EOF$ with respect to the medians along the secondary diagonal, transformations that occur after the successive application of three basic operators, that~is,
\begin{equation}\label{eqTrianglesPathNQSE}
  \begin{split}
    & \triangle OB\beta
    \xleftrightarrow{\K_2\K_1\K_2} \triangle OC\beta
\ \ \ \text{and}\ \ \
\triangle OE\epsilon 
    \xleftrightarrow{\K_1\K_2\K_1} \triangle OF\epsilon.
  \end{split}
\end{equation}
As the distance $d_o$ remains constant between any two 
corresponding points of the two pairs of triangles, 
we can express this shortly as the fact that the distance $d_o$ between the pairs of triangles is $3$:
\begin{align*}
    d_o(\triangle OB\beta, \triangle OC\beta)=3
    =d_o(\triangle OE\epsilon, \triangle OF\epsilon).
\end{align*}
This is in contrast with the mirrored triangles in quadrants $\mathrm{Q}_1$ and
$\mathrm{Q}_2$, those that contain the set of representatives that generate all the orbits, and which are actually neighbors in distance $d_o$:
\begin{align*}
    d_o(\triangle OA\alpha, \triangle OB\alpha)=1
    =d_o(\triangle OA\eta, \triangle OF\eta).
\end{align*}

%%%%%%%%%%%%%%%%%%%%%%%%%%%%%%%%%%%%%%%%%%%%%%%%%%%%%%%%
\section{Arithmetic properties of orbits in 2D}\label{SectionArithmeticOn2Dorbits}

%%%%%%%%%%%%%%%%%%%%%%%%%%%%%%%%%%%%%%%%%%%%%%%%%%
\subsection{The projections of the orbits onto the unit circle.
Proof of Theorem~\ref{TheoremProjection2D}}\label{SectionProjections2D}
We group points in orbits into two categories: 
those that are diametrically opposite in the Euclidean distance and those that are not. 
We color the points thus grouped in two colors and project them onto 
the unit circle. 
As the number of orbits generated by all points in a box becomes 
larger and larger, we observe that the projections cluster and are dense
in four distinct arcs in which the circle is partitioned. 
In Figure~\ref{FigureEuclideanDiametersProjections}, patterns of colored nodes are shown 
based on their belonging or not to Euclidean diameters of their orbits.
Also, one can see there an example of the clustering of node projections on the unit circle.
Our goal in this section is to determine the length of these arcs in the limit.

The first thing to notice is that from the symmetry of the operators $\K_j$, which implies the symmetry of the points in the orbits, whose coordinates are given in~\eqref{eqOrbitPath}, 
it follows that the four arcs of the projections are symmetric in pairs with respect 
to the origin
(see Figure~\ref{FigureEuclideanDiametersProjections}), 
and therefore, it is sufficient to find the length of just one of them.

Counting the length of the paths in an orbit along the lines of coordinates,
that is, measuring with the taxicab distance, we see that each point is in this sense 
diametrically opposite to the one obtained by applying successively 
three operators $\K_j$ of which any two consecutive are distinct.
On the other hand, when the points do not coincide geometrically, 
one checks with the formulas in~\eqref{eqOrbitPath} that in only one of these pairs
the points are diametrically opposite in the Euclidean distance.
To see this explicitly, if $o(\bx)$ is the orbit containing the point $\bx=(x_1,x_2)$,
the formula for the Euclidean diameter is then: 
\begin{align}\label{eqEuclideanDiameter}
    \diam_E(o(\bx)) = \sqrt{2}\max\big\{
    \abs{x_1+x_2},\, \abs{2x_1-x_2},\, \abs{2x_2-x_1} \big\}.
\end{align}

After examining all pairs of nodes that are part of the same orbit using sequences~\eqref{eqSegmentsPathTOP}
and~\eqref{eqSegmentsPathRIGHT}, we derive that 
in the Euclidean distance 
the only nodes that belong to the diameter of an orbit
are either in $\triangle A\alpha O$, with 
the diametrically opposite point in $\triangle D\delta O$,
or in~$\triangle A \eta O$, with the diametrically opposite point in $\triangle D \gamma O$.
(All these pairs of diametrically opposite points in
the Euclidean distance are symmetric with respect to the origin.)

Now these can answer the question of a viewer placed at the origin, who, looking around, sees the lattice points colored in two colors, depending on whether they are or are not part of a diameter of a certain orbit.
Thus, an observer viewing the lattice points as projected onto the unit circle as points
$P=(\cos(2\pi\theta, \sin(2\pi\theta))$, with $\theta\in\QQ$,
will see them as originating from diametrically opposite 
points on arcs determined by the two smallest angles between 
the lines $y=x/2$ and $y=2x$, 
while on the remaining two arcs, 
as originating from orbital points that are closer to
each other than those diametrically opposite belonging to the same orbit.
In this sense, it can be said that looking around to all lattice points in~$\ZZ^2$, 
the observer from $O$ sees a point that belongs to the 
Euclidean diameter of an orbit with a probability of 
$1/4$\footnote{If $R>0$ is sufficiently large, in the square $[-R,R]^2\cap\ZZ^2$,
the regions containing the diametrically opposite points have area $2\cdot R^2/2+O(R)$,
while the total area is $4R^2$.
}
(see Figure~\ref{FigureEuclideanDiametersProjections}).
\medskip

To address the probabilistic aspects raised in 
Theorem~\ref{TheoremProjection2D},
we consider the sets of lattice points:
\begin{align*}
   \cD_E(M) &:= \left\{
    \bx\in [0,M]^2\cap\ZZ^2 :
        \begin{array}{l}
        \text{there exists $\bx'\in o(\bx)$ such that} \\
        \diam_E(o(\bx)) = d_E(\bx,\bx')
        \end{array}
        \right\}\\
\shortintertext{and}
   \overline{\cD}_E(M) &:= \left\{
    \bx\in \cH\cap\ZZ^2 : 
            \begin{array}{l}
        \text{there exists $\bx'\in o(\bx)$ such that} \\
        \diam_E(o(\bx)) = d_E(\bx,\bx')
        \end{array}
        \right\}.
\end{align*}
Then, since $\triangle A\alpha O \cup\triangle A\eta O$
can be taken as a set of representatives of the generators of any orbit that has nodes in $\cH'$, it follows that
\begin{align}\label{eqProbabilityDE}
    \# \cD_E(M) &= \area(\triangle A\alpha O)
    +\area(\triangle A\eta O) +O(M) 
    = \sdfrac{1}{2}M^2 +O(M)\\
\shortintertext{and}
    \# \overline{\cD}_E(M) &= 
    M^2 +O(M).
\end{align}
These can be reformulated by saying that the proportion
of points $\bx\in[0,M]^2\cap\ZZ^2$ that belong to the
Euclidean diameter of $o(\bx)$ is $1/2 +O(M^{-1})$
and the proportion of points $\bx\in \cH\cap\ZZ^2$
that belong to the
Euclidean diameter of $o(\bx)$ is $1/3 +O(M^{-1})$.

%%%%%%%%%%%%%%%%%%%%%%%%%%%%%%%%%%%%%%%%%%%%%%%%%%%%%%%%
\subsection{Average length of diameters and perimeters of orbits}
Considering the representatives of the orbits obtained from 
the analysis in Section~\ref{SectionRepresentatives}, 
we can calculate the average length of diameters and 
perimeters of the orbits with nodes in specific sets.
As an example, we will consider here the orbits generated by points $\bx\in  [0,M]^2\cap\ZZ^2$.

We have seen in Section~\ref{SectionRepresentatives}, 
that a good set of representatives that generates distinct orbits 
and additionally contains the points belonging to their Euclidean diameters is 
the union of the two triangles adjacent to the diagonal $y=x$.
Furthermore, since the points 
$\bx \in \triangle A\alpha O \cup\triangle A\eta O$ uniquely identify all orbits with nodes in 
$[0,M]^2\cap\ZZ^2$, the averages we will determine will have the same values even
in the version where they would be defined for all points in
the hexagon $\cH=ABCDEF$, instead of those in the square $[0,M]^2\cap\ZZ^2$.

For any positive integer $M$, we let 
$N_o(M)$ denote the number of distinct orbits with a
node in the square of side $M$ and a vertex at $O$ and by 
$\Av_{\diam_E}(M)$ the average length of the Euclidean diameter
of these orbits. Thus,
\begin{align*}
   N_o(M) := \#
   \big\{o(\bx) : \bx \in [0,M]^2\cap\ZZ^2\big\}   
\end{align*}
and
\begin{align*}
   \Av_{\diam_E}(M) := \frac{1}{N_o(M)}
   \sum_{\bx\in \triangle A\alpha O \cup\triangle A\eta O}
   \diam_E\big(o(\bx)).
\end{align*}
In the above range of summation for $\Av_{\diam_E}(M)$,
the Euclidean diameter in~\eqref{eqEuclideanDiameter}
reduces to just $\sqrt{2}(x_1+x_2)$, and because of the symmetry, 
the same result is obtained if we just average on one of the triangles. 
By choosing the lower one, for the sake of simplicity, it follows that
\begin{equation}\label{eqAverageEuclideanDiameter}
 \begin{split}
   \Av_{\diam_E}(M) &= \sdfrac{4\sqrt{2}}{M^2}
   \sum_{\bx\in \triangle A\eta O} (x_1+x_2) + O(1)\\
   &= \sdfrac{4\sqrt{2}}{M^2}
   \sum_{0\le x_1 \le M} \sum_{\frac{x_1}{2}<x_2< x_1} (x_1+x_2) + O(1)\\
   &= \sdfrac{4\sqrt{2}}{M^2}
   \sum_{x_1=0}^M \sdfrac{7}{8}x_1^2 + O(1)\\
    &= \sdfrac{7\sqrt{2}}{6} M + O(1).   
  \end{split}
\end{equation}

Then, \eqref{eqAverageEuclideanDiameter}
implies that 
the averages on $[0,M]^2\cap\ZZ^2$,
as well as on $\cH$, 
of the size of the edge of the bounding box of an orbit
$\Av_L(M)$, and of the perimeter of an orbit
$\Av_{2p}(M)$ are:
\begin{align}\label{eqAverageSizebbPerimeter}
   \Av_{L}(M) = \sdfrac{7}{6} M + O(1)
   \ \ \text{ and }\ \ 
   \Av_{2p}(M) = \sdfrac{14}{3} M + O(1).
\end{align}

%%%%%%%%%%%%%%%%%%%%%%%%%%%%%%%%%%%%%%%%%%%%%%%%%%%%%%%%
\subsection{The number of orbits of a given length}\label{SectionAughtsGraphics}
Given a fixed integer $X>0$, we want to count the number
$N_{o(\bx),X}$
of distinct orbits $o(\bx)$ with perimeter $X$.
From Proposition~\ref{PropositionOrbits2D}, we already know that 
$N_{o(\bx),X}=0$, if $4\not\mid X$.
For $X$ that is divisible by $4$, it is enough to check only on the generators $\bx$ that are 
in the first quadrant between the lines
$y=x/2$ and $y=2x$. Thus, we have
\begin{align*}
  N_{o(\bx),X}&:=\#\{ o(\bx) : \bx\in\ZZ^2, \ 2p(\bx)=X \}\\
  &\; = \#\{\bx\in\ZZ^2 : 
  x_1/2\le x_2\le 2x_1,\ 4(x_1+x_2)=X\}\\
  &\; = \#\{\bx\in\ZZ^2 : 
  3x_1/2\le X/4\le 3x_1\},
\end{align*}
because the perimeter of an orbit
generated by a point within the specified domain
equals~$4$ times the size of the edge of its bounding box.
The condition on the last line above is the same as $X/12\le x_1\le X/6$, hence we can conclude that:
\begin{align}\label{eqNumberOfOrbitsPerimeterX}
  N_{o(\bx),X} = 
  \left\lfloor\frac{X}{6}\right\rfloor
  -\left\lceil\frac{X}{12}\right\rceil +1, \ \ \text{ if } 4\mid X.
\end{align}
For example, if $X=100$, in the whole $\ZZ^2$, 
there are exactly
$N_{o(\bx),100} = 
  \left\lfloor\frac{50}{3}\right\rfloor
  -\left\lceil\frac{50}{6}\right\rceil +1
  = 16-9+1 = 8$ orbits with perimeter $100$
(three of them are shown in Figure~\ref{FigureOrbitsP100}),
while there are $9$ distinct orbits with perimeter~$96$.
%%%%%%%%%%%%%%%%%%%%%%%%%%%%%%%%%%%%%%%%%%%%%%%%%%%%%%
\begin{figure}[htb]
\centering
\hfill
\includegraphics[width=0.3\textwidth, angle=-90]{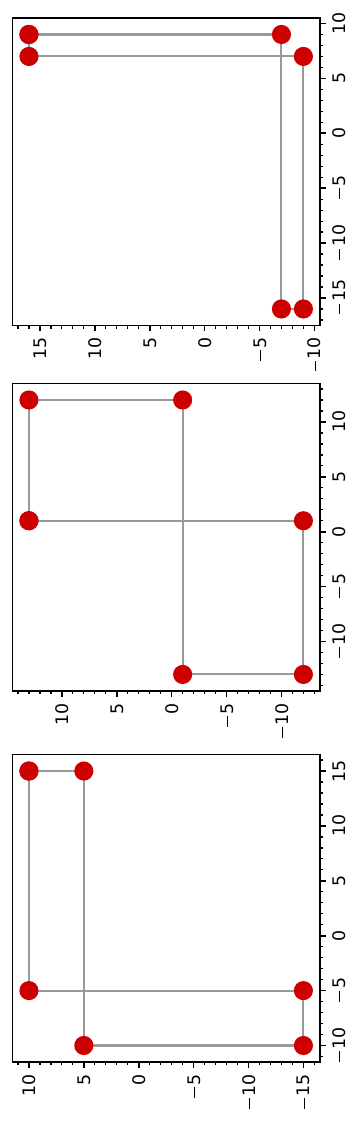}
\hfill\mbox{}
% \vspace*{-3mm}
\caption{Three orbits of length $100$.}
\label{FigureOrbitsP100}
\end{figure}

The exact formula~\eqref{eqNumberOfOrbitsPerimeterX}
can be used to find a different way of computing 
the mean Euclidean diameters of the orbits. We do that by calculating the average
on the perimeters.
First, if $T\ge 0$, then the number of orbits that have perimeter at most $T$ is:
\begin{align*}%\label{eqNumberOfALLOrbitsPerimeterT}
  N_{o(\bx),P}(T) 
  &\; = \#\{\bx\in\ZZ^2 : 2p(o(\bx)) \le T\}
   = \sum_{\substack{\bx\in\ZZ^2\\ 0\le 2p(o(\bx)) \le T}} 1.
\end{align*}
Making use of formula~\eqref{eqNumberOfOrbitsPerimeterX} for $X$ divisible by $4$
and knowing that there are no orbits with perimeter $\not\equiv 0\pmod 4$,
we have
\begin{equation}\label{eqNumberOfALLOrbitsPerimeterT}
  \begin{split}
  N_{o(\bx),P}(T) 
  & = \sum_{\substack{0\le X \le T\\ 4\mid X}} 
  \left(
 \left\lfloor\frac{X}{6}\right\rfloor
  -\left\lceil\frac{X}{12}\right\rceil +1\right)\\
  &= \sum_{0\le Y \le T/4} 
  \left(\frac Y3  +O(1)\right) = 
  \frac{1}{96}T^2 +O(T).
  \end{split}
\end{equation}
 Secondly, the total sum of the perimeters of the orbits in question is:
\begin{equation}\label{eqTotalPerimetersT}
  \begin{split}
  S_{o(\bx),P}(T) 
  & = \sum_{\substack{0\le X \le T\\ 4\mid X}} 
  X\left(
 \left\lfloor\frac{X}{6}\right\rfloor
  -\left\lceil\frac{X}{12}\right\rceil +1\right)\\
  & = \sum_{0\le Y \le T/4} \frac 43 Y
  \big( Y +O(1)\big) = 
  \frac{1}{144}T^3 +O(T^2).
  \end{split}
\end{equation}
On combining~\eqref{eqNumberOfALLOrbitsPerimeterT} and~\eqref{eqTotalPerimetersT},
it yields the average size of a perimeter of the orbits in the range~$[0,T]$:
\begin{equation}\label{eqAveragePerimetersT}
  \begin{split}
  \Av_{o(\bx),P}(T) 
  & := \frac{S_{o(\bx),P}(T)}{N_{o(\bx),P}(T)} 
  = \frac{2}{3}T +O(1).
  \end{split}
\end{equation}

%%%%%%%%%%%%%%%%%%%%%%%%%%%%%%%%%%%%%%%%%%%%%%%%%%%%%%%%
\subsection{\texorpdfstring{Another average of the perimeters of orbits.
Proof of Theorem~\ref{TheoremPerimeterOrbitsDisk}}{Another average of the perimeters of orbits.
Proof of Theorem PerimeterOrbitsDisk}}\label{SectionSqrt5}

For $R>0$, we consider the Euclidean ball
\begin{align*}
 \cB_{E}(R) := \{\bx\in\ZZ^2 : \Vert\bx\Vert_E\le R\}
\end{align*}
and the average of the lengths of the orbits with a node in $\cB_{E}(R)$ defined by
\begin{align}\label{eqAvPDisk}
 \Av_{\cB_{E},P}(R) = \frac{1}{\#\cB_{E}(R)}
 \sum_{\bx\in \cB_{E}(R)}\length(o(\bx)).
\end{align}
Unlike the types of averages we have previously addressed, 
here the orbits are counted multiple times, but not always 
by the same number of times, a phenomenon that is caused
by the mismatch between the set of representative generators of the orbits 
and lattice points in the Euclidean ball $\cB_{E}(R)$.
Based on the conclusions from Section~\ref{SectionRepresentativeOrbits} and formula~\eqref{eqSemiperemeter2D},
we will split~$\cB_{E}(R)$ into twelve domains where the perimeter of the orbits passing through them is calculated using a corresponding associated formula.
Essentially, these can be narrowed down to 
just three areas
where the perimeter of the orbits is obtained as follows:
\begin{align*}
 \mathrm{length}(o(\bx)) = 
 \begin{cases}
 4(x_1+x_2), & \text{ if } 0\le x_1\le x_2\le 2x_1;\\
 4(2x_2-x_1), &  \text{ if } 0\le 2x_1\le x_2;\\
 4(x_1-2x_2), &  \text{ if } x_2 \le -x_1\le 0.
 \end{cases}
\end{align*}
Correspondingly, we define the following circular sectors:
\begin{alignat*}{2}
 \triangle_1&:= \{\bx\in [-R,R]^2\cap\ZZ^2 : 0\le x_1\le x_2\le 2x_1\}, \quad 
 &\cS_1 :=\triangle_1\cap \cB_{E}(R),\\
 \triangle_2 &:= \{\bx\in [-R,R]^2\cap\ZZ^2 : 0\le 2x_1\le x_2 \}, \quad 
 &\cS_2 :=\triangle_2\cap \cB_{E}(R),\\
 \triangle_3 &:= \{\bx\in [-R,R]^2\cap\ZZ^2 : x_2 \le -x_1\le 0\}, \quad 
 &\cS_3 :=\triangle_3\cap \cB_{E}(R).
\end{alignat*}

Besides $\cS_1$, there are three other sectors in which analogous points 
are nodes of orbits with the same perimeters.
These are sector $\cS_1'$, the reflection of $\cS$ across the diagonal $y=x$, 
and the symmetrical sectors of $\cS_1$ and $\cS_1'$ with respect to the origin,
triangles located in quadrant $\mathrm{Q}_3$.
The same occurs with sector $\cS_2$.
That is, the analogous points in the triangle $\cS_2'$ 
reflected over the diagonal $y=x$ and in the triangles 
in quadrant $\mathrm{Q}_3$, which are symmetrical 
to $\cS_2$ and $\cS_2'$ with respect to the origin, 
belong to orbits with the same perimeter.
And furthermore, the analogous points in $\cS_3$, those
in its reflection $\cS_3'$ across the diagonal $y=-x$,
as well as the points in their symmetrical images in quadrant 
$\mathrm{Q}_2$ with respect to the origin,
are nodes of orbits with the same perimeter.

We let $\phi = \arctan 2$ denote the angle of the half line between $\triangle_1$ and $\triangle_2$.
Then, taking into account the representatives described above, it follows:
\begin{align*}
  \sum_{\mathbf{x}\in \mathcal{M}_{E}(R)}\mathrm{length}(o(\mathbf{x}))
 &=
 16\sum_{\bx\in\cS_1}(x_1+x_2) 
 +16 \sum_{\bx\in\cS_2}(2x_2-x_1)
 +16 \sum_{\bx\in\cS_3}(x_1-2x_2)
 +O(R^2).
\end{align*}
In polar coordinates, this equals
\begin{align*}
 \sum_{\mathbf{x}\in \mathcal{M}_{E}(R)}\mathrm{length}(o(\mathbf{x}))
 &=
 16\iint\limits_{[0,R]\times[\pi/4,\phi] }(\cos \theta + \sin\theta) \rho^2 d\rho d\theta\\
 &\quad +
 16\iint\limits_{[0,R]\times[\phi,\pi/2] }(2\sin\theta - \cos \theta ) \rho^2 d\rho d\theta\\
  &\quad +
 16\iint\limits_{[0,R]\times[3\pi/2,7\pi/4] }(\cos\theta - 2\sin \theta ) \rho^2 d\rho d\theta + O(R^2), 
\end{align*}
which in the end reduces to
\begin{align*}
 \sum_{\mathbf{x}\in \mathcal{M}_{E}(R)}\mathrm{length}(o(\mathbf{x}))
 &= \frac{16}{3}R^3  \big(\sqrt{2}/2+2\sin\phi+\cos\phi
\big)+ O(R^2). 
\end{align*}
On inserting this result into~\eqref{eqAvPDisk}, we obtain
\begin{align*}%\label{eqAvPDisk}
 \Av_{\cB_{E},P}(R) =  \frac{8}{3\pi}\big(\sqrt{2} + 2\sqrt{5}\big)R +O(1),
\end{align*}
which concludes the proof of Theorem~\ref{TheoremPerimeterOrbitsDisk}.

%%%%%%%%%%%%%%%%%%%%%%%%%%%%%%%%%%%%%%%%%%%%%%%%%%%%%%%%
\subsection{\texorpdfstring{Orbits with length $\equiv r\pmod d$.
Proof of Theorem~\ref{Theorem_rmodd}}{Orbits with length = r pmod d. Proof of Theorem rmodd}}

Given an integer $d\ge 2$ and a class of representatives~$r$, 
with $0\le r\le d-1$,
we color the lattice points $\bx\in\ZZ^2$ 
by the color $r$, chosen distinct for each $r$,
if the length of $o(\bx)$ is $\equiv r \pmod d$.
Our aim here is to estimate the number of lattice points of color $r$.

According to part~\ref{PropositionOrbits2DA} of Proposition~\ref{PropositionOrbits2D}, the length of an orbit is  always a multiple of $4$, therefore the result will be different  
in cases where $d$ is odd, $d$ is divisible by~$2$ but not by~$4$, 
or if~$d$ is divisible by~$4$.

For any integer $M>0$, 
on counting the orbits based on $[0,M]^2\cap\ZZ^2$,
we let $N_{r,d}(M)$ denote the number of orbits whose nodes 
are colored by color $r$, that is,
\begin{align*}
    N_{r,d}(M) :=\#\{ o(\bx) : \bx\in[0,M]^2\cap\ZZ^2, \ 2p(\bx)\equiv r\pmod d \}.
\end{align*}

%%%%%%%%%%%%%%%%%%%%%%%%%
\vspace{1mm}
% \noindent%
($i$) Case $d$ is odd.
The description of the representatives of the orbits in Section~\ref{SectionRepresentativeOrbits}, for each $1\le m\le M$
and $\bx\in [m/2,m]\times\{x_2\}$, 
implies that the orbit $o(\bx)$ has length 
% $2p\big(o(\bx)\big)$ given by
$2p\big(o(\bx)\big) = 4(x_1+x_2)$. Then
\begin{align*}
   2p\big(o(\bx)\big)  \equiv r\pmod d \ \  \text{ if and only if }\ \ 
   x_1 \equiv 4^{-1}s\pmod d,
\end{align*}
where $4^{-1}$ is the inverse of $4$ modulo $d$ and
$s:=r-4x_2\in\{0,1,2,\dots,, d-1\}$
runs on the same set of representatives, as well as $r$ does.
Since the number of solutions of the later congruence,
as $m/2\le x_1\le m$, is $(1/d)\cdot(m/2) + O(1)$ for any 
$s$, it follows that
\begin{align}\label{eqcongruentdodd}
     N_{r,d}(M) = 2\sum_{0\le m \le M}
     \left(\frac{1}{2d}m +O(1)\right) 
     = \frac{1}{2d}M^2 + O(M)\ \text{ for } r\in\{0,1,\dots,d-1\}.
\end{align}
(Here, the coefficient $2$ accounts for the other part of equal size
of the generating set of the orbits, where $x_1\ge x_2$.)
Therefore, if $d$ is odd, the nodes of the orbits are colored in~$d$ different
colors, and the colors are roughly evenly distributed among the lattice points.

%%%%%%%%%%%%%%%%%%%%%%%%%
\vspace{1mm}
% \noindent%
($ii$) Case $2 \edv d$. Let $d=2d'$, so that $2\not\mid d'$.
Then, with the same notation as in the previous case, 
$4x_1\equiv s\pmod {2d'}$ has no solution if~$s$ is odd
and has the same number of solutions as 
the congruence  $x_1\equiv 2^{-1}s'\pmod {d'}$, where 
$2^{-1}$ is the inverse of $2$ modulo $d'$ and $s':=s/2$.
This equals  $(1/d')\cdot(m/2) + O(1)$ for any 
$s$, as $m/2\le x_1\le m$.
Then, noting that~$r$ and~$s$ have the same parity, we derive
\begin{align}\label{eqcongruent2edvd}
     N_{r,d}(M) = 2\sum_{0\le m \le M}
     \left(\frac{1}{2d'}m +O(1)\right) 
     = \frac{1}{d}M^2 + O(M)\ \text{ for } r \text{ even, } 0\le r\le d-1.
\end{align}
Here again, the coefficient $2$ accounts for the generators of the orbits that are under the diagonal $y=x$.
In particular,~\eqref{eqcongruent2edvd} shows that if $d=2$, then all nodes of all orbits are colored the same, and in general,
the nodes are colored in $d/2$ different colors, uniformly distributed across the plane.

%%%%%%%%%%%%%%%%%%%%%%%%%
\vspace{1mm}
% \noindent%
($iii$) Case $4 \mid d$. Let $d=4d''$.
Then, with the same notation as before, 
\mbox{$4x_1\equiv s\pmod {4d''}$} has no solution if~$s\not\equiv 0\pmod 4$
and has the same number of solutions as 
the congruence  \mbox{$x_1\equiv s''\pmod {d''}$}, where $s'':=s/4$.
The latter congruence has \mbox{$(1/d'')\cdot(m/2)+O(1)$} solutions for
$x_1\in [m/2,m]$. Then, since $r\equiv s\pmod 4$, it follows that
\begin{equation}\label{eqcongruent4edvd}
  \begin{split}
     N_{r,d}(M) &= 2\sum_{0 \le m \le M}
     \left(\frac{1}{2d''}m +O(1)\right)\\ 
     &= \frac{2}{d}M^2 + O(M)\ \text{ for } 
          r \equiv 0\pmod 4,\  0\le r\le d-1.
  \end{split}          
\end{equation}
Thus, if $4\mid d$, each orbit is colored in one of the exactly $d/4$
possible distinct colors, and the colors are evenly distributed in the plane.
These conclude the proof of Theorem~\ref{Theorem_rmodd}.
\hfill\qed

\medskip

Figure~\ref{FigureColoredOrbits} displays several examples of color alternation effects caused by different orbit lengths resulting from the successive application of transformations $\K_1$ and $\K_2$. 
The patterns produced expand homothetically in $\ZZ^2$,
and the estimates~\eqref{eqcongruentdodd}, \eqref{eqcongruent2edvd},
and~\eqref{eqcongruent4edvd} show that the colors are spread in uniform quantities for each $d\ge 2$.
While the above estimates were made for the same set of orbit generators, 
in the representations in Figure~\ref{FigureColoredOrbits}, 
one can also see the various other shapes produced when the orbit generators come from different configurations.

% \smallskip

%%%%%%%%%%%%%%%%%%%%%%%%%%%%%%%%%%%%%%%%%%%%%%
% \bibliographystyle{plainurl}% shows urls
% \bibliography{involutiveKs}% common bib file

\end{document}